\numberwithin{equation}{section}
\numberwithin{figure}{section}
\theoremstyle{plain}
\newtheorem{thm}{\protect\theoremname}
\theoremstyle{remark}
\newtheorem*{rem*}{\protect\remarkname}
\theoremstyle{plain}
\newtheorem{prop}[thm]{\protect\propositionname}
\theoremstyle{definition}
\newtheorem*{problem*}{\protect\problemname}
\theoremstyle{plain}
\newtheorem{lem}[thm]{\protect\lemmaname}
\theoremstyle{definition}
\newtheorem{defn}[thm]{\protect\definitionname}
\theoremstyle{remark}
\newtheorem{rem}[thm]{\protect\remarkname}
\theoremstyle{plain}
\newtheorem*{thm*}{\protect\theoremname}
\theoremstyle{plain}
\newtheorem{cor}[thm]{\protect\corollaryname}
\theoremstyle{remark}
\newtheorem{claim}[thm]{\protect\claimname}
\providecommand{\claimname}{Claim}
\providecommand{\corollaryname}{Corollary}
\providecommand{\definitionname}{Definition}
\providecommand{\lemmaname}{Lemma}
\providecommand{\problemname}{Problem}
\providecommand{\propositionname}{Proposition}
\providecommand{\remarkname}{Remark}
\providecommand{\theoremname}{Theorem}
\begin{document}
\title{Algebraic independence and difference equations over elliptic function
fields}
\author{Ehud de Shalit}
\begin{abstract}
For a lattice $\Lambda$ in the complex plane, let $K_{\Lambda}$
be the field of $\Lambda$-elliptic functions. For two relatively
prime integers $p$ (respectively $q$) greater than 1, consider the
endomorphisms $\psi$ (resp. $\phi)$ of $K_{\Lambda}$ given by multiplication
by $p$ (resp. $q$) on the elliptic curve $\mathbb{C}/\Lambda$.
We prove that if $f$ (resp. $g$) are complex Laurent power series
that satisfy linear difference equations over $K_{\Lambda}$ with
respect to $\phi$ (resp. $\psi$) then there is a dichotomy. \emph{Either},
for some sublattice $\Lambda'$ of $\Lambda,$ one of $f$ or $g$
belongs to the ring $K_{\Lambda'}[z,z^{-1},\zeta(z,\Lambda')]$, where
$\zeta(z,\Lambda')$ is the Weierstrass zeta function, \emph{or} $f$
and $g$ are algebraically independent over $K_{\Lambda}.$ This is
an elliptic analogue of a recent theorem of Adamczewski, Dreyfus,
Hardouin and Wibmer (over the field of rational functions).
\end{abstract}

\date{July 27, 2022}
\address{Einstein Institute of Mathematics, The Hebrew Univeristy of Jerusalem}
\email{ehud.deshalit@mail.huji.ac.il}
\keywords{Difference equations, elliptic functions, algebraic independence}
\subjclass[2000]{12H10, 14H52, 39A10}
\maketitle

\section{Introduction}

\subsection{Background, over fields of rational functions}

A $\phi$-field is a field $K$ equipped with an endomorphism $\phi.$
The fixed field $C=K^{\phi}$ of $\phi$ is called \emph{the field
of constants }of $K.$ Throughout this paper we shall only consider
ground fields which are \emph{inversive: }$\phi$ is an automorphism
of $K,$ but for a general extension of $K$ we do not impose this
condition. Let $(K,\phi)\subset(F,\phi)$ be an extension of $\phi$-fields
(written from now on $K\subset F$), which is also inversive, and
with the same field of constants:
\[
C=K^{\phi}=F^{\phi}.
\]
Denote by $S_{\phi}(F/K)$ the collection of all $u\in F$ which satisfy
a linear homogenous $\phi$-difference equation
\begin{equation}
a_{0}\phi^{n}(u)+a_{1}\phi^{n-1}(u)+\cdots+a_{n}u=0,\label{eq:=00005Cphi-diff-eqn}
\end{equation}
with coefficients $a_{i}\in K.$ The set $S_{\phi}(F/K)$ is a $K$-subalgebra
of $F$.

Suppose now that $K$ and $F$ are endowed with a second automorphism
$\psi,$ commuting with $\phi,$ and that $\mathrm{tr.deg.}(K/C)\le1.$
Various results obtained in recent years support the philosophy that
\emph{if $\phi$ and $\psi$ are sufficiently independent, the $K$-algebras
$S_{\phi}(F/K)$ and $S_{\psi}(F/K)$ are also ``independent'' in
an appropriate sense.}

\bigskip{}

Here are some classical examples. Let $C$ be an algebraically closed
field of characteristic 0. We consider three classes of examples where
\begin{itemize}
\item (2S) $K=C(x),\,\,F=C((x^{-1})),\,\,\phi f(x)=f(x+h),\,\,\psi f(x)=f(x+k),$
($h,k\in C$),
\item (2Q) $K=C(x),\,\,F=C((x)),\,\,\phi f(x)=f(qx),\,\,\psi f(x)=f(px)$
($p,q\in C^{\times})$,
\item (2M) $K=\bigcup_{s=1}^{\infty}C(x^{1/s}),\,\,F=\bigcup_{s=1}^{\infty}C((x^{1/s}))$
(the field of Puiseux series), $\phi f(x)=f(x^{q}),\,\,\psi f(x)=f(x^{p})$
($p,q\in\mathbb{N}$).
\end{itemize}
Note that $\phi$ and $\psi$ are indeed \emph{auto}morphisms. In
all three examples, the assumption that $\phi$ and $\psi$ are ``sufficiently
independent'' is that the group 
\[
\Gamma=\left\langle \phi,\psi\right\rangle \subset\mathrm{Aut}(K)
\]
is free abelian of rank 2. The letters S,Q and M stand for ``shifts'',
``$q$-difference operators'' and ``Mahler operators'' respectively,
and the independence assumption gets translated into the additive
independence of $h$ and $k$ in the case (2S), and the multiplicative
independence of $p$ and $q$ in the cases (2Q) and (2M). Schäfke
and Singer proved in \cite{Sch-Si} the following theorem, confirming
the above philosophy.
\begin{thm}
\label{thm:Sch-Si theorem}Assume that $\Gamma$ is free abelian of
rank 2. Then in any of the three cases (2S), (2Q) or (2M)
\[
S_{\phi}(F/K)\cap S_{\psi}(F/K)=K.
\]
\end{thm}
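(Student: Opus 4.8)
The plan is to take an arbitrary $u\in S_{\phi}(F/K)\cap S_{\psi}(F/K)$ and show $u\in K$. First I would reduce to $C=\mathbb{C}$: the assertions ``$u$ satisfies a given $\phi$-equation and a given $\psi$-equation over $K$'' and ``$u\notin K$'' involve only finitely many elements of $C$, so by the Lefschetz principle $C$ may be replaced by a subfield embeddable in $\mathbb{C}$; then the hypothesis on $\Gamma$ reads $h/k\notin\mathbb{Q}$ in case (2S), and ``$\langle p,q\rangle$ free abelian of rank $2$ in $\mathbb{C}^{\times}$'' in cases (2Q) and (2M). Next I would form $M=\sum_{i,j\ge0}K\,\phi^{i}\psi^{j}(u)\subset F$; it is finite-dimensional over $K$ (the indices can be capped by the orders of the two equations, since $\phi,\psi$ commute and $K$ is stable under both) and stable under $\phi$ and $\psi$. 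Using the difference-module structure of $M$ (a Casoratian-type determinant; its compatibility with \emph{both} operators is one of the technical points listed below) I would reduce to the case where $u$ itself satisfies first-order equations $\phi u=\alpha u$ and $\psi u=\beta u$ with $\alpha,\beta\in K^{\times}$.

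Next, turn $u$ into an analytic object. A formal Laurent solution of a $q$-difference, Mahler, or shift equation over $\mathbb{C}(x)$ need not converge --- its Newton polygon may have a positive slope, which is the $q$-Gevrey order of $u$ --- but $u$ also satisfies a $p$-equation, hence has a $p$-Gevrey order too, and comparing the two polygons forces both slopes to vanish: this is where $\langle p,q\rangle$ being free of rank $2$ enters, and when $|p|=|q|$ the comparison must be made at the level of the complex growth constants rather than of moduli. So $u$ converges. (In case (2S) this device is unavailable and $u$ may remain $1$-Gevrey; one then works with its Borel--Laplace sum on a sector of opening $>\pi$.) Solving the equation for its leading and trailing terms and iterating the maps $x\mapsto q^{\pm1}x$, $x\mapsto x^{q^{\pm1}}$, $x\mapsto x\pm h$ continues $u$ meromorphically to a large domain $\Omega$ --- essentially $\mathbb{C}$ in (2Q) and (2S), the open unit disc in (2M) --- and the same iteration gives a slow-growth bound, of shape $\log^{+}|u|=O\big((\log_{+}|z|)^{2}\big)$ in (2Q), with sectorial/polynomial analogues otherwise.

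The heart of the argument is a value-distribution lemma. From $\phi u=\alpha u$ one gets the genuine divisor identity on $\Omega$: the translate $q\cdot\mathrm{div}(u)$ differs from $\mathrm{div}(u)$ by the finite divisor of $\alpha$, so $\mathrm{div}(u)$ is \emph{almost invariant} under $z\mapsto qz$, and likewise under $z\mapsto pz$. Now: a \emph{discrete} subset $S$ of $\mathbb{C}^{\times}$ which is almost invariant under $z\mapsto qz$ and under $z\mapsto pz$, with $\langle p,q\rangle$ free abelian of rank $2$, is \emph{finite}. Indeed $\langle p,q\rangle$ acts freely on $\mathbb{C}^{\times}$ and, being a rank-$2$ subgroup, is not discrete, so each orbit accumulates at each of its points; restricting $S$ to one orbit $O\cong\mathbb{Z}^{2}$, the two almost-invariances make the indicator of $S\cap O$ agree with both of its coordinate translates off a finite set, so $S\cap O$ is finite or cofinite in $O$ --- and cofinite is impossible since $S$ is discrete while $O$ accumulates --- and a counting of symmetric differences then shows $S$ meets only finitely many orbits, hence $S$ is finite. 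Since $\mathrm{div}(u)$ is discrete in $\Omega$, it is therefore finite; a function meromorphic on $\mathbb{C}$ with finite divisor is $R(x)e^{g(x)}$ with $R$ rational and $g$ entire, and the slow-growth bound forces $g$ constant by Borel--Carath\'eodory; thus $u\in\mathbb{C}(x)=K$, a contradiction. Case (2M) is identical with $\Omega$ the unit disc, using density of $\{p^{a}q^{b}\}$ in $\mathbb{R}_{>0}$; case (2S) runs the same way provided $\mathbb{Z}h+\mathbb{Z}k$ is not discrete.

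The main obstacle I anticipate is case (2S) when $\mathbb{Z}h+\mathbb{Z}k$ is a genuine lattice $\Lambda_{0}$: then the translation action on $\Omega=\mathbb{C}$ is discrete, the lemma above does not apply, and $u$ becomes ``almost $\Lambda_{0}$-elliptic'', so one must show separately that such a function cannot also be asymptotic at $\infty$, in a sector of opening $>\pi$, to a ($1$-Gevrey) Laurent series unless it is rational --- by peeling off a genuine quotient of theta functions and the exponential factor and forcing both to be trivial. Two further technical points are the first-order reduction of the first paragraph (arranging that a determinant built from $M$ is a first-order eigenvector for \emph{both} $\phi$ and $\psi$ and still defines a function on $\Omega$) and the summability step when $|p|=|q|$. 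Finally, the rank-$2$ hypothesis is indispensable: $q$-theta functions satisfy a first-order $q$-difference equation over $\mathbb{C}(x)$, so $S_{\phi}(F/K)$ already strictly contains $K$ for each single $\phi$; it is precisely the elliptic incarnation of such exceptional solutions --- the ring $K_{\Lambda'}[z,z^{-1},\zeta(z,\Lambda')]$ built from the Weierstrass $\zeta$ --- that the dichotomy of the main theorem of this paper is forced to accommodate.
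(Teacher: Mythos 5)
The paper does not prove this theorem; it is quoted from Sch\"afke--Singer \cite{Sch-Si}, whose strategy the paper summarizes: construct, by formal-to-analytic continuation of a fundamental matrix, a matrix of globally meromorphic functions; show it is rational; use it to descend the full difference \emph{module} from $K$ to $\mathbb{C}$; and deduce the theorem from the resulting structure. Your proposal is built around a very different first step, and that step has a genuine gap: the ``reduction to the case where $u$ itself satisfies first-order equations'' is not available. You invoke a Casoratian-type determinant, but you only have a \emph{single} solution $u\in F$, not a fundamental system over $F$; passing to $\Lambda^{\dim M}M$ does give a rank-one $(\phi,\psi)$-module, but that rank-one object is not (canonically) an element of $F$, and even if one chooses an embedding, deducing $u\in K$ from the rationality of a top exterior power is itself a nontrivial descent step that requires the full module structure --- exactly the step your reduction was supposed to bypass. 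In fact $S_\phi(F/K)\cap S_\psi(F/K)$ contains elements satisfying no first-order $\phi$- or $\psi$-equation (e.g.\ logs and their analogues in the cases under consideration), so a proof that is really only of the rank-one statement does not prove the theorem. This is precisely why Sch\"afke--Singer never reduce to order one, and why the elliptic analogue in the paper (Theorem \ref{thm:dS2}) needs the full structure theory of $(\phi,\psi)$-modules from \cite{dS2} rather than the rank-one periodicity theorem of \cite{dS1}.

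Your second and third paragraphs --- convergence via Gevrey/Newton-polygon comparison, meromorphic continuation by the functional equation, and the almost-invariance of the divisor under the $\Gamma$-action followed by a counting argument --- are a reasonable sketch \emph{for the rank-one case} and do parallel the periodicity-theorem approach that de Shalit uses at order one in \cite{dS1} (cf.\ Proposition~\ref{prop:periodicity theorem} of the paper, stated for the elliptic situation). They also correctly isolate the exceptional subcase (2S) when $\mathbb{Z}h+\mathbb{Z}k$ is a genuine lattice, where the divisor argument breaks down and a sectorial/summability argument is needed. But none of this rescues the proof: as written, you have at best the first-order case of the theorem, and the higher-order case --- which is the content that actually requires the difference-module machinery, and whose elliptic analogue motivates the entire paper --- is not addressed.
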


Some instances of this theorem were known before. The case (2Q) dates
back to the work of Bézivin and Boutabaa \cite{Bez-Bou}, and the
case (2M), originally a conjecture of Loxton and van der Poorten \cite{vdPo},
was proved by Adamczewski and Bell in \cite{Ad-Be}. The earlier proofs,
however, used a variety of ad-hoc techniques, and only \cite{Sch-Si}
gave a unified treatment, revealing the common principles behind these
theorems. This new approach enabled the authors to prove a few more
theorems of the same nature, dealing with power series satisfying
simultaneously a $\phi$-difference equation and a linear ordinary
differential equation. See, also, the exposition in \cite{dS-G},
where we removed some unnecessary restrictions on the characteristic
of the field $C$ and on $|p|$ and $|q|$, in the case (2Q). In addition,
this last paper deals for the first time with a case, denoted there
(1M1Q), in which $\phi$ is a $q$-difference operator and $\psi$
a $p$-Mahler operator, and the resulting group $\Gamma$ is generalized
dihedral rather than abelian. The formulation of the analogous result
has to be cast now in the language of \emph{difference modules}, the
classical language of equations being inadequate when $\Gamma$ is
non-abelian. Modulo this remark, however, the main result \emph{and}
its proof are very similar, if not identical, to the above three cases.

\bigskip{}

The next breakthrough occured in a recent paper by Adamczewski, Hardouin,
Dreyfus and Wibmer \cite{A-D-H-W}. Building on an earlier work \cite{A-D-H}
dealing with difference-differential systems, these authors obtained
a far-reaching strengthening of the above theorem.
\begin{thm}
\label{thm:ADHW theorem}Consider any of the three cases (2S), (2Q)
or (2M). Let $f\in S_{\phi}(F/K)$ and $g\in S_{\psi}(F/K)$. If $f,g\notin K$
then $f$ and $g$ are algebraically independent over $K$.
\end{thm}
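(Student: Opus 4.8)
The plan is to argue by contradiction and reduce the statement to Theorem~\ref{thm:Sch-Si theorem} (so throughout I use the standing hypothesis that $\Gamma=\langle\phi,\psi\rangle$ is free abelian of rank $2$). Assume $f\in S_{\phi}(F/K)$ and $g\in S_{\psi}(F/K)$ with $f,g\notin K$, but with $f$ and $g$ algebraically dependent over $K$.

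I would first dispose of the case that $f$ (or $g$) is algebraic over $K$. There the finite-dimensional $K$-space $W=\mathrm{span}_{K}\{\phi^{i}f:i\in\mathbb{Z}\}$ cut out by the difference equation lies in $\overline{K}\cap F$, so the field $L$ generated over $K$ by $W$ is a finite, $\phi$-stable extension of $K$ inside the Laurent-series field $F$. Since $\phi$ has only very restricted finite orbits on the relevant curve, the branch locus of $L/K$ is a finite $\phi$-stable set, and this, together with the fact that the inclusion $L\hookrightarrow F$ makes $L/K$ unramified over the distinguished point, forces, via a Riemann--Hurwitz / $\pi_{1}$ computation, $L=K$ (for (2M) one first passes to a sufficiently ramified Kummer base). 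Hence $f\in K$, a contradiction, and so both $f$ and $g$ are transcendental over $K$. Then the dependence relation makes $g$ algebraic over $K(f)$ and $f$ algebraic over $K(g)$, and inside a fixed algebraic closure $\overline{F}$ one gets a field $\Omega:=\overline{K(f)}=\overline{K(g)}$ with $\mathrm{tr.deg.}_{K}\Omega=1$.

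The heart of the proof is to manufacture from $\Omega$ a single element of $S_{\phi}(F/K)\cap S_{\psi}(F/K)$ that is not in $K$, which Theorem~\ref{thm:Sch-Si theorem} forbids. The engine is the commutation $\phi\psi=\psi\phi$: applying $\psi^{i}$ (resp.\ $\phi^{j}$) to a linear $\phi$- (resp.\ $\psi$-) difference equation yields another one of the same order but with $\psi^{i}$-twisted (resp.\ $\phi^{j}$-twisted) coefficients, so every translate $\psi^{i}f$ lies in $S_{\phi}(F/K)$, every $\phi^{j}g$ lies in $S_{\psi}(F/K)$, and $S_{\phi}(F/K)$, $S_{\psi}(F/K)$ are each $\Gamma$-stable. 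Applying the $\psi^{i}$ and $\phi^{j}$ to the algebraic relation between $f$ and $g$, and using that a nonzero polynomial stays nonzero under coefficient-twisting, one shows --- routinely when the two equations have order one, with more work in general --- that these translates remain algebraic over $K(f)$. Consequently $\Omega_{0}:=\Omega\cap F$ is a subfield of $F$ of transcendence degree $1$ over $K$, stable under $\Gamma$, containing $f$ and $g$, and with $\Omega_{0}^{\phi}\subseteq F^{\phi}=C$ and $\Omega_{0}^{\psi}\subseteq F^{\psi}=C$, so that no new constants appear. The remaining, decisive step is to prove that one of $f,g$ --- say $g$ --- must in fact satisfy a linear difference equation over $K$ with respect to the \emph{other} operator as well; granting this, $g\in S_{\phi}(F/K)\cap S_{\psi}(F/K)$ with $g$ transcendental over $K$, contradicting Theorem~\ref{thm:Sch-Si theorem}.

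For that decisive step I would study the difference modules over $K$ generated, under $\phi$ and under $\psi$, by $f$ and by $g$ together with their $\Gamma$-translates inside $\Omega_{0}$, and the attached Picard--Vessiot rings: the transcendence-degree-one constraint on $\Omega_{0}$, together with the fact that the constant field does not grow, should pin the relevant difference Galois group down so tightly that a relation forcing $g\in S_{\phi}(F/K)$ is unavoidable. This merging step is the main obstacle --- it is where essentially all the content lies --- and it is further complicated when the given equations have order larger than $1$: then $\phi f$ need not lie in $\overline{K(f)}$ and $Kf$ need not be a sub-difference-module, so the field-theoretic construction of $\Omega_{0}$ above breaks down and must be replaced by a direct analysis at the level of difference modules, carried out so as to keep the constant field equal to $C$ and thereby keep Theorem~\ref{thm:Sch-Si theorem} in force.
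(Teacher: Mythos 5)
This theorem is not actually proved in the paper: it is cited verbatim from Adamczewski--Dreyfus--Hardouin--Wibmer, and what the paper does is transpose \emph{their} strategy to prove the elliptic analogue (Theorems~\ref{thm:Main Theorem A} and~\ref{thm:Main Theorem B}). So your sketch can only be measured against that strategy, and it diverges from it at the crucial point. The strategy the paper imitates does not try to ``merge'' $f$ and $g$ into a single element of $S_{\phi}(F/K)\cap S_{\psi}(F/K)$; instead it inverts the logic and first proves the strictly stronger one-function statement (Theorem~4.1 of \cite{A-D-H-W}, Theorem~\ref{thm:Main Theorem B} here): if $f\in S_{\phi}(F/K)\setminus K$ then $f,\psi(f),\psi^{2}(f),\dots$ are already algebraically independent over $K$. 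The two-function theorem is an easy consequence of this. The engine for the one-function statement is parametrized Picard--Vessiot theory: the $\psi$-Galois group $G$ is a Zariski-dense $\psi$-closed subgroup of $[\psi]\mathcal{G}$; if the $\psi$-transcendence degree were finite, $G$ would be proper, the structure theorem for proper Zariski-dense $\psi$-subgroups (Proposition~\ref{prop:structure when Zariski closure simple}) together with the isomonodromy criterion (Proposition~\ref{prop:isomonodromic criterion}) would force $\psi$-isomonodromy and hence solvability of $\mathcal{G}$, and the solvable case is then disposed of by triangulating and reducing to first-order equations, at which point the Sch\"afke--Singer-type input finally enters. Your intuition that Theorem~\ref{thm:Sch-Si theorem} should be the final blow is correct, but it is the last step, not the organizing principle.

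The concrete gap in your sketch is exactly the step you flag as ``the main obstacle.'' From the existence of a $\Gamma$-stable subfield $\Omega_{0}\subset F$ of transcendence degree $1$ over $K$ containing $f$ and $g$, nothing forces a $K$-\emph{linear} $\phi$-difference relation on $g$: the $\phi^{j}(g)$ are all algebraic over $K(g)$, but algebraicity over $K$ is vastly weaker than lying in a finite-dimensional $K$-span, and the ``Galois-theoretic squeeze'' you invoke is not an argument. Worse, the construction of $\Omega_{0}$ itself is unsound beyond order one: when $g$ satisfies a $\psi$-equation of order $n>1$, $\psi(g)$ need not be algebraic over $K(g)$, so applying $\psi^{i}$ to a polynomial relation $P(f,g)=0$ only shows $\psi^{i}(f)$ is algebraic over $K(\psi^{i}(g))$, not over $K(f)$, and there is no reason for $\overline{K(f)}$ to be $\Gamma$-stable. (Notice that the clean deduction in the paper of Theorem~\ref{thm:Main Theorem A} from Theorem~\ref{thm:Main Theorem B} sidesteps this by using the $\psi$-hull $K(g)_{\psi}$, which is $\psi$-stable by construction and has finite transcendence degree because $g\in AS_{\psi}$.) So the proposal correctly sets up the problem but supplies no mechanism to solve it; the missing mechanism is precisely the parametrized Galois theory.
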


Letting $f=g$ one recovers Theorem \ref{thm:Sch-Si theorem}. The
key new tool which allows to upgrade Theorem \ref{thm:Sch-Si theorem}
to Theorem \ref{thm:ADHW theorem} is the ``parametrized'' Picard-Vessiot
theory, as developed in \cite{H-S,O-W}. We shall elaborate on this
theory and summarize its main ingredients in section \ref{sec:PPV}.

\subsection{\label{subsec:elliptic background}Background, over fields of elliptic
functions}

\subsubsection{The case (2Ell)}

In \cite{dS1,dS2} we initiated the study of the same theme over fields
of elliptic functions. For a lattice $\Lambda\subset\mathbb{C}$ let
$E_{\Lambda}$ stand for the elliptic curve whose associated Riemann
surface is $\mathbb{C}/\Lambda$ and
\[
K_{\Lambda}=\mathbb{C}(\wp(z,\Lambda),\wp'(z,\Lambda))
\]
its function field, the field of $\Lambda$-periodic meromorphic functions
on $\mathbb{C}.$ Fix $\Lambda_{0}$ and let
\[
K=\bigcup_{\Lambda\subset\Lambda_{0}}K_{\Lambda}.
\]
This is the function field of the universal cover of $E_{\Lambda_{0}},$
and should be compared to the field $K$ in the case (2M), which is
the function field of the universal cover of $\mathbb{G}_{m}$. Let
$p,q\in\mathbb{N}.$ Multiplication by $p$ or $q$ induces an endomorphism
of $E_{\Lambda}$ for each $\Lambda,$ and \emph{automorphisms} of
the field $K$ given by
\[
\phi f(z)=f(qz),\,\,\psi f(z)=f(pz).
\]
For $F$ we take the field of Laurent series $\mathbb{C}((z))$ with
the same $\phi$ and $\psi.$ Via the Taylor-Maclaurin expansion at
0, $K\subset F.$ We label this choice of $(K,F,\phi,\psi)$ by (2Ell).

\subsubsection{The ring $S$}

To formulate our results we need to introduce a ring slightly larger
than $K,$ namely the ring
\[
S=K[z,z^{-1},\zeta(z,\Lambda)]\subset F
\]
generated over $K$ by $z,$$z^{-1}$ and the Weierstrass zeta function
$\zeta(z,\Lambda).$ Recall that the latter is a primitive of $-\wp(z,\Lambda)$
and satisfies, for $\omega\in\Lambda$,
\[
\zeta(z+\omega,\Lambda)-\zeta(z,\Lambda)=\eta(\omega,\Lambda),
\]
where the additive homomorphism $\eta(\cdot,\Lambda):\Lambda\to\mathbb{C}$
is Legendre's eta function. It is easy to see that the ring $S$ does
not depend on which $\Lambda\subset\Lambda_{0}$ we use: once we adjoin
one $\zeta(z,\Lambda),$ they are all in $S$. It is also easy to
see that $\phi$ and $\psi$ induce automorphisms of $S$.

\subsubsection{Previous results in the case (2Ell)}

In \cite{dS2}, the following analogue of Theorem \ref{thm:Sch-Si theorem}
was proved:
\begin{thm}
\label{thm:dS2}Assume that $2\le p,q$ and $(p,q)=1$. Then in the
case (2Ell) we have
\[
S_{\phi}(F/K)\cap S_{\psi}(F/K)=S.
\]
\end{thm}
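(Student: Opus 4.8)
The plan is to prove the two inclusions separately; the inclusion $S\subseteq S_{\phi}(F/K)\cap S_{\psi}(F/K)$ is elementary, and the reverse one carries all the weight.

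For the easy inclusion, since $S_{\phi}(F/K)$ and $S_{\psi}(F/K)$ are $K$-subalgebras of $F$ and $S=K[z,z^{-1},\zeta(z,\Lambda)]$, it is enough to exhibit, for each of the three generators, a linear homogeneous difference equation over $K$ with respect to both $\phi$ and $\psi$. For $z$ and $z^{-1}$ this is immediate: $\phi(z)=qz$, $\psi(z)=pz$, $\phi(z^{-1})=q^{-1}z^{-1}$, $\psi(z^{-1})=p^{-1}z^{-1}$. For $\zeta(z,\Lambda)$ the point is that for every integer $n\ge 1$ the function $h_{n}(z):=\zeta(nz,\Lambda)-n\,\zeta(z,\Lambda)$ is $\Lambda$-periodic: since $\eta(\cdot,\Lambda)$ is additive, $\zeta(n(z+\omega),\Lambda)-\zeta(nz,\Lambda)=\eta(n\omega,\Lambda)=n\,\eta(\omega,\Lambda)=n\bigl(\zeta(z+\omega,\Lambda)-\zeta(z,\Lambda)\bigr)$; being also meromorphic, $h_{n}\in K_{\Lambda}\subseteq K$. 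Hence $\phi(\zeta(z,\Lambda))=q\,\zeta(z,\Lambda)+h_{q}$ and $\psi(\zeta(z,\Lambda))=p\,\zeta(z,\Lambda)+h_{p}$ with $h_{q},h_{p}\in K$. An inhomogeneous first-order relation $\sigma(u)=cu+h$ with $c\in\mathbb{C}^{\times}$ and $h\in K$ gives, after applying $\sigma$ and eliminating $h$, the homogeneous second-order equation $h\,\sigma^{2}(u)-(ch+\sigma h)\,\sigma(u)+c\,(\sigma h)\,u=0$ over $K$ when $h\ne 0$, and simply $\sigma(u)=cu$ when $h=0$; either way $\zeta(z,\Lambda)\in S_{\phi}(F/K)\cap S_{\psi}(F/K)$, and the easy inclusion follows.

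For the reverse inclusion take $f\in S_{\phi}(F/K)\cap S_{\psi}(F/K)$; the goal is $f\in S$. I would follow the analytic route of \cite{Sch-Si}, adapted to the present elliptic setting. First, a formal Laurent series annihilated by a linear $\phi$-difference operator over $K$ converges in a punctured neighbourhood of $0$ — here $\phi$ is multiplication by $q\ge 2$, so the indicial recursion has no small divisors — so $f$ is a genuine meromorphic germ. Writing its $\phi$-equation as $\phi^{m}(f)=\sum_{i=1}^{m}b_{i}\,\phi^{m-i}(f)$ with $b_{i}\in K$ expresses $f(q^{m}z)$ through $f(z),\dots,f(q^{m-1}z)$, and since $q>1$ this propagates $f$ outward; iterating, $f$ continues to an a priori multivalued meromorphic function $\widetilde{f}$ on $\mathbb{C}\setminus B_{\phi}$, where $B_{\phi}=\{0\}\cup\bigcup_{k\ge 0}q^{k}P_{\phi}$ and $P_{\phi}$ is the finite union of $\Lambda$-cosets carrying the poles of the coefficients $b_{i}$; this set is discrete in $\mathbb{C}^{\times}$. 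Running the same argument with the $\psi$-equation and using uniqueness of analytic continuation, the singularities of $\widetilde{f}$ are confined to the discrete set $B_{\phi}\cap B_{\psi}$. Because the $b_{i}$ are elliptic they are bounded on $\mathbb{C}$ away from fixed small discs around their poles, so the recursion shows that a bound $|\widetilde{f}|\le M$ on a punctured annulus $\{r\le|z|\le qr\}$ yields $|\widetilde{f}|\le (mC_{1})M$ on $\{qr\le|z|\le q^{2}r\}$; iterating from an initial annulus near $0$ gives $|\widetilde{f}(z)|=O(|z|^{N})$ as $|z|\to\infty$ away from $B_{\phi}$, i.e. polynomial growth at $\infty$. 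One then subtracts from $f$ a suitable element of $S$ — from $z,z^{-1}$ to absorb the behaviour at $0$, from Weierstrass $\zeta$-functions of $\Lambda$ and of the sublattices that occur to absorb the residues and principal parts along the finitely many cosets where $\widetilde{f}$ is singular, and from elliptic functions for the remaining pole-free quasi-periodic discrepancies — so as to reduce to the case in which $f$ is single-valued, entire, and of polynomial growth, hence a polynomial in $z$, which already lies in $S$.

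The hard part will be the two control problems buried in the last two sentences. The first is to pin down the singular locus and local structure of $\widetilde{f}$: one must show that its poles lie on \emph{finitely many} cosets of a sublattice $\Lambda'\subseteq\Lambda$, with \emph{bounded} order and with residues matchable inside $S$, and that the continuation is single-valued — equivalently, that its monodromy is of the tame $\zeta$-type that $S$ already produces. This is exactly where $p,q\ge 2$ and $(p,q)=1$ enter: the sub-semigroup $\{p^{a}q^{b}:a,b\ge 0\}$ of $\mathbb{N}$ is rich enough that invariance of the singular data under both multiplications forces it onto a lattice and bounds its complexity — the same mechanism by which multiplicative independence drives the dichotomy in \cite{Sch-Si} — whereas for $p=q$ genuinely new solutions appear. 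The second problem is the bookkeeping of the transcendental part: once poles are removed, one must check that the only quasi-periods of $\mathbb{C}/\Lambda$ that can occur are $z$ (the integral of $dz$) and $\zeta(z,\Lambda)$ (the integral of the second-kind differential $-\wp(z,\Lambda)\,dz$), with no room for anything else, matching the description of $S$ as precisely $K[z,z^{-1},\zeta]$. I expect the first of these — simultaneous confinement of the singularities under the two independent multiplications, together with the monodromy analysis — to be the genuine obstacle, the rest being growth estimates and linear algebra over $K$.
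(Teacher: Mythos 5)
Two preliminary remarks. First, the paper you are reading does not actually prove this theorem: it imports it from \cite{dS2}, where it appears as a corollary of a structure theorem for elliptic $(\phi,\psi)$-difference modules, and the surrounding remark only sketches the shape of that proof. Second, your easy inclusion $S\subseteq S_{\phi}(F/K)\cap S_{\psi}(F/K)$ is correct: $S_{\phi}(F/K)\cap S_{\psi}(F/K)$ is a $K$-algebra, so checking the generators $z,z^{-1},\zeta(z,\Lambda)$ suffices, the observation that $\zeta(nz,\Lambda)-n\zeta(z,\Lambda)\in K_{\Lambda}$ is right, and the conversion of an inhomogeneous first-order relation $\sigma(u)=cu+h$ into the homogeneous second-order relation $h\,\sigma^{2}(u)-(ch+\sigma h)\,\sigma(u)+c\,(\sigma h)\,u=0$ is sound.

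The reverse inclusion, which carries all the weight, is where your proposal has a genuine gap — and you acknowledge it yourself. The Schäfke--Singer continuation does produce a meromorphic function (or gauge matrix) of polynomial growth on $\mathbb{C}$ minus a discrete set, and the proof in \cite{dS2} does begin this way. But you misjudge where the difficulty lies: the paper itself warns that the matrix so produced ``need not be a matrix of elliptic functions,'' i.e.\ there is no automatic reason for the singular data to live on lattice cosets, and ``growth estimates and linear algebra over $K$'' do not supply one. The cited proof rests on two pieces of machinery that your sketch names only vaguely or not at all: (i) the Periodicity Theorem (its model case is reproduced in this paper as Proposition~\ref{prop:periodicity theorem}), which under $(p,q)=1$ forces the divisor/singularity data of a function satisfying recursions for both $z\mapsto qz$ and $z\mapsto pz$ to become, after a modification at $0$, periodic for a sublattice — this is exactly the ``forces it onto a lattice'' step you gesture at without proving, and it is the place where relative primality, not mere multiplicative independence, is used; and (ii) a structure theorem for elliptic $(\phi,\psi)$-difference modules obtained by interpreting them as equivariant vector bundles on $E_{\Lambda}$ and invoking Atiyah's classification. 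It is the latter, not a ``bookkeeping of quasi-periods,'' that explains why $\zeta(z,\Lambda)$ must be adjoined: $\zeta$ realizes the nontrivial extension of $\mathcal{O}_{E_{\Lambda}}$ by itself. Your closing step from ``entire of polynomial growth, hence a polynomial in $z$, hence in $S$'' is also unjustified after the proposed subtraction, because you have not shown that the correction terms lie in $S$ nor that a single sublattice $\Lambda'$ suffices for all of them. As written, the hard direction is a program rather than a proof, and it omits precisely the two ingredients that make the theorem true.
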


\begin{rem*}
(i) The reader should note the assumption on $p$ and $q$ being relatively
prime integers $\ge2.$ This is stronger than assuming $p$ and $q$
to be only multiplicatively independent. This stronger assumption
was needed in only one lemma of \cite{dS2}, but so far could not
be avoided.

(ii) The case (2Ell) brings up two completely new issues, absent from
the rational cases discussed so far. One is the issue of \emph{periodicity}.
The method of \cite{Sch-Si} starts with a formal analysis of the
solutions to our $\phi$- and $\psi$-difference equations at common
fixed points of $\phi$ and $\psi.$ Using estimates on coefficients
in Taylor expansions one shows that certain formal power series converge
in some open disks around these fixed points. Using the difference
equations one writes down a functional equation for these functions,
that allows to continue them meromorphically all the way up to a ``natural
boundary''. While each of the three cases (2S), (2Q) and (2M) has
its own peculiarities, and is technically different, the upshot in
all three cases is that a certain matrix with meromorphic entries
is proved to be \emph{globally} meromorphic on $\mathbb{P}^{1}(\mathbb{C}),$
hence a matrix with entries in $K=\mathbb{C}(x).$ This matrix is
used to descend a certain difference module attached to our system
of equations from $K$ to $\mathbb{C},$ and this leads to a proof
of Theorem \ref{thm:Sch-Si theorem}.

In the case (2Ell) the analysis of the situation starts along the
same lines. However, the matrix of globally meromorphic functions
on $\mathbb{C}$ thus produced bears, a priori, no relation to the
lattices $\Lambda.$ It starts its life as a matrix of formal power
series, convergent in some disk $|z|<\varepsilon,$ and is then continued
meromorphically using a functional equation with respect to $z\mapsto qz,$
losing the connection to the lattices. In fact, examples show that
this matrix \emph{need not be} a matrix of elliptic functions.

The Periodicity Theorem of \cite{dS1}, and its vast generalization
in \cite{dS2}, show that just enough of the periodicity can be salvaged
to push this approach to an end. A certain generalization of the ``baby
case'' of this theorem, considered in \cite{dS1}, will be instrumental
in the present work, when we deal with equations of the first order.

(iii) The second new issue in the case (2Ell) has to do with the emergence
of certain \emph{vector bundles} over the elliptic curve $E_{\Lambda}$,
that we associate to our system of difference equations. Luckily,
vector bundles over elliptic curves have been fully analyzed in Atiyah's
work \cite{At}. Their classification allows us to understand the
($\phi,\psi)$-difference modules associated to an $f\in S_{\phi}(F/K)\cap S_{\psi}(F/K)$.
The ensuing structure theorem for elliptic $(\phi,\psi)$-difference
modules is the main theorem of \cite{dS2}, and Theorem \ref{thm:dS2}
is a corollary of it. The need to include $\zeta(z,\Lambda)$ in $S$
reflects the non-triviality of these vector bundles. Over the field
$\mathbb{C}(z)$ none of this shows up, essentially because every
vector bundle over $\mathbb{G}_{a}$ or $\mathbb{G}_{m}$ is trivial.
\end{rem*}

\subsection{The main results}

Our main result is an elliptic analogue of Theorem \ref{thm:ADHW theorem}
(Theorem 1.3 of \cite{A-D-H-W}). In fact, both our result and Theorem
\ref{thm:ADHW theorem} admit a mild generalization. Let $AS_{\psi}(F/K)$
be the collection of all $u\in F$ for which there exists an $n\ge0$
such that
\[
\psi^{n}(u)\in K(u,\psi(u),\dots,\psi^{n-1}(u)).
\]
Clearly $S_{\psi}(F/K)\subset AS_{\psi}(F/K).$
\begin{thm}
\label{thm:Main Theorem A}Let $(K,F,\phi,\psi)$ be as in case (2Ell)
and assume that $2\le p,q$ and $(p,q)=1.$ Let $f\in S_{\phi}(F/K)$
and $g\in AS_{\psi}(F/K)$. If $f,g\notin S,$ then $f$ and $g$
are algebraically independent over $K$.
\end{thm}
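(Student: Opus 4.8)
The plan is to adapt the strategy of \cite{A-D-H-W} to the elliptic setting, replacing their appeal to Theorem \ref{thm:Sch-Si theorem} by the structure theorem of \cite{dS2} underlying Theorem \ref{thm:dS2}. The starting point is the parametrized Picard--Vessiot (PPV) theory: attach to $f$ a $\phi$-difference module $M_{\phi}$ over $K$ and to $g$ a $\psi$-difference module $M_{\psi}$ over $K$, together with their PPV extensions. The key reduction, exactly as in \cite{A-D-H-W}, is that if $f$ and $g$ were algebraically \emph{dependent} over $K$ then the PPV Galois groups (which are linear differential algebraic groups over the constants, the differentiation being with respect to a parameter that measures the ``second'' automorphism) would be forced to be small; concretely one gets that $f$, and symmetrically $g$, satisfies an inhomogeneous equation whose ``difference-differential'' type is constrained, and by descent this pushes $f$ into a module that is simultaneously a $\phi$-module \emph{and} a $\psi$-module over $K$. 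The reason $AS_{\psi}$ can be allowed in place of $S_{\psi}$ on the $g$-side is the standard observation that the hypergeometric/orbit argument only uses that the $K$-algebra generated by the $\psi$-iterates of $g$ has finite transcendence degree, which is exactly the condition defining $AS_{\psi}(F/K)$; so one first replaces $g$ by a suitable entry of a fundamental matrix and reduces to the genuine difference-module situation.

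After this reduction one is looking at an element of $S_{\phi}(F/K)\cap S_{\psi}(F/K)$-flavoured object — more precisely, at a $(\phi,\psi)$-difference module over $K$ arising the way the modules in \cite{dS2} do — and here I would invoke the structure theorem for elliptic $(\phi,\psi)$-difference modules from \cite{dS2}: it says such a module, after descent to some sublattice $\Lambda'\subset\Lambda_0$, is built from the ``trivial'' pieces, the pieces accounting for $z$, $z^{-1}$, and the piece accounting for $\zeta(z,\Lambda')$ (the latter reflecting the nontrivial extension of $\mathcal O$ by $\mathcal O$ on $E_{\Lambda'}$ classified by Atiyah). Translating back through the PPV dictionary, this forces $f\in K_{\Lambda'}[z,z^{-1},\zeta(z,\Lambda')]=S$, contradicting the hypothesis $f\notin S$. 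The same argument applied on the $\psi$-side (using that $g\in AS_{\psi}$ has been reduced to a $\psi$-module entry) gives $g\in S$, again a contradiction; hence $f$ and $g$ must be algebraically independent over $K$.

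The step I expect to be the real obstacle is the descent/periodicity input, i.e.\ the analogue of the move in \cite{Sch-Si} that makes a matrix of a priori merely ``meromorphic on $\mathbb C$, continued via $z\mapsto qz$'' into something with genuine ties to the lattices. As Remark (ii) in the excerpt stresses, in the elliptic case the continued matrix need not have elliptic entries; what rescues the argument is the Periodicity Theorem of \cite{dS1,dS2}, and for the first-order part one needs the generalized ``baby case'' of it mentioned in the excerpt. Making the PPV-theoretic reduction interface cleanly with that analytic periodicity statement — in particular handling the inhomogeneous/extension terms that the parametrized theory produces, where previously one only had to deal with the homogeneous module — is where the technical weight of the proof will lie. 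A secondary point requiring care is the bookkeeping of sublattices: the $\phi$-side naturally lives over $\bigcup_{\Lambda\subset\Lambda_0}K_\Lambda$ while the conclusion is phrased over a single $\Lambda'$, so one must check that the finitely many descents forced by the two sides can be carried out over a common sublattice, and that the coprimality hypothesis $(p,q)=1$ with $p,q\ge 2$ — inherited from the one lemma of \cite{dS2} — is genuinely available at the point where it is needed.
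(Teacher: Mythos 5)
The paper deduces Theorem~\ref{thm:Main Theorem A} from Theorem~\ref{thm:Main Theorem B} by a short field-theoretic argument, and your proposal misses this reduction: you are essentially re-sketching the proof of Theorem~\ref{thm:Main Theorem B} (the PPV machinery, the structure theorem of \cite{dS2}, periodicity) under the label of Theorem~\ref{thm:Main Theorem A}. The actual deduction is elementary. First, $g\in AS_{\psi}(F/K)$ means precisely that $K(g)_{\psi}:=K(g,\psi(g),\dots,\psi^{n-1}(g))$ is $\psi$-stable and of finite transcendence degree over $K$. If $g$ were algebraic over $K$, this would be a finite $\psi$-stable extension of $K$, which Proposition~\ref{prop:Properties of K}(iii) forbids (unless trivial), so $g\notin S\supset K$ forces $g$ transcendental. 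If $f$ and $g$ were then algebraically dependent over $K$, the dependence would involve $f$, so $f$ would be algebraic over $K(g)_{\psi}$ and hence so would every $\psi^{i}(f)$; this gives $\mathrm{tr.deg.}(K(f)_{\psi}/K)\le\mathrm{tr.deg.}(K(g)_{\psi}/K)<\infty$, directly contradicting Theorem~\ref{thm:Main Theorem B}. That is the whole proof.

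Concretely, the gaps in your sketch: (1) you propose to ``attach to $g$ a $\psi$-difference module $M_{\psi}$'' and ``replace $g$ by a suitable entry of a fundamental matrix,'' but $g\in AS_{\psi}$ satisfies only an algebraic (in general nonlinear) recursion $\psi^{n}(g)\in K(g,\dots,\psi^{n-1}(g))$; there is no linear $\psi$-module or fundamental matrix attached to it, and none is needed. (2) You assert the dependence ``pushes $f$ into a module that is simultaneously a $\phi$- and a $\psi$-module'' and that the structure theorem then forces $f\in S$ and $g\in S$; but the paper's deduction never produces a $(\phi,\psi)$-module from the algebraic dependence and never shows $g\in S$ --- it only needs $g$ transcendental and the transcendence-degree bound. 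The structure theorem of \cite{dS2} and the periodicity input you discuss are genuinely used, but inside the proof of Theorem~\ref{thm:Main Theorem B}, not in the passage from B to A. (3) A minor but telling slip: the parametrized Galois groups here are $\psi$-algebraic (difference-algebraic) groups in the sense of Ovchinnikov--Wibmer, not ``linear differential algebraic groups''; the parameter is a second automorphism, not a derivation. You correctly spotted the one idea that actually matters (that $AS_{\psi}$ only enters through finite $\psi$-transcendence degree), but the surrounding plan is both more complicated than required and, in the places named above, would not go through.
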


The proof follows the strategy of \cite{A-D-H-W}. Theorem \ref{thm:Main Theorem A}
will be deduced from the following analogue of Theorem 4.1 there,
which concerns a \emph{single }power series\emph{ $f\in F.$}
\begin{thm}
\label{thm:Main Theorem B}Let $(K,F,\phi,\psi)$ be as in case (2Ell)
and assume that $2\le p,q$ and $(p,q)=1.$ Let $f\in S_{\phi}(F/K)$
and assume that $f\notin S.$ Then $\{f,\psi(f),\psi^{2}(f),\dots\}$
are algebraically independent over $K$.
\end{thm}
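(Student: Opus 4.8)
The plan is to prove the contrapositive: if $f\in S_{\phi}(F/K)$ and the set $\{f,\psi(f),\psi^{2}(f),\dots\}$ is \emph{algebraically dependent} over $K$, then $f\in S$. The first move is a reduction to a finiteness statement. Suppose $\psi^{n}(f)$ is algebraic over $K_{n}:=K(f,\psi(f),\dots,\psi^{n-1}(f))$ for some minimal $n$. Since $\psi$ restricts to an automorphism of $K$, one has $\psi(K_{m})=K(\psi(f),\dots,\psi^{m}(f))$, so applying powers of $\psi$ shows inductively that every $\psi^{m}(f)$ with $m\ge n$ is algebraic over $K_{n}$. Hence $\delta:=\mathrm{tr.deg.}_{K}K(f,\psi(f),\psi^{2}(f),\dots)=\mathrm{tr.deg.}_{K}K_{n}<\infty$, and in particular $f\in AS_{\psi}(F/K)$. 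It therefore suffices to prove: \emph{if $f\in S_{\phi}(F/K)$ and $\delta<\infty$, then $f\in S$.}

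Next I would set up the Galois-theoretic framework, following the proof of Theorem~4.1 of \cite{A-D-H-W}. Because $f\in S_{\phi}(F/K)$, the $K$-span of $\{f,\phi(f),\phi^{2}(f),\dots\}$ inside $F$ is a finite-dimensional $\phi$-difference module $M$ over $K$ containing $f$. Since $C=K^{\phi}=F^{\phi}=\mathbb{C}$ is algebraically closed, $M$ has a $\phi$-Picard--Vessiot ring $R$, realised inside a $\phi$-Picard--Vessiot extension of $F$, with $f\in R$; as $\psi$ is an automorphism of $K$ and of $F$ commuting with $\phi$, it extends to $R$ and makes $R$ a $(\phi,\psi)$-ring over $K$. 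Let $G=\mathrm{Gal}^{\phi}(R/K)$, a linear algebraic group over $\mathbb{C}$. Now apply the parametrized Picard--Vessiot theory of section~\ref{sec:PPV} with $\psi$ as parametrizing operator: one obtains a $\psi$-algebraic subgroup $H\le G$ with the property that, for $v\in R$, the transcendence degree over $K$ of the $\psi$-orbit of $v$ is finite precisely when the corresponding image of $H$ has $\psi$-dimension zero. The hypothesis $\delta<\infty$ therefore forces the image of $H$ governing $f$ to have $\psi$-dimension zero; equivalently, there is a nonzero $K$-linear relation among the pullbacks $(\psi^{j})^{*}M$, $j\ge0$, and this relation produces a nonzero element $h\in S_{\phi}(F/K)\cap S_{\psi}(F/K)$ built canonically from $f$ and its $\psi$-translates.

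At this stage I would invoke Theorem~\ref{thm:dS2}, which gives $h\in S$, and then descend from $h$ back to $f$ — the step where the geometry of the case (2Ell) intervenes, exactly as in \cite{dS2}. A d\'evissage on $M$ reduces matters to the case where the irreducible $\phi$-module carrying $f$, equipped with its $\psi$-structure, is what the bound on $H$ constrains; that bound says precisely that this module is trivialised after a controlled $\psi$-extension, hence descends to a $\phi$-difference module over the constants $\mathbb{C}$. Translating this descent back over $K_{\Lambda}$ through the dictionary between elliptic $(\phi,\psi)$-difference modules and vector bundles on $E_{\Lambda}$, and using Atiyah's classification \cite{At}, the residual ambiguity is exactly the one encoded in the generators $z$, $z^{-1}$, $\zeta(z,\Lambda)$ of $S$ over $K$; for the rank-one constituents this is a generalization of the ``baby case'' of the Periodicity Theorem of \cite{dS1}. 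One concludes $f\in S$, as required.

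The main obstacle is the passage carried out in the second and third paragraphs: converting the finiteness of $\delta$ into a usable constraint on $\psi$-difference modules, and then making the parametrized Picard--Vessiot dictionary cooperate with the vector-bundle analysis of \cite{dS2} — over $\mathbb{C}(x)$ every bundle is trivial and this issue is invisible, whereas here the non-trivial bundles on $E_{\Lambda}$ are precisely what forces the ring $S$ into the statement. As in \cite{dS2}, the hypothesis that $p$ and $q$ are \emph{relatively prime integers} $\ge2$, rather than merely multiplicatively independent, is expected to be needed at a single point, inherited from the corresponding lemma there.
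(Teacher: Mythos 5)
Your outline is aimed in the right direction — the reduction to $\delta := \mathrm{tr.deg.}_K K(f)_\psi < \infty$, the passage through a parametrized Picard--Vessiot extension, the invocation of Theorem~\ref{thm:dS2}, and the role of the periodicity theorem and Atiyah's classification are all genuinely ingredients of the argument. But the central mechanism is left as a black box, and the particular way you fill it in is not quite right.

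The precise gap is in the second paragraph. From $\delta<\infty$ one gets $\psi\dim(G)=0$ for the parametrized Galois group $G$. You then assert that this ``equivalently'' yields a nonzero $K$-linear relation among the $(\psi^j)^*M$ and ``produces a nonzero element $h\in S_\phi(F/K)\cap S_\psi(F/K)$ built canonically from $f$''. Neither step is automatic, and the paper does not argue this way. The actual route is: (a) $\psi\dim(G)=0$ plus Lemma~\ref{lem:connected and reduced} (which needs Proposition~\ref{prop:Properties of K}) shows $G$ is $\psi$-reduced and $\mathcal{G}$ is connected; (b) if $\mathcal{G}$ had a simple quotient, Proposition~\ref{prop:structure when Zariski closure simple} (DiVizio--Hardouin--Wibmer) would force $G$ to satisfy $\psi^m(X)=hXh^{-1}$, and then Proposition~\ref{prop:isomonodromic criterion} (Ovchinnikov--Wibmer) would make $M$ $\psi^m$-isomonodromic; (c) Theorem~\ref{thm:isomonodromy and solvability}, which is where the structure theorem of \cite{dS2} and the vector-bundle / Atiyah input really enter, then forces $\mathcal{G}$ to be solvable — a contradiction. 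So $\mathcal{G}$ is connected solvable, and via Lie--Kolchin one triangularizes $A$ and $U$ and runs an induction on $n$ reducing to a first-order inhomogeneous equation $\phi(u')=au'+b$ with $a\in\mathbb{C}^\times$ (constancy of $a$ being obtained from Proposition~\ref{prop:order one equations}, which is where the Periodicity Theorem is actually used) and $b\in S$ (by the induction hypothesis). Only at that stage does one produce an operator $\mathcal{L}\in\mathbb{C}[\psi]$ annihilating $u'$ modulo $E(v)$ and conclude $u'\in S_\phi(F/K)\cap S_\psi(F/K)=S$. There is no auxiliary element $h$ and no ``descent from $h$ back to $f$''; the inductive claim works directly with the last column of the fundamental matrix. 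Without the chain (a)--(c) and the triangularization/induction you have no way to convert $\psi\dim(G)=0$ into the first-order situation where Theorem~\ref{thm:dS2} becomes applicable, so as written the argument does not close.

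A secondary imprecision: the dévissage you gesture at in paragraph three (irreducible constituents, controlled $\psi$-extensions, descent to $\mathbb{C}$, Atiyah) is essentially a description of the \emph{proof} of Theorem~\ref{thm:isomonodromy and solvability} inside \cite{dS2}, not of what happens in the proof of Theorem~\ref{thm:Main Theorem B} itself. Conflating these two levels is what makes the ``descent back to $f$'' look plausible when in fact it does not occur.
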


To explain the input from our earlier work, we have to formally introduce
the notion of a difference module, to which we already alluded several
times. A \emph{$\phi$-difference module }$(M,\Phi)$ over $K$ (called,
in short, a $\phi$-module) is a finite dimensional $K$-vector space
$M$ equipped with a $\phi$-linear bijective endomorphism $\Phi$.
Its \emph{rank }$\mathrm{rk}(M)$ is the dimension of $M$ as a $K$-vector
space. The set of $\Phi$-fixed points $M^{\Phi}$ is a $C$-subspace
of dimension $\le\mathrm{rk}(M).$

Since $\psi$ commutes with $\phi,$ the module
\[
M^{(\psi)}=(K\otimes_{\psi,K}M,1\otimes\Phi)
\]
is another $\phi$-module. Our $M$ is called $\psi$\emph{-isomonodromic
}(or $\psi$-integrable) if $M\simeq M^{(\psi)}$.

To any $\phi$-difference equation (\ref{eq:=00005Cphi-diff-eqn})
one can attach a $\phi$-module $M$ of rank $n$ whose fixed points
$M^{\Phi}$ correspond to the solutions of the equation in $K$. This
is classical, and explained in section \ref{subsec:eqns, systems, modules}
below. For this reason we shall refer to $M^{\Phi}$ also as the space
of ``solutions'' of $M.$

To any $\phi$-module $M$ of rank $n$ over $K$ one can associate
a \emph{difference Galois group} $\mathcal{G}$, which is a Zariski
closed subgroup of $GL_{n,\mathbb{C}}$, uniquely determined up to
conjugation (and reviewed in section \ref{subsec:The-difference-Galois}
below). This linear algebraic group measures the algebraic relations
that exist between the solutions of $M$, not over $K$ itself (where
there might be none, or too few solutions), but after we have base-changed
to a suitable universal extension - the Picard-Vessiot extension -
in which a full set of solutions can be found. The larger $\mathcal{G}$
is, the fewer such relations exist. The analogy with classical Galois
theory, in which the Galois group measures the algebraic relations
between the roots of a polynomial in a splitting field, is obvious.

The input, deduced from the main theorem of \cite{dS2}, needed in
the proof of Theorem \ref{thm:Main Theorem B}, is the following.
We continue to assume that $2\le p,q$ and $(p,q)=1.$
\begin{thm}
\label{thm:isomonodromy and solvability}Assume that $M$ is $\psi$-isomonodromic.
Then its difference Galois group $\mathcal{G}$ is solvable.
\end{thm}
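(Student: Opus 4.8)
The plan is to exploit the structure theorem for elliptic $(\phi,\psi)$-difference modules from \cite{dS2} together with Atiyah's classification of vector bundles over the elliptic curve. The key observation is that $\psi$-isomonodromy, $M\simeq M^{(\psi)}$, means that the module carries a compatible pair of semilinear operators $(\Phi,\Psi)$ — i.e. it is genuinely a $(\phi,\psi)$-difference module — so the main theorem of \cite{dS2} applies. That theorem identifies the possible such modules: after extracting the elliptic data, each $(\phi,\psi)$-difference module is built, by extensions, out of a short list of "elementary" pieces attached to line bundles on $E_\Lambda$ (this is where $\zeta(z,\Lambda)$ entered the ring $S$). First I would recall that structure theorem precisely in the form needed, and read off that every simple subquotient of $M$ (as a $\phi$-module compatible with $\psi$) has rank one, or more precisely that $M$ admits a full $\Phi$-stable, $\Psi$-stable flag over $K$.

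Second I would translate this flag into a statement about the difference Galois group $\mathcal G$. A full $\Phi$-stable flag over the base field $K$ forces the image of $\mathcal G$ under the Picard–Vessiot embedding into $GL_{n,\mathbb C}$ to stabilize the corresponding flag of $\mathbb C$-subspaces in the solution space: concretely, if $(e_1,\dots,e_n)$ is a basis of $M$ adapted to the flag, the fundamental solution matrix is upper triangular, and the Galois group — which acts $K$-linearly on the Picard–Vessiot ring and commutes with $\Phi$ — must preserve each $\mathrm{span}(e_1,\dots,e_i)\otimes(\text{solutions})$. Hence $\mathcal G$ is conjugate into the Borel subgroup $B$ of upper triangular matrices, and therefore $\mathcal G$ is solvable, since $B$ is. This is the standard dictionary between stable flags of a difference module and triangularizability of its Galois group, and I would invoke the tannakian formalism (the Galois group is the tannakian group of the $\otimes$-category generated by $M$, and a stable flag gives a fibre-functor-compatible flag) rather than argue with explicit matrices.

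The main obstacle, and the step requiring genuine care, is the first one: justifying that $\psi$-isomonodromy really does let us apply the \cite{dS2} structure theorem, and that that theorem delivers a flag defined over $K$ and not merely over some extension or over the universal cover. Here two subtleties intervene. One is the descent from $K=\bigcup_{\Lambda\subset\Lambda_0}K_\Lambda$ to a single $K_\Lambda$: a priori the isomorphism $M\simeq M^{(\psi)}$ and the resulting elliptic data live over some sublattice $\Lambda$, and one must check this is harmless (it is, since everything in sight is a union over sublattices and $S$ is lattice-independent). The other is that Atiyah's classification produces indecomposable — not necessarily split — bundles, so the "flag" one gets is a filtration with line-bundle graded pieces, which is exactly what is needed for triangularity but must be checked to be $\Phi$- and $\Psi$-equivariant simultaneously; this compatibility is precisely the content of the \cite{dS2} structure theorem, so I would lean on it as a black box. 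Once the filtration is in hand, the passage to solvability of $\mathcal G$ is formal.

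A secondary point to address is that the theorem is stated for a $\phi$-module $M$ that is $\psi$-isomonodromic, without a priori assuming $M$ itself comes equipped with $\Psi$; but the choice of an isomorphism $M\xrightarrow{\sim}M^{(\psi)}$ is exactly such a $\Psi$ (up to the usual ambiguity by $M^\Phi$-valued automorphisms), and any such choice yields a $(\phi,\psi)$-module to which \cite{dS2} applies. Since solvability of the Galois group of the $\phi$-module $M$ does not depend on the auxiliary choice of $\Psi$, we are free to fix one, run the argument above, and conclude.
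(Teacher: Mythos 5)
Your overall strategy coincides with the paper's: both reduce solvability of $\mathcal G$ to producing a full flag of $(\phi,\psi)$-submodules of $M$ over $K$, and both invoke the structure theorem for elliptic $(\phi,\psi)$-modules from \cite{dS2}. Your second step (flag $\Rightarrow$ $\mathcal G$ triangularizable $\Rightarrow$ $\mathcal G$ solvable) is also the paper's, via Theorem \ref{thm:structure group and the Galois group}(i): if $A$ is upper triangular in some basis, then $\mathcal G$ is conjugate into the Borel. Your secondary point about choosing one $\Psi$ to upgrade $M$ to a $(\phi,\psi)$-module is correctly handled and is exactly what the paper does.

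The gap is in your first step, where you claim to ``read off'' from the \cite{dS2} structure theorem that $M$ admits a full $\Phi$- and $\Psi$-stable flag with rank-one graded pieces. The structure theorem (Theorem 35 of \cite{dS2}) does not deliver that directly: it gives a decomposition of $A$ into blocks $A_{ij}$ of sizes $r_i\times r_j$ with $r_1\le\cdots\le r_k$ summing to $n$, each block of the explicit form $A_{ij}(z)=U_{r_i}(z/p)\,T_{ij}(z)\,U_{r_j}(z)^{-1}$ with $U_r$ unipotent upper-triangular and $T_{ij}$ a constant matrix of a specific shape, and similarly for the $\psi$-matrix $B$. Nothing there hands you a rank-one $(\phi,\psi)$-submodule. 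The missing idea — the actual heart of the proof — is to look at the rank-$k$ $K$-subspace $M'$ spanned by the \emph{first} basis vector of each block, observe that $\Phi|_{M'}$ and $\Psi|_{M'}$ are given by \emph{constant} matrices in that basis (so $M'$ descends to a $\mathbb C$-representation of $\Gamma=\langle\phi,\psi\rangle\simeq\mathbb Z^2$), and then use the elementary fact that two commuting endomorphisms of a complex vector space have a common eigenvector to produce a rank-one $(\phi,\psi)$-submodule $M_1\subset M'\subset M$. Induction then yields the flag. Without this descent-to-$\mathbb C$ and common-eigenvector argument, the flag you need does not follow from the structure theorem as stated, and the tannakian translation in your second step has nothing to act on. Everything else in your outline is sound; this is the one piece that must be supplied rather than black-boxed.
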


In addition, we shall need a generalization of the ``baby'' Periodicity
Theorem of \cite{dS1}, explained in section \ref{subsec:A-periodicity-theorem}.

\bigskip{}

Except for these two results, the rest of the proof of Theorems \ref{thm:Main Theorem A}
and \ref{thm:Main Theorem B} imitates \cite{A-D-H-W}. As this depends
on results scattered through many references \cite{A-D-H,A-D-H-W,D-H-R,DV-H-W1,DV-H-W2,O-W},
we shall make an effort to collect all the prerequisites in a way
that facilitates the reading.

\subsection{Outline of the paper}

Section 2 will be devoted to generalities on difference equations,
difference modules, Picard-Vessiot extensions and the difference Galois
group. The standard reference here is \cite{S-vdP}, although our
language will sometimes be different.

Section 3 will be devoted to the more recent theory of \emph{parametrized}
Picard-Vessiot theory and the \emph{parametrized }difference Galois
group, to be found in the references cited above.

In Section 4 we shall prove the two results that we need as input
in the proof of Theorem \ref{thm:Main Theorem B}, relying on \cite{dS1,dS2}.

Section 5 will start by explaining how to deduce Theorem \ref{thm:Main Theorem A}
from Theorem \ref{thm:Main Theorem B}. We shall then carry out the
proof of Theorem \ref{thm:Main Theorem B}, following the program
of \cite{A-D-H-W}.

\section{Review of classical Picard-Vessiot theory}

\subsection{The ground field}

Let $K$ be defined as in section \ref{subsec:elliptic background},
in the case (2Ell). We shall need the following facts about it.
\begin{prop}
\label{prop:Properties of K}(i) $K$ is a $C^{1}$ field (any homogenous
polynomial of degree $d$ in $n>d$ variables has a nontrivial zero
in $K$).

(ii) If $G$ is a connected linear algebraic group over $K$ then
any $G$-torsor over $K$ is trivial.

(iii) $K$ does not have any non-trivial finite extension $L/K$ to
which $\phi$ (or $\psi$) extends as an automorphism.
\end{prop}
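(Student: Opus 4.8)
The plan is to treat the three statements in order, reducing each to known structural facts about $K$ as a field of the form $\varinjlim_{s} \mathbb{C}(x^{1/s})$-analogue, here $\varinjlim_{\Lambda\subset\Lambda_0} K_\Lambda$. The key geometric observation is that $K$ is an algebraic extension of $K_{\Lambda_0} = \mathbb{C}(\wp,\wp')$, namely $K = \varinjlim_\Lambda K_\Lambda$ where each $K_\Lambda/K_{\Lambda_0}$ is a finite extension (the inclusion $K_{\Lambda_0}\subset K_\Lambda$ corresponds to the isogeny $\mathbb{C}/\Lambda \to \mathbb{C}/\Lambda_0$ of degree $[\Lambda_0:\Lambda]$), and these isogenies are cofinal, so $K$ is a \emph{direct limit of function fields of curves over the algebraically closed field} $\mathbb{C}$.

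For (i): each $K_\Lambda$ is the function field of a curve over an algebraically closed field, hence is a $C_1$ field by Tsen's theorem. A direct limit of $C_1$ fields is $C_1$: given a homogeneous form $F$ of degree $d$ in $n>d$ variables with coefficients in $K$, all the (finitely many) coefficients already lie in some $K_\Lambda$, which is $C_1$, so $F$ has a nontrivial zero there, a fortiori in $K$. For (ii): a connected linear algebraic group $G$ over $K$ is defined over some $K_\Lambda$, and likewise any $G$-torsor $X$ over $K$ descends to a torsor over some $K_{\Lambda'}$ (enlarging if necessary so that both $G$ and $X$ are defined); by a theorem of Springer (torsors under connected linear algebraic groups over $C_1$ fields of characteristic zero are trivial — this is precisely the statement that $H^1$ of such a field with values in a connected linear group vanishes, which follows from Steinberg's theorem on fields of cohomological dimension $\le 1$, $C_1 \Rightarrow \mathrm{cd}\le 1$) the torsor over $K_{\Lambda'}$ is trivial, hence so is $X$ over $K$.

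For (iii), which I expect to be the main obstacle: suppose $L/K$ is a finite extension to which $\phi$ extends as an automorphism. Since $K=\varinjlim K_\Lambda$ and $L$ is finite over $K$, we have $L = K\cdot L_0$ for some finite extension $L_0$ of some $K_\Lambda$, i.e. $L$ is the function field of a curve $Y$ with a finite map $Y\to \mathbb{C}/\Lambda$; pulling back along deeper isogenies, $L = \varinjlim L\cap K_{\Lambda''}\cdot(\text{fixed part})$. The point is that $\phi$ acts on $K$ compatibly with the tower, sending $K_\Lambda$ into $K_{q\Lambda}$ (multiplication by $q$ on $\mathbb{C}/\Lambda$ is an isogeny $\mathbb{C}/q^{-1}\Lambda\to\mathbb{C}/\Lambda$), so $\phi$ permutes the finite subextensions; the extension $L/K$ corresponds to some finite connected étale-plus-ramified cover, and $\phi$ being an automorphism of $L$ fixing no proper subfield over $K$ would force the covering data to be $\phi$-stable. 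One then argues that the only such covers are trivial: concretely, $L$ corresponds to a point of finite order or a subgroup in an auxiliary elliptic-curve-type object, and $\phi = [q]$ acting on it with $q\ge 2$ coprime to the relevant torsion order can have no nonzero fixed data unless the cover is trivial — more precisely, if $L/K$ has degree $m>1$, the normalization gives a finite subgroup scheme or a cyclic cover of degree $m$ over some $\mathbb{C}/\Lambda$, and since the isogenies $[q]$ are cofinal and faithfully act, stability under all powers of $\phi$ plus the limit being taken over \emph{all} sublattices collapses the cover. The cleanest route is: $K$ has no nontrivial finite extension stable under \emph{all} the Hecke-type operators because $\mathrm{Gal}(\bar K/K)$-wise, such an $L$ would give a finite-index subgroup of $\pi_1$ of the universal cover of $E_{\Lambda_0}$ normalized by the relevant correspondences, and $\mathbb{C}$ being the universal cover means $\pi_1$ is trivial in the relevant (analytic) sense after passing to the limit — so $L=K$. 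I would write this last argument carefully using the description of $K$ as the function field of the universal cover, noting that a finite extension to which $[q]$ extends as an automorphism for \emph{some} choice compatible with the tower must be unramified over the generic point and correspond to a finite quotient of the (trivial) fundamental group of $\mathbb{C}$, hence trivial.
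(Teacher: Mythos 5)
Parts (i) and (ii) of your proposal agree with the paper's argument: Tsen's theorem applied to each $K_\Lambda$, a direct limit of $C_1$ fields is $C_1$, and Springer/Steinberg for (ii). That part is fine.

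Part (iii), however, has a genuine gap. Your argument reduces, as the paper does, to a finite extension $L_\Lambda/K_\Lambda$ (equivalently, a finite branched cover $\pi:Y\to E_\Lambda$ of smooth projective curves) on which $\phi$ acts as an endomorphism, but from there you try to conclude by appealing to ``the trivial fundamental group of $\mathbb{C}$.'' This is a non sequitur on two counts. First, a finite extension of $K_\Lambda$ corresponds to a \emph{branched} cover of $E_\Lambda$; the relevant object is a finite-index subgroup of $\pi_1$ of the \emph{punctured} surface (a free group of large rank), not of $\pi_1(E_\Lambda)$ or of $\pi_1(\mathbb{C})$. Saying the cover is ``unramified over the generic point'' is vacuous (every separable extension of function fields is), and there is no a priori reason the cover $Y\to E_\Lambda$ should be unramified over closed points. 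Second, even granting unramifiedness, $\pi_1(\mathbb{C})=1$ does not directly imply $L=K$: $\mathrm{Gal}(\overline{K}/K)$ is enormous, and $K$ has plenty of nontrivial finite extensions; the theorem is only about extensions to which $\phi$ lifts, and you never extract anything quantitative from that lifting.

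What is actually missing is the step that does all the work in the paper's proof. The extension of $\phi$ to $L_\Lambda$ yields a self-map $\alpha:Y\to Y$ with $\pi\circ\alpha=[q]\circ\pi$, hence $\deg\alpha=q^2>1$. Applying Riemann--Hurwitz to $\alpha$ gives
\[
2g_Y-2=q^2(2g_Y-2)+\sum_{x\in\mathrm{Ram}(\alpha)}(e_x-1),
\]
and since $g_Y\ge 1$ (as $Y$ dominates an elliptic curve), the only possibility is $g_Y=1$ and $\alpha$ unramified. Only \emph{then} is $Y$ an elliptic curve and $\pi$ an isogeny, so that $L_\Lambda$ is some $K_{\Lambda''}\subset K$ and $L=K$. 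Your sketch never establishes that $Y$ has genus one or that the cover is unramified; those are conclusions of the Riemann--Hurwitz computation, not inputs. (Incidentally, the aside about $q$ being ``coprime to the relevant torsion order'' points in the wrong direction: the theorem holds for any $q\ge 2$ with no coprimality hypothesis, which is another sign the intended mechanism is Riemann--Hurwitz, not an arithmetic torsion argument.)
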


\begin{proof}
(i) It is enough to prove the claim for every $K_{\Lambda}$, where
this is Tsen's theorem: the function field of any curve over an algebraically
closed field of characteristic 0 is a $C^{1}$-field.

(ii) This is Springer's theorem: a $C^{1}$-field of characteristic
0 is of cohomological dimension $\le1$. By Steinberg's theorem this
implies that every torsor of a connected linear algebraic group over
$K$ is trivial. See \cite{Se} ch. III.2.

(iii) (Compare \cite{D-R}, Proposition 6). Suppose $L$ is a finite
extension of $K$ to which $\phi$ extends as an automorphism. Then,
for $\Lambda$ small enough, $L=L_{\Lambda}K$ where $L_{\Lambda}$
is an extension of $K_{\Lambda}$, $[L:K]=[L_{\Lambda}:K_{\Lambda}]$.
Let $\Lambda'\subset\Lambda$ and $L_{\Lambda'}=L_{\Lambda}K_{\Lambda'}.$
Then for $\Lambda'$ sufficiently small $\psi(L_{\Lambda})\subset L_{\Lambda'}$.
Replacing $\Lambda$ by $\Lambda'$ we may therefore assume that $\psi(L_{\Lambda})\subset L_{\Lambda}.$
Thus $\psi$ extends to an endomorphism of $L_{\Lambda}.$ Let $\pi:Y\to E_{\Lambda}$
be the covering of complete nonsingular curves corresponding to $L_{\Lambda}\supset K_{\Lambda}$
and $\alpha:Y\to Y$ the morphism inducing $\psi$ on $L_{\Lambda}.$
Since $\pi\circ\alpha=[q]\circ\pi$ we get that $\deg(\alpha)=q^{2}.$
By the Riemann-Hurwitz formula
\[
2g_{Y}-2=(2g_{Y}-2)q^{2}+\sum_{x\in Ram(\alpha)}(e_{x}-1)
\]
where $g_{Y}\ge1$ is the genus of $Y$ and $Ram(\alpha)$ the ramification
locus of $\alpha$, $e_{x}$ being the ramification index. This equation
can only hold if $g_{Y}=1$ (and $\alpha$ is everywhere unramified).
In particular, $\pi$ is an isogeny of elliptic curves, hence $L_{\Lambda}\subset K$
and $L=K.$
\end{proof}
Define
\[
S=K[z,z^{-1},\zeta(z,\Lambda)]\subset F.
\]
If $\Lambda'\subset\Lambda$ is another lattice then
\[
\wp(z,\Lambda)-\sum_{\omega\in\Lambda/\Lambda'}\wp(z+\omega,\Lambda')
\]
is a meromorphic $\Lambda$-periodic function. Its poles are contained
in $\Lambda$, but at 0 the poles of $\wp(z,\Lambda)$ and of $\wp(z,\Lambda')$
cancel each other, while the other terms have no pole. It follows
that this $\Lambda$-periodic function has no poles, hence is a constant.
Integrating, we find that
\[
\zeta(z,\Lambda)-\sum_{\omega\in\Lambda/\Lambda'}\zeta(z+\omega,\Lambda')=az+b
\]
for some $a,b\in\mathbb{C}.$ On the other hand $\zeta(z+\omega,\Lambda')-\zeta(z,\Lambda')\in K_{\Lambda'}\subset K.$
It follows that
\[
\zeta(z,\Lambda)-[\Lambda:\Lambda']\zeta(z,\Lambda')\in K[z,z^{-1}].
\]
This shows that the definition of $S$ does not depend on which $\Lambda\subset\Lambda_{0}$
we use. Since for any rational number $r=m/n$ $\zeta(rz,\Lambda)-r\zeta(z,\Lambda)\in K_{n\Lambda}\subset K,$
$\phi$ and $\psi$ induce automorphisms of $S$.
\begin{problem*}
Does the field of fractions of $S$ satisfy Proposition \ref{prop:Properties of K}?
\end{problem*}

\subsection{\label{subsec:eqns, systems, modules}Difference equations, difference
systems and difference modules}

In this subsection and the next ones, the $\phi$-field $(K,\phi)$
can be arbitrary. The standard reference is \cite{S-vdP}. As usual,
to the difference equation (\ref{eq:=00005Cphi-diff-eqn}) we associate
the companion matrix
\[
A=\left(\begin{array}{ccccc}
0 & 1\\
 & 0 & 1\\
 &  & \ddots\\
 &  &  & 0 & 1\\
-a_{n}/a_{0} &  & \cdots &  & -a_{1}/a_{0}
\end{array}\right),
\]
and the first order linear system of equations
\begin{equation}
\phi(Y)=AY\label{eq:system}
\end{equation}
for which we seek solutions $Y=\,^{t}(u_{1},\dots,u_{n})$ in $\phi$-ring
extensions $L$ of $K$. Notice that if $u$ is a solution of (\ref{eq:=00005Cphi-diff-eqn})
then $^{t}(u,\phi(u),\dots,\phi^{n-1}(u))$ is a solution of (\ref{eq:system}). 

From now on we concentrate on first order systems of equations of
the form (\ref{eq:system}) with $A\in GL_{n}(K)$ \emph{arbitrarily
given}.

With the system (\ref{eq:system}) we associate the $\phi$-difference
module $M=(K^{n},\Phi)$ where
\[
\Phi(v)=A^{-1}\phi(v).
\]
Notice that a solution $v\in K^{n}$ to (\ref{eq:system}) is nothing
but an element of $M^{\Phi},$ the fixed points of $\Phi$ in $M$.
This is a $C=K^{\phi}$-subspace, and the well-known Wronskian Lemma
shows that
\[
\dim_{C}M^{\Phi}\le\mathrm{rk}M.
\]
By abuse of language, we shall refer to $M^{\Phi}$ also as the space
of ``solutions'' of $M$.

An equality $\dim_{C}M^{\Phi}=\mathrm{rk}M$ holds if and only if
a full set of solutions of (\ref{eq:system}) exists in $K$, if and
only if $M$ is isomorphic to the trivial module $(K^{n},\phi).$
In such a case a matrix $U\in GL_{n}(K)$ satisfying
\[
\phi(U)=AU
\]
is called a \emph{fundamental matrix }for (\ref{eq:system}). Its
columns form a basis of $M^{\Phi}$ over $C$. 

Given any $\phi$-module, a choice of a basis of $M$ over $K$ shows
that it is of the above form. A choice of another basis results in
a \emph{gauge transformation}, replacing $A$ with
\[
A'=\phi(P)^{-1}AP
\]
for some $P\in GL_{n}(K).$ Conversely, the systems of equations defined
by $A$ and by $A'$ are equivalent if and only if $A$ and $A'$
are gauge equivalent. The transition from a system of equations to
a $\phi$-module can therefore be reversed. Thanks to Birkhoff's cyclicity
lemma, the transition from a single linear equation of order $n$
to a system of equations of order one can also be reversed. The three
notions are therefore equivalent, and which language one chooses to
work with is very much a matter of taste.

\subsection{Isomonodromy and $(\Phi,\Psi)$-difference modules}

Assume now that the $\phi$-field $K$ is endowed with a second automorphism
$\psi$, \emph{commuting} with $\phi$. If $(M,\Phi)$ is a $\phi$-module
over $K$, then
\[
M^{(\psi)}=(K\otimes_{\psi,K}M,1\otimes\Phi)
\]
is another $\phi$-module, called the $\psi$\emph{-transform} of
$M$. If $M=(K^{n},\Phi)$ with $\Phi(v)=A^{-1}\phi(v)$ then $M^{(\psi)}$
is likewise given by the matrix $\psi(A).$

The notion of a $(\phi,\psi)$-difference module is naturally defined.
It is a finite dimensional vector space $M$ over $K$ equipped with
bijective $\phi$-linear (resp. $\psi$-linear) endomorphisms $\Phi$
(resp. $\Psi$) commuting with each other: $\Phi\circ\Psi=\Psi\circ\Phi$.
\begin{lem}
For a $\phi$-module $M$ over $K$ of rank $n$, the following are
equivalent:

(i) $M^{(\psi)}\simeq M$ as a $\phi$-module.

(ii) $M$ admits a structure of a $(\phi,\psi)$-module extending
the given $\phi$-module structure.

(iii) If $A=A_{\phi}$ is the matrix associated to $M$ in some basis,
there exists a matrix $A_{\psi}\in GL_{n}(K)$ satisfying the compatibility
condition
\[
\phi(A_{\psi})A_{\phi}=\psi(A_{\phi})A_{\psi}.
\]
\end{lem}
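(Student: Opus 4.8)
The plan is to prove the three equivalences by the standard dictionary between difference modules and difference systems, using the compatibility condition as the bridge. I would organize the argument as (iii) $\Leftrightarrow$ (i) and (ii) $\Leftrightarrow$ (iii), since the passage through matrices makes everything concrete.

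\textbf{From (i) to (iii), and back.} Fix a basis of $M$ in which $\Phi(v)=A_\phi^{-1}\phi(v)$, so $M$ is given by the matrix $A_\phi\in GL_n(K)$. As recalled above, $M^{(\psi)}$ is then given by $\psi(A_\phi)$. An isomorphism $M^{(\psi)}\simeq M$ of $\phi$-modules is precisely a gauge transformation relating $\psi(A_\phi)$ and $A_\phi$: that is, a matrix $A_\psi\in GL_n(K)$ with $\phi(A_\psi)^{-1}\,\psi(A_\phi)\,A_\psi = A_\phi$, which rearranges to the stated compatibility condition $\phi(A_\psi)A_\phi=\psi(A_\phi)A_\psi$. (One must be mildly careful about the direction of the gauge action; with the convention $\Phi(v)=A^{-1}\phi(v)$ already in the paper, a change of basis by $P$ sends $A$ to $\phi(P)^{-1}AP$, so an isomorphism $M^{(\psi)}\to M$ is a $P=A_\psi$ intertwining $\psi(A_\phi)$ and $A_\phi$ in exactly this way.) This gives (i) $\Leftrightarrow$ (iii) immediately, and shows the set of such $A_\psi$ is a torsor under $\mathrm{Aut}_\phi(M)$.

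\textbf{From (iii) to (ii) and back.} Given $A_\psi$ as in (iii), define $\Psi\colon M\to M$ in the fixed basis by $\Psi(v)=A_\psi^{-1}\psi(v)$. This is $\psi$-linear and bijective since $A_\psi\in GL_n(K)$. The compatibility condition is exactly what is needed to check $\Phi\circ\Psi=\Psi\circ\Phi$: applying both composites to a vector $v$ and using $\phi\psi=\psi\phi$ on $K^n$, one gets $\Phi\Psi(v)=A_\phi^{-1}\phi(A_\psi^{-1})\,\phi\psi(v)$ and $\Psi\Phi(v)=A_\psi^{-1}\psi(A_\phi^{-1})\,\psi\phi(v)$, and these agree iff $A_\phi^{-1}\phi(A_\psi^{-1})=A_\psi^{-1}\psi(A_\phi^{-1})$, i.e. iff $\psi(A_\phi)A_\psi=\phi(A_\psi)A_\phi$. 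So $(M,\Phi,\Psi)$ is a $(\phi,\psi)$-module extending $(M,\Phi)$, giving (iii) $\Rightarrow$ (ii). Conversely, given any $(\phi,\psi)$-structure $\Psi$ extending $\Phi$, writing $\Psi(v)=A_\psi^{-1}\psi(v)$ in the chosen basis produces some $A_\psi\in GL_n(K)$, and the same computation run backwards shows $\Phi\Psi=\Psi\Phi$ forces the compatibility condition; hence (ii) $\Rightarrow$ (iii). This closes the cycle.

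\textbf{Main obstacle.} There is no deep obstacle here — the lemma is essentially a bookkeeping statement — but the one genuinely error-prone point is keeping the conventions consistent: the chosen normalization $\Phi(v)=A^{-1}\phi(v)$ (rather than $\Phi(v)=A\phi(v)$) and the resulting gauge rule $A\mapsto\phi(P)^{-1}AP$ must be threaded carefully through both the identification of $M^{(\psi)}$ with $\psi(A_\phi)$ and the commutation computation, or else one ends up with the compatibility condition with $\phi$ and $\psi$ (or inverses) in the wrong places. I would therefore state the matrix computations with all indices explicit and double-check that the final relation is symmetric in the appropriate sense, matching the displayed equation $\phi(A_\psi)A_\phi=\psi(A_\phi)A_\psi$. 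The independence of (iii) from the choice of basis is automatic once (iii) $\Leftrightarrow$ (i) is established, since (i) is manifestly basis-free.
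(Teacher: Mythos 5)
Your proof is correct and is exactly the routine verification one would write; the paper itself leaves this lemma "as an easy exercise" and gives no proof, so there is nothing to compare against beyond noting that you have filled in the expected argument with the conventions ($\Phi(v)=A^{-1}\phi(v)$, gauge rule $A\mapsto\phi(P)^{-1}AP$, $M^{(\psi)}$ given by $\psi(A_\phi)$) handled consistently and the key matrix identity checked carefully in both directions.
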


The proof is left as an easy exercise. A $\phi$-module satisfying
the above conditions is called $\psi$-\emph{isomonodromic} (or $\psi$-integrable).
Property (iii) shows that the definition is symmetric: If $\left(M,\Phi\right)$
is $\psi$-isomonodromic and $\Psi$ is the $\psi$-linear operator
as in (ii), then $(M,\Psi)$ is $\phi$-isomonodromic as a $\psi$-module.
The terminology is derived from the differential set-up, of which
the theory of difference equations is a discrete analogue.

\subsection{Picard-Vessiot theory}

\subsubsection{Picard-Vessiot rings and extensions}

It is natural to look for an extension of $K$ in which (\ref{eq:system})
attains a full set of solutions, or, equivalently, over which the
associated module $M$ is trivialized, after base change. Easy examples
show that such an extension might have to have zero divisors. The
best we can do is encapsulated in the following definition.
\begin{defn}
(i) A $\phi$\emph{-ring} is a commutative unital ring $R$ equipped
with an \emph{endomorphism} $\phi$. It is called $\phi$-\emph{simple}
if it does not have any non-zero ideals $I$ invariant under $\phi$,
i.e. satisfying $\phi(I)\subset I.$

(ii) A \emph{Picard-Vessiot (PV) ring} for the $\phi$-module $M$
(associated to $A\in GL_{n}(K)$ as above) is a simple $\phi$-ring
extension $(R,\phi)$ of $(K,\phi)$ over which $M_{R}=(R\otimes_{K}M,\phi\otimes\Phi)$
is trivialized (i.e. becomes isomorphic to $(R^{n},\phi)$), and such
that $R=K[u_{ij},\det(U)^{-1}]$ if $U=(u_{ij})\in GL_{n}(R)$ is
a fundamental matrix of (\ref{eq:system}).
\end{defn}

Here are the main properties of PV rings.
\begin{itemize}
\item PV rings exist, are noetherian and (like any $\phi$-simple ring)
reduced. Furthermore, a PV ring $R$ is a finite product $R_{1}\times\cdots\times R_{t}$
of integral domains, permuted cyclically by $\phi.$
\item Since $\phi$ was assumed to be an automorphism of $K$ and $A$ is
invertible, a PV ring $R$ happens to be \emph{inversive: $\phi$
}is an automorphism of $R$.
\item The field of constants $C_{R}=R^{\phi}$ is an algebraic extension
of $C=K^{\phi}.$ If $C$ is algebraically closed, $C=C_{R}.$
\item The fundamental matrix $U\in GL_{n}(R)$ is unique up to $U\mapsto UV$
with $V\in GL_{n}(C_{R}).$
\item If $C$ is algebraically closed, any two PV rings for $M$ are (noncanonically)
isomorphic.
\item Let $L=Quot(R)$ be the total ring of fractions of $R$ (the localization
of $R$ in the $\phi$-invariant multiplicative set of non-zero divisors
of $R$). Thus $L=L_{1}\times\cdots\times L_{t}$ is a finite product
of fields, which are permuted cyclically by $\phi$. We have $L^{\phi}=C_{R}.$
A $\phi$-ring $L$ of this type is a called a $\phi$\emph{-pseudofield}.
\end{itemize}
Assume from now on that $C$ is algebraically closed.
\begin{lem}
Let $L$ be a $\phi$-pseudofield extension of $K$ which trivializes
$M$ and is generated over $K$ (as a pseudofield) by the entries
$u_{ij}$ of a fundamental matrix $U.$ Suppose that $L^{\phi}=C.$
Then $R=K[u_{ij},\det(U)^{-1}]\subset L$ is $\phi$-simple, hence
it is a PV ring for $M$, and $L$ is its total ring of fractions.
\end{lem}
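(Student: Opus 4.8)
The plan is to show that the ring $R=K[u_{ij},\det(U)^{-1}]$ has no nonzero proper $\phi$-invariant ideal, since $\phi$-simplicity is exactly what is missing for $R$ to qualify as a PV ring, all the other requirements being immediate: $M_R$ is trivialized because $U$ is a fundamental matrix, $R$ is generated in the prescribed way, and $R\subset L$. First I would let $I\subset R$ be a nonzero $\phi$-invariant ideal and aim for a contradiction by producing, out of $I$, a nonzero constant that lies in a proper ideal, contradicting $L^{\phi}=C$ together with the fact that $C$ consists of units.

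\textbf{Key steps.} The standard device is a minimal-length argument. Choose $0\neq a\in I$ whose expression in terms of the generators involves the fewest monomials in the $u_{ij}$ and $\det(U)^{-1}$; after multiplying by a suitable unit (a power of $\det(U)$ and an element of $K^{\times}$) we may assume one of the monomials occurring in $a$ has coefficient $1$. Since $\phi(I)\subset I$ and $\phi$ acts on the $u_{ij}$ through the invertible matrix $A$ over $K$ (so $\phi$ permutes monomials up to $K$-linear combinations of monomials of the same or smaller length — here one uses that $U$ is fundamental, i.e. $\phi(U)=AU$, so each $\phi(u_{ij})$ is a $K$-linear combination of the $u_{kj}$), the element $a-\phi(a)$ again lies in $I$ but is strictly shorter, hence is $0$ by minimality. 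Thus $a\in R^{\phi}$. Now $R^{\phi}\subset L^{\phi}=C$, so $a\in C$; but $a\neq 0$ and $C$ is a field, so $a$ is a unit, forcing $I=R$. This is the contradiction. Once $\phi$-simplicity is established, $R$ satisfies the definition of a PV ring for $M$. Finally, $L$ contains $R$, is generated as a pseudofield by the $u_{ij}$, and every element of $L$ is a quotient of elements of $R$ with denominator a non-zero-divisor (the localization generating $L$ from $R$), so $L=\mathrm{Quot}(R)$ by the uniqueness of the total ring of fractions inside a given overring.

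\textbf{Main obstacle.} The delicate point is the reduction ``replace $a$ by $a-\phi(a)$ and invoke minimality'': one must be careful that $\phi$ applied to a monomial of length $\ell$ in the $u_{ij}$ and $\det(U)^{-1}$ is an $R$-combination (indeed a $K$-combination) of monomials of length $\le\ell$, with the chosen leading monomial reproduced with coefficient in $K^{\times}$ that can be normalized away — this is where $\phi(U)=AU$ with $A\in GL_n(K)$ is used, together with the multiplicativity of $\phi$ and $\phi(\det U)=\det(A)\det(U)$. A small amount of bookkeeping is needed to make ``length'' a well-defined monovariant: one works in the polynomial presentation $K[x_{ij},y]/(y\det(x_{ij})-1)$ and orders monomials by total degree in the $x_{ij}$ after clearing $y$, or equivalently one fixes a monomial order refining degree. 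Granting this, the argument is the classical one and goes through without further difficulty; the hypothesis $L^{\phi}=C$ is exactly what prevents the shortened element from being a nonzero non-unit constant.
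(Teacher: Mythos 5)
The core of your argument---the minimal-length reduction ``pick $a\in I$ with the fewest monomials, normalize one coefficient to $1$, then $a-\phi(a)$ is strictly shorter''---has a genuine gap. The relation $\phi(U)=AU$ gives $\phi(u_{ij})=\sum_k a_{ik}u_{kj}$, so $\phi$ does \emph{not} stabilize the monomial basis in the $u_{ij}$: applying $\phi$ to a single monomial of degree $d$ and expanding produces a $K$-linear combination of many \emph{different} monomials of degree $d$. Consequently there is no reason for the normalized monomial of $a$ to reappear in $\phi(a)$ with the same coefficient, and $a-\phi(a)$ can easily have \emph{more} monomials than $a$, not fewer. (Concretely, with $n=2$ and $A=\left(\begin{smallmatrix}0&1\\1&0\end{smallmatrix}\right)$ one has $\phi(u_{11})=u_{21}$, so $u_{11}-\phi(u_{11})$ has two terms.) The trick you are invoking is sound only when $\phi$ acts essentially diagonally on a spanning set: it works in rank $1$ (where $\phi(u)=au$), and it is exactly the argument used to prove the auxiliary fact that if $D$ is $\phi$-simple with $D^\phi=C$ and $B$ is a $\phi$-ring generated over $D$ by $\phi$-constants, then $D\otimes_C B^\phi\xrightarrow{\sim} B$---there the elements $c_i\in B^\phi$ are fixed by $\phi$, so $\phi$ only touches the $D$-coefficients. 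In the lemma at hand, however, the $u_{ij}$ are not $\phi$-constants, so the reduction does not apply directly.

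The standard proof goes through the torsor structure rather than a direct induction on length: one passes to $L\otimes_K R$, observes that $Z=(U^{-1}\otimes1)(1\otimes U)$ has $\phi$-constant entries and generates $L\otimes_K R$ over $L\otimes1$, and then (using $\phi$-simplicity of the pseudofield $L$ together with $L^\phi=C$, via the auxiliary fact above) identifies $L\otimes_K R\cong L\otimes_C B_0$ with $B_0=(L\otimes_K R)^\phi=C[Z_{ij},\det Z^{-1}]$, so $\phi$-ideals of $L\otimes_K R$ correspond to ideals of $B_0$. A maximal ideal $\mathfrak n_0\subset B_0$ (which has residue field $C$, as $C$ is algebraically closed) then yields a $K$-$\phi$-algebra map $R\to (L\otimes_K R)/(L\otimes_C\mathfrak n_0)\cong L$ whose image is again $R$ (because $1\otimes U$ maps to $U\bar Z$ with $\bar Z\in GL_n(C)$), i.e.\ a surjective $\phi$-endomorphism of the Noetherian ring $R$, hence an automorphism. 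Any proper $\phi$-ideal $I$ satisfies $L\otimes_K I=L\otimes_C J_0$ with $J_0\subsetneq B_0$, so $J_0\subset\mathfrak n_0$ for some choice of $\mathfrak n_0$, and then $I\subset\ker(R\to R)=0$. The hypotheses $L^\phi=C$ and $C$ algebraically closed enter precisely here. Your final paragraph on $L=\mathrm{Quot}(R)$ is fine; it is the simplicity step that needs the different mechanism.
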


The last lemma is of great practical value, because it is often much
easier to check that $L^{\phi}=C$ then to verify directly that $R$
is $\phi$-simple. The $\phi$-pseudofield $L$ is called the PV\emph{
extension} associated with $M$.
\begin{itemize}
\item Notation as above, $L_{1}$ is a $\phi^{t}$-PV extension for $(M,\Phi^{t})$
over $(K,\phi^{t}).$ Note that the matrix associated to $(M,\Phi^{t})$
is 
\[
A_{[t]}=\phi^{t-1}(A)\cdots\phi(A)A.
\]
Thus, at the expense of replacing $\phi$ by a suitable power, we
may assume that $L$ is a field and $R$ a domain. In the current
paper, this will turn out to be always possible.
\end{itemize}
A PV ring $R$ for (\ref{eq:system}) is constructed as follows. Let
$X=(X_{ij})$ be an $n\times n$ matrix of indeterminates. Let $\phi$
act on the ring $\widetilde{R}=K[X_{ij},\det(X)^{-1}]$ via its given
action on $K$ and the formula
\[
\phi(X)=AX,
\]
i.e. $\phi(X_{ij})=\sum_{\nu=1}^{n}a_{i\nu}X_{\nu j}$. Let $I$ be
a maximal $\phi$-invariant ideal in $\widetilde{R}$. Then
\[
R=\widetilde{R}/I
\]
is a PV ring for (\ref{eq:system}), and $U=X\mod I$ is a fundamental
matrix in $GL_{n}(R).$ We remark that since $\widetilde{R}$ is noetherian,
any $\phi$-invariant ideal $I$ satisfies $\phi(I)=I.$

The reduced $K$-scheme $W=Spec(R)$ is called the PV\emph{ scheme}
associated with $R.$ Since the choice of a fundamental matrix $U$
amounts to a presentation $R=\widetilde{R}/I$ as above, the choice
of $U$ determines a closed embedding
\[
W\hookrightarrow GL_{n,K}.
\]

In general, the $K$-scheme $W$ might not have any $K$-points. We
shall see soon (Proposition \ref{prop:torsor is trivial}) that if
$K$ satisfies the conclusions of Proposition \ref{prop:Properties of K},
$W(K)\ne\emptyset$. This will be an important observation in our
context. 

\subsubsection{\label{subsec:The-map =00005Ctau}The map $\tau$}

If $h\in GL_{n}(K)=\mathrm{Hom}_{K}(K[X_{ij},\det(X)^{-1}],K)$ we
let
\[
\tau(h)=\phi\circ h\circ\phi^{-1}\in GL_{n}(K).
\]
If $X_{h}=h(X)$ is the matrix in $GL_{n}(K)$ representing the $K$-point
$h$, then since $\phi^{-1}(X)=\phi^{-1}(A)^{-1}X$ we have
\[
X_{\tau(h)}=\tau(h)(X)=A^{-1}\phi(X_{h}).
\]
If $h\in W(K),$ i.e. $h$ factors through $I$, then since $\phi^{-1}(I)=I,$
so does $\tau(h).$ Regarded as a subset of $GL_{n}(K),$ if $P\in W(K)$
then
\[
\tau(P)=A^{-1}\phi(P)\in W(K).
\]
The set of $K$-points of the Picard-Vessiot scheme is therefore,
if not empty, invariant under $\tau.$

\subsection{\label{subsec:The-difference-Galois}The difference Galois group
of $(M,\Phi)$}

We continue to assume that $C=K^{\phi}$ is algebraically closed.
Let $(M,\Phi)$ be a $\phi$-module, $A$ the matrix associated to
it in some basis, $R$ a PV ring and $L=Quot(R)$ the associated PV
extension.

Let $B$ be a $C$-algebra, with a trivial $\phi$-action. Writing
$-_{B}=B\otimes_{C}-$ we let
\[
\mathcal{G}(B)=\mathrm{Aut}_{\phi}(R_{B}/K_{B})=\mathrm{Aut}_{\phi}(L_{B}/K_{B})
\]
be the group of automorphisms of $R_{B}$ that fix $K_{B}$ pointwise
and commute with $\phi$. This yields a functor
\[
\mathcal{G}:\mathrm{Alg}_{C}\rightsquigarrow\mathrm{Groups}.
\]
Then:
\begin{itemize}
\item $\mathcal{G}$ is representable by a closed subgroup scheme of $GL_{n,C}.$
If $\sigma\in\mathcal{G}(B)$
\[
\sigma(U)=U\cdot V(\sigma)
\]
with $V(\sigma)\in GL_{n}(B)$ and $\sigma\mapsto V(\sigma)$ embeds
$\mathcal{G}$ in $GL_{n,C}.$ If $char(C)=0$ then $\mathcal{G}$
is reduced, but in positive characteristic we must include the possibility
of non-reduced $\mathcal{G}.$
\item If $A$ is replaced by $\phi(P)^{-1}AP$ (change of basis of $M$,
$P\in GL_{n}(K)$) and $U$ is replaced by $P^{-1}U$ then, since
$\sigma(P)=P$, we get \emph{the same embedding} $\mathcal{G}\hookrightarrow GL_{n,C}.$
If $U$ is replaced by another fundamental matrix for (\ref{eq:system}),
necessarily of the form $UT$ with $T\in GL_{n}(C),$ then $V(\sigma)$
is replaced by $T^{-1}V(\sigma)T.$ Thus $\mathcal{G}$ \emph{is uniquely
determined up to conjugation in} $GL_{n}(C).$
\item The coordinate ring of $\mathcal{G}$ is given by $C[\mathcal{G}]=(R\otimes_{K}R)^{\phi}$.
Let
\[
Z=(U^{-1}\otimes1)\cdot(1\otimes U)\in GL_{n}(R\otimes_{K}R),
\]
i.e. $Z_{ij}=\sum_{\nu=1}^{n}(U^{-1})_{i\nu}\otimes U{}_{\nu j}$.
Then
\[
\phi(Z)=(U^{-1}\otimes1)\cdot(A^{-1}\otimes1)\cdot(1\otimes A)\cdot(1\otimes U)=Z
\]
so $Z_{ij}\in C[\mathcal{G}]$ and $C[\mathcal{G}]=C[Z_{ij},\det Z^{-1}].$
We have
\[
\sigma\in\mathcal{G}(B)=\mathrm{\mathrm{Hom}}(C[\mathcal{G}],B)\leftrightsquigarrow(Z\mapsto V(\sigma))
\]
and $\mathcal{G}\hookrightarrow GL_{n,C}$ implies the comultiplication
\[
m^{*}(Z)=Z\otimes Z,
\]
i.e. $m^{*}(Z_{ij})=\sum_{\nu=1}^{n}Z_{i\nu}\otimes Z_{\nu j}$.
\item Inside $R\otimes_{K}R$ we have the canonical isomorphism
\[
R\otimes_{K}K[\mathcal{G}]=R\otimes_{C}C[\mathcal{G}]\simeq R\otimes_{K}R
\]
(since $(U\otimes1)\cdot Z=1\otimes U$), which means that $W=Spec(R)$
is a \emph{torsor} of $\mathcal{G}_{K}.$ We conclude that $W(K)\ne\emptyset$
is a necessary and sufficient condition for $W$ to be the trivial
torsor, i.e. to be (noncanonically) isomorphic to $\mathcal{G}_{K}.$
\item If $L$ is a field, $\mathrm{tr.deg.}L/K=\dim\mathcal{G}.$
\end{itemize}
\begin{prop}
\label{prop:torsor is trivial}Assume that $char(C)=0$ and that $K$
satisfies the conclusions of Proposition \ref{prop:Properties of K}
for every power of $\phi$. Then $W(K)\ne\emptyset$.
\end{prop}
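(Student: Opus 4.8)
The plan is to exploit the torsor structure already recorded above: we saw that $W = \mathrm{Spec}(R)$ is a torsor under the linear algebraic group $\mathcal{G}_K$ over $K$, where $\mathcal{G}$ is the difference Galois group. So proving $W(K) \neq \emptyset$ amounts to showing that this torsor is trivial, and for that we want to invoke Proposition \ref{prop:Properties of K}(ii), which says every torsor under a \emph{connected} linear algebraic group over $K$ is trivial. The one gap is connectedness: $\mathcal{G}$ need not be connected in general. So the first step is to reduce to the connected case by passing to the identity component $\mathcal{G}^{\circ}$.

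First I would consider $\mathcal{G}^{\circ}$, the identity component of $\mathcal{G}$, which is a connected linear algebraic group over $C$ of finite index in $\mathcal{G}$. The quotient $\mathcal{G}/\mathcal{G}^{\circ}$ is a finite group, and the corresponding intermediate object is a finite-dimensional $K$-algebra: concretely, the ring of $\mathcal{G}^{\circ}_K$-invariants in $R$, call it $R^{\mathcal{G}^{\circ}}$, is an \'etale $K$-algebra whose $\mathrm{Spec}$ is a $(\mathcal{G}/\mathcal{G}^{\circ})$-torsor over $K$. Equivalently, $W \to W/\mathcal{G}^{\circ}_K$ is a $\mathcal{G}^{\circ}_K$-torsor over the finite $K$-scheme $W/\mathcal{G}^{\circ}_K$, and the latter is a torsor under a constant finite group. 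The key point is that $R^{\mathcal{G}^{\circ}}$, being a finite product of finite field extensions of $K$ permuted by $\phi$, carries the action of $\phi$; and since $R$ is $\phi$-simple, so is $R^{\mathcal{G}^{\circ}}$, hence each of its field factors is a finite extension of $K$ to which a power of $\phi$ restricts as an automorphism. But Proposition \ref{prop:Properties of K}(iii) (applied to the relevant power of $\phi$, as in the hypothesis of the proposition) forbids any nontrivial such extension. Therefore $R^{\mathcal{G}^{\circ}} = K^{m}$ for some $m$, so $W/\mathcal{G}^{\circ}_K$ has a $K$-point; pulling back, $W$ itself admits, over $K$, a $\mathcal{G}^{\circ}_K$-torsor $W'$.

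The remaining step is routine: $W'$ is a torsor under the \emph{connected} linear algebraic group $\mathcal{G}^{\circ}_K$ over the field $K$, so by Proposition \ref{prop:Properties of K}(ii) it is the trivial torsor, whence $W'(K) \neq \emptyset$. Since $W'(K) \subset W(K)$, we conclude $W(K) \neq \emptyset$, as desired.

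The main obstacle is the connectedness issue, i.e. the passage through $\mathcal{G}/\mathcal{G}^{\circ}$ and the verification that the associated finite étale $K$-algebra is split. This is exactly where part (iii) of Proposition \ref{prop:Properties of K} — the statement that $K$ has no nontrivial finite extension to which $\phi$ (equivalently, a power of $\phi$) extends as an automorphism — is indispensable, and it is the reason the hypothesis of the present proposition demands that $K$ satisfy the conclusions of Proposition \ref{prop:Properties of K} \emph{for every power of $\phi$}: the factors of the PV ring $R$ are only cyclically permuted by $\phi$, so it is $\phi^{t}$ (with $t$ the number of factors) that acts as an automorphism on each factor and must be controlled. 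Once that point is in place, everything else is a direct application of the torsor triviality over a $C^{1}$-field.
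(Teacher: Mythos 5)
Your argument is correct and follows essentially the same strategy as the paper's proof: reduce to the identity component $\mathcal{G}^{0}$ by using Proposition \ref{prop:Properties of K}(iii) (applied to a suitable power of $\phi$) to kill off the component group, and then apply Proposition \ref{prop:Properties of K}(ii) to the resulting $\mathcal{G}^{0}_{K}$-torsor. The only cosmetic difference is that the paper works with the decomposition $R=R_{1}\times\cdots\times R_{t}$ into integral domains and shows each $W_{i}$ is geometrically irreducible (hence a $\mathcal{G}^{0}_{K}$-torsor), while you pass through the quotient torsor $W/\mathcal{G}^{0}_{K}=\mathrm{Spec}(R^{\mathcal{G}^{0}})$ and split it using $\phi$-simplicity — the two are equivalent repackagings of the same idea.
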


\begin{proof}
If $\mathcal{G}$ is connected, this follows from part (ii) of Proposition
\ref{prop:Properties of K}. Following Proposition 1.20 in \cite{S-vdP}
we explain how part (iii) of the same Proposition allows us to get
rid of the assumption that $\mathcal{G}$ is connected. Let
\[
R=R_{1}\times\cdots\times R_{t}
\]
be the decomposition of $R$ into a product of integral domains, permuted
cyclically by $\phi$. Since $K$ does not have any finite extension
to which $\phi^{t}$ extends, it is algebraically closed in the field
$L_{i}=Quot(R_{i})$. This means that $W_{i}=Spec(R_{i})$ remains
irreducible over the algebraic closure $\overline{K}$ of $K$. It
follows that one of the $W_{i}$, say $W_{1},$ is a torsor of $\mathcal{G}_{K}^{0}.$
Since $\mathcal{G}_{K}^{0}$ is connected, $W_{1}(K)\ne\emptyset$,
hence $W(K)\ne\emptyset.$
\end{proof}
We continue to assume that $C$ is algebraically closed of characteristic
0.
\begin{thm}
\label{thm:structure group and the Galois group}(\cite{S-vdP} Theorem
I.1.21) (i) Let $H\subset GL_{n,C}$ be a closed subgroup. If in some
basis $A\in H(K),$ then we can choose $U\in H(R)$ and $\mathcal{G}\subset H$.
For a general fundamental matrix $U\in GL_{n}(R),$ some conjugate
of $\mathcal{G}$ by an element of $GL_{n}(C)$ will be contained
in $H$.

(ii) Assume that the conclusions of Proposition \ref{prop:Properties of K}
hold. Then conversely, there exists a basis of $M$ with respect to
which $A\in\mathcal{G}(K).$ Equivalently, for the original $A$ there
exists a $P\in GL_{n}(K)$ such that $\phi(P)^{-1}AP\in\mathcal{G}(K).$

(iii) Under the assumptions of (ii) $\mathcal{G}$ is characterized
(up to conjugation by $GL_{n}(C)$) as a \emph{minimal} element of
the set
\[
\mathscr{H}=\{H\subset GL_{n,C}|\,H\,\mathrm{closed,}\,\exists P\in GL_{n}(K)\,\mathrm{s.t.}\,\,\,\phi(P)^{-1}AP\in H(K)\}.
\]
Every other element of $\mathscr{H}$ therefore contains a conjugate
of $\mathcal{G}$.

(iv) $\mathcal{G}/\mathcal{G}^{0}$ is cyclic.
\end{thm}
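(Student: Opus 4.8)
The plan is to prove the four assertions in the order (i), (ii), (iii), (iv), treating the first and third as formal and concentrating the real work in (ii) and (iv). For (i), I would exploit that $A\in H(K)$ means left translation by $A$ preserves the subgroup $H_K\subset GL_{n,K}$; hence $\phi$ can be made to act on the coordinate ring of $H_K$ — a $\phi$-stable quotient of $K[X_{ij},\det(X)^{-1}]$ — by the rule $\phi(X)=AX$, and dividing out by a maximal $\phi$-invariant ideal produces a Picard--Vessiot ring $R'$ whose canonical fundamental matrix $U'$ lies in $H(R')$. Since $C$ is algebraically closed, uniqueness of PV rings identifies $R'$ with $R$; then for $\sigma\in\mathcal{G}(B)$ the matrix $V(\sigma)=(U')^{-1}\sigma(U')$ lies in $H(R'_B)$ and has entries in $B$, so it lies in $H(B)$ because $H$ is cut out by polynomial equations over $C$. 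This gives $\mathcal{G}\subset H$ for this $U'$, and replacing $U'$ by $U'T$ with $T\in GL_n(C)$ conjugates $\mathcal{G}$ by $T$, which yields the last sentence of (i). No hypothesis on $K$ is needed here.

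For (ii) the decisive input is Proposition \ref{prop:torsor is trivial}: under the stated hypotheses the PV scheme $W=\mathrm{Spec}(R)$, which is a torsor under $\mathcal{G}_K$, has a $K$-point $P\in W(K)\subset GL_n(K)$. A torsor under $\mathcal{G}_K$ with a rational point is trivial, so $\mathcal{G}(K)$ acts simply transitively on $W(K)$ by right matrix multiplication in the chosen embedding. On the other hand the map $\tau$ discussed earlier sends the rational point $P$ to $\tau(P)=A^{-1}\phi(P)$, which again lies in $W(K)$; hence there is a unique $g\in\mathcal{G}(K)$ with $A^{-1}\phi(P)=P\,g$, that is, $\phi(P)^{-1}AP=g^{-1}\in\mathcal{G}(K)$. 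Performing the gauge transformation by $P$ — which, as noted earlier, leaves the embedded Galois group $\mathcal{G}\subset GL_{n,C}$ unchanged — puts the companion matrix into $\mathcal{G}(K)$, proving (ii).

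Assertion (iii) then drops out. Part (ii) shows $\mathcal{G}\in\mathscr{H}$. Conversely, if $H\in\mathscr{H}$ with $\phi(P)^{-1}AP\in H(K)$, then applying (i) to the gauge-transformed system shows that some $GL_n(C)$-conjugate of $\mathcal{G}$ is contained in $H$. Since $\mathscr{H}$ is visibly stable under conjugation by $GL_n(C)$ (conjugating $P$ on the right by an element of $GL_n(C)$, which is fixed by $\phi$), $\mathcal{G}$ is a minimal element and every minimal element is a $GL_n(C)$-conjugate of it; this is (iii).

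The part where I expect to have to be careful is (iv). I would use the decomposition $R=R_1\times\cdots\times R_t$ with the factors cyclically permuted by the automorphism $\phi$, and set $L_i=\mathrm{Quot}(R_i)$. Part (iii) of Proposition \ref{prop:Properties of K} (no nontrivial finite extension of $K$ carries an extension of $\phi^t$) shows $K$ is algebraically closed in each $L_i$, so each component $W_i=\mathrm{Spec}(R_i)$ of $W$ is geometrically connected, whence $\pi_0(W)$ has exactly $t$ elements. Being a torsor under $\mathcal{G}_K$, $W$ makes $\pi_0(W)$ a torsor under $\pi_0(\mathcal{G})=\mathcal{G}/\mathcal{G}^{0}$, a finite constant group since $C$ is algebraically closed of characteristic $0$; hence $|\mathcal{G}/\mathcal{G}^{0}|=t$. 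Finally I would check that the $\mathcal{G}_K$-action on $W$ commutes with $\phi$: that action is given by the comodule map $R\to R\otimes_C C[\mathcal{G}]$, $U\mapsto UZ$, on which $\phi$ acts only through the $R$-factor, fixing $Z\in GL_n(C[\mathcal{G}])$; so $\phi$ induces a $\pi_0(\mathcal{G})$-equivariant permutation of $\pi_0(W)$. As $\phi$ permutes the $t$ components in a single $t$-cycle, this forces $\pi_0(\mathcal{G})$, acting simply transitively on a set of $t$ elements with a distinguished $t$-cycle commuting with the action, to be cyclic of order $t$. The genuinely delicate points are exactly this last equivariance verification and the role of Proposition \ref{prop:Properties of K}, which both supplies the geometric connectedness of the $W_i$ in (iv) and, via Proposition \ref{prop:torsor is trivial}, underwrites (ii)--(iii).
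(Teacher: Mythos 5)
Your proofs of (i), (ii) and (iii) are essentially the paper's: for (i) you construct the PV scheme inside $H_K$ using the $\phi$-stability of the defining ideal (you gloss the comultiplication verification that $N_K$ is $\phi$-stable, which the paper spells out, but the underlying reason — $A\in H(K)$ makes $X\mapsto AX$ a self-map of $H_K$ — is the same); for (ii) you use Proposition \ref{prop:torsor is trivial} and the $\tau$-invariance of $W(K)$ exactly as in the text, and (iii) is the same formal consequence. So far the routes coincide.

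Where you genuinely diverge is (iv), which the paper simply outsources to \cite{S-vdP}. You sketch a self-contained argument via $\pi_0$: geometric connectedness of each $W_i$ (from Prop.~\ref{prop:Properties of K}(iii)) gives $\#\pi_0(W)=t$, the torsor structure identifies $\pi_0(W)$ with a simply transitive $\pi_0(\mathcal{G})$-set, the coaction $\rho\circ\phi=(\phi\otimes\mathrm{id})\circ\rho$ makes the cyclic $\phi$-permutation of components $\pi_0(\mathcal{G})$-equivariant, and a finite group admitting a commuting full-length cycle in the centralizer of its regular action is cyclic. This is correct. Note two things, though. First, to get $K$ algebraically closed in $L_i$ you must apply Prop.~\ref{prop:Properties of K}(iii) to $\phi^t$, so, just as in Prop.~\ref{prop:torsor is trivial}, the hypothesis has to be taken for all powers of $\phi$ (which is available in the (2Ell) setting). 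Second, the statement of (iv) carries no hypothesis on $K$, and the \cite{S-vdP} proof indeed works for any $\phi$-field with algebraically closed constants, arguing directly with the idempotents of $R$ (a copy of $C^t$ cyclically permuted by $\phi$) and the Galois correspondence, rather than with geometric components; your argument is only valid under the additional hypotheses of Prop.~\ref{prop:Properties of K}, i.e., under those of (ii). In the paper's context this costs nothing, but you should flag that your (iv) is not a proof of the general cited statement.
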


\begin{proof}
(i) Assume $A\in H(K)$, and $H$ is given explicitly as $Spec(C[X_{ij},\det(X)^{-1}]/N)$
where $N$ is a Hopf ideal. Let $\phi$ act on $K[X_{ij},\det(X)^{-1}]$
via the given action on $K$ and via $\phi(X)=AX$. Then $N_{K}$
is a $\phi$-ideal because if $f\in N$ then
\[
\phi(f)=(\alpha\times1)\circ m^{*}(f)
\]
where $m^{*}$ is the comultiplication and $\alpha$ the homomorphism
$C[X_{ij},\det(X)^{-1}]\to K$ substituting $A$ for $X$. But 
\[
m^{*}(f)\in N\otimes_{C}C[X_{ij},\det(X)^{-1}]+C[X_{ij},\det(X)^{-1}]\otimes_{C}N
\]
and $\alpha(N)=0$ since $A\in H(K).$ Thus $\phi(f)\in K\otimes_{C}N=N_{K}$.

Let $I$ be a maximal $\phi$-ideal in $K[X_{ij},\det(X)^{-1}]$ \emph{containing}
$N_{K}$ and $U=X\mod I$. Then
\[
W=Spec(R)=Spec(K[X_{ij},\det(X)^{-1}]/I)\subset H_{K}
\]
is the PV scheme, and $U\in H(R)$ (corresponding to the canonical
homomorphism 
\[
C[X_{ij},\det(X)^{-1}]/N\to K[X_{ij},\det(X)^{-1}]/I.)
\]
It follows that for $\sigma\in\mathrm{Aut_{\phi}}(R_{B}/K_{B})$ we
have $\sigma(U)\in H(R_{B})$, hence
\[
V(\sigma)=U^{-1}\sigma(U)\in H(R_{B})^{\phi}=H(B).
\]
Any other fundamental matrix is of the form $UT$ with $T\in GL_{n}(C)$,
so $T\mathcal{G}T^{-1}\subset H.$

(ii) Under our assumptions, $W(K)$ is non-empty. Any $P\in W(K)\subset GL_{n}(K)$
satisfies
\[
W=P\mathcal{G}_{K}.
\]
Since $\tau(P)\in W(K)$ as well (see \ref{subsec:The-map =00005Ctau}),
$\tau(P)^{-1}P=\phi(P)^{-1}AP\in\mathcal{G}(K)$ and there exists
a basis of $M$ for which $A\in\mathcal{G}(K).$

(iii) By (i) every member of $\mathscr{H}$ contains $\mathcal{G}$
up to conjugacy. By (ii) every $\mathcal{G}$ (uniquely determined
up to conjugacy) belongs to $\mathscr{H}.$ Thus $\mathcal{G}$ is
the unique minimal member of $\mathscr{H},$ up to conjugacy. Note
that it is not a-priori clear that all the minimal members of $\mathscr{H}$
are conjugate, but this follows from the proof.

(iv) See \cite{S-vdP}.
\end{proof}

\subsection{The Galois correspondence}

Let $R$ be a PV ring for $M$ and $L$ its total ring of fractions.
Let $\mathcal{G}$ be the difference Galois group of $M.$

We quote the following basic theorem. We say that $a\in L$ is fixed
by $\mathcal{G}$ and write $a\in L^{\mathcal{G}}$ if for every $B\in\mathrm{Alg}_{C}$
the element $1\otimes a\in L_{B}$ is fixed by $\mathcal{G}(B).$
The set $L^{\mathcal{G}}$ is a $\phi$-sub-pseudofield of $L$.
\begin{thm}
(i) For any closed subgroup $\mathcal{G}'\subset\mathcal{G}$ let
\[
\text{\ensuremath{\mathscr{F}(\mathcal{G}')=K'=}}L^{\mathcal{G}'},
\]
a $\phi$-sub-pseudofield of $L$ containing $K$. For any $\phi$-pseudofield
$K\subset K'\subset L$ let $\mathscr{G}(K')=\mathcal{G}'$ be the
closed subgroup of $\mathcal{G}$ whose $B$-points, for $B\in\mathrm{Alg}_{C}$
are 
\[
\mathscr{G}(K')(B)=\mathcal{G}'(B)=\mathrm{Aut_{\phi}}(L_{B}/K'_{B}).
\]
Then $\mathcal{G}'\mapsto K'=\mathscr{F}(\mathcal{G}')$ and $K'\mapsto\mathcal{G}'=\mathscr{G}(K')$
is a 1-1 correspondence between closed subgroups of $\mathcal{G}$
and $\phi$-sub-pseudofields of $L$ containing $K$. In particular
$L^{\mathcal{G}}=K.$

(ii) $\mathcal{G}'$ is normal in $\mathcal{G}$ if and only if $K'$
is a PV extension of some difference $\phi$-module $M'.$ In this
case the difference Galois group of $K'/K$ is $\mathcal{G}/\mathcal{G}'$.

(iii) $\mathcal{G}^{0}$ corresponds to the algebraic closure of $K$
in $L$. If we assume (replacing $\phi$ by some $\phi^{r}$) that
$L$ is a field, then since $L$ is a finitely generated field extension
of $K$, the algebraic closure of $K$ in $L$ is a finite extension.
Under Proposition \ref{prop:Properties of K}(iii) we conclude that
$K$ must be algebraically closed in $L$, hence $\mathcal{G}$ must
be connected.
\end{thm}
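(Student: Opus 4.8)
The statement is the classical difference Galois correspondence of \cite{S-vdP}, transposed to our pseudofield setting; the only ingredient genuinely special to the case (2Ell) is the appeal to Proposition \ref{prop:Properties of K}(iii) in part (iii), so the plan is to reproduce that development. The cornerstone is the identity $L^{\mathcal{G}}=K$. To establish it, use the canonical isomorphism $R\otimes_{C}C[\mathcal{G}]\cong R\otimes_{K}R$ of \S\ref{subsec:The-difference-Galois}: one checks that the composite of the coaction $\rho\colon R\to R\otimes_{C}C[\mathcal{G}]$, $\rho(u_{ij})=\sum_{k}u_{ik}\otimes Z_{kj}$, with this isomorphism is simply the right inclusion $r\mapsto 1\otimes r$ (a restatement of the identity $(U\otimes 1)\cdot Z=1\otimes U$). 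Hence $a\in R$ is $\mathcal{G}$-fixed, i.e.\ $\rho(a)=a\otimes 1$, exactly when $1\otimes a=a\otimes 1$ in $R\otimes_{K}R$, which by faithfully flat descent ($\mathrm{Spec}(R)$ being a $\mathcal{G}_{K}$-torsor, $R$ is faithfully flat over $K$) forces $a\in K$; passing to total rings of fractions gives $L^{\mathcal{G}}=K$, and the reverse inclusion is trivial.

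Next, for any $\phi$-sub-pseudofield $K\subseteq K'\subseteq L$ the extension $L/K'$ is itself a Picard--Vessiot extension: $L$ trivializes $M_{K'}$, is generated over $K'$ as a pseudofield by the entries $u_{ij}$ of the fundamental matrix $U$, and $L^{\phi}=C_{R}=C$, so the Lemma recorded above (a $\phi$-pseudofield with these properties is the total ring of fractions of the PV ring $K'[u_{ij},\det(U)^{-1}]$) applies, and the difference Galois group of $L/K'$ is $\mathscr{G}(K')$. Granting this, $\mathscr{F}\circ\mathscr{G}=\mathrm{id}$ is immediate: applying the cornerstone to the PV extension $L/K'$ yields $\mathscr{F}(\mathscr{G}(K'))=L^{\mathscr{G}(K')}=K'$. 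For $\mathscr{G}\circ\mathscr{F}=\mathrm{id}$, fix a closed $H\subseteq\mathcal{G}$, put $K'=\mathscr{F}(H)=L^{H}$ and $\widetilde{H}=\mathscr{G}(K')=\mathrm{Gal}(L/K')$; then $H\subseteq\widetilde{H}$ and, by the previous sentence, $L^{H}=K'=L^{\widetilde{H}}$. Writing $R'=K'[u_{ij},\det(U)^{-1}]$ and using $K'\subseteq R'\subseteq L$, intersection gives $(R')^{H}=(R')^{\widetilde{H}}=K'$. Now trivialize the $\widetilde{H}_{K'}$-torsor $\mathrm{Spec}(R')$ over the algebraic closure $\overline{K'}$, so $R'\otimes_{K'}\overline{K'}\cong\mathcal{O}(\widetilde{H}_{\overline{K'}})$ equivariantly; $H$-invariants commute with the flat base change $\overline{K'}/K'$, so $(R')^{H}\otimes_{K'}\overline{K'}\cong\mathcal{O}(\widetilde{H}_{\overline{K'}})^{H}$, the coordinate ring of the homogeneous space $\widetilde{H}/H$ over $\overline{K'}$. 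That ring strictly contains $\overline{K'}$ whenever $H\subsetneq\widetilde{H}$; since it equals $K'\otimes_{K'}\overline{K'}=\overline{K'}$, we conclude $H=\widetilde{H}$. This proves part (i).

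For part (ii): if $H\trianglelefteq\mathcal{G}$ then $K'=L^{H}$ is stable under all of $\mathcal{G}$, which acts on it through the linear algebraic group $\mathcal{G}/H$; a standard construction realizes $K'$ as the PV extension of an auxiliary module $M'$ (any faithful representation of $\mathcal{G}/H$, realized as a subquotient of a sum of tensor powers of $M$ and $M^{\vee}$), with Galois group $\mathcal{G}/H$. Conversely, if $K'$ is a PV extension of some $M'$, then $K'/K$ is $\phi$-normal, restriction of automorphisms gives a surjection $\mathcal{G}=\mathrm{Gal}(L/K)\twoheadrightarrow\mathrm{Gal}(K'/K)$ with kernel $\mathscr{G}(K')$, which is therefore normal with the asserted quotient --- all exactly as in \cite{S-vdP}. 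For part (iii): $\mathcal{G}^{0}\trianglelefteq\mathcal{G}$ has finite index, so by (ii) $K^{\sharp}:=L^{\mathcal{G}^{0}}$ is a PV extension of $K$ with finite Galois group, hence a finite \'etale $K$-algebra; moreover any intermediate $K''$ finite over $K$ has $\mathscr{G}(K'')$ of finite index in $\mathcal{G}$, hence $\mathscr{G}(K'')\supseteq\mathcal{G}^{0}$, i.e.\ $K''\subseteq K^{\sharp}$, so $K^{\sharp}$ is the algebraic closure of $K$ in $L$. Replacing $\phi$ by a power so that $L$ is a field, $L/K$ is a finitely generated field extension, so $K^{\sharp}/K$ is finite, and $\phi$ (preserving $K$ and algebraicity over $K$) restricts to an automorphism of $K^{\sharp}$; Proposition \ref{prop:Properties of K}(iii) (valid for every power of $\phi$, by the same Riemann--Hurwitz argument) forces $K^{\sharp}=K=L^{\mathcal{G}}$, and the injectivity from part (i) gives $\mathcal{G}^{0}=\mathcal{G}$.

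The step I expect to be the real obstacle is the surjectivity half of part (i): ruling out that $H$ is a proper closed subgroup of $\widetilde{H}=\mathscr{G}(\mathscr{F}(H))$. This is where one must combine the torsor structure of the PV scheme, the $\phi$-simplicity of PV rings, and the elementary fact of geometric invariant theory that $\mathcal{O}(G)^{H}$ strictly exceeds the constants whenever $H\subsetneq G$. Everything else is either formal or imported from \cite{S-vdP}, and the one place the elliptic ground field enters is the invocation of Proposition \ref{prop:Properties of K}(iii) in part (iii).
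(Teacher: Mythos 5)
The paper does not actually prove parts (i) and (ii); it cites them as standard (from \cite{S-vdP}) and only records the short argument for (iii) inline, which your paragraph for (iii) reproduces correctly. Parts (ii) and (iii) of your write-up are fine. The issue is in the "surjectivity" half of (i), which you yourself flag as the delicate step.

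Your argument to rule out $H\subsetneq\widetilde{H}$ passes to coordinate rings: you trivialize the torsor over $\overline{K'}$ to get $R'\otimes_{K'}\overline{K'}\cong\mathcal{O}(\widetilde{H}_{\overline{K'}})$, take $H$-invariants, and assert that $\mathcal{O}(\widetilde{H}_{\overline{K'}})^{H}$ (global regular functions on $\widetilde{H}/H$) strictly contains $\overline{K'}$ whenever $H\subsetneq\widetilde{H}$. That assertion is false when $\widetilde{H}/H$ is a positive-dimensional \emph{projective} homogeneous space — e.g.\ $\widetilde{H}=SL_{2}$ and $H$ a Borel, where $\widetilde{H}/H\cong\mathbb{P}^{1}$ and $\mathcal{O}(\widetilde{H})^{H}=\overline{K'}$. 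So the conclusion $H=\widetilde{H}$ does not follow as written. The correct classical argument works with \emph{rational} invariants, i.e.\ with the total ring of fractions $L$ rather than with $R'$: after trivializing over $\overline{K'}$ the (total) function field of the torsor becomes $\overline{K'}(\widetilde{H})$, and $\overline{K'}(\widetilde{H})^{H}$ does strictly exceed $\overline{K'}$ whenever $H\subsetneq\widetilde{H}$ (by the transcendence-degree count $\dim\widetilde{H}-\dim H>0$ when the inclusion is of smaller dimension, and by finite Galois theory of $\overline{K'}(\widetilde{H})/\overline{K'}(\widetilde{H})^{H}$ when $H$ has finite index). One then descends such a nonconstant $H$-invariant to $L^{H}$ and contradicts $L^{H}=K'=L^{\widetilde{H}}$. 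This is precisely why \cite{S-vdP} (and Magid in the differential case) phrase the correspondence in terms of intermediate \emph{fields} of the fraction field of the PV ring and use rational invariants rather than regular ones. Your appeal to $\phi$-simplicity and the torsor structure is the right toolkit, but the invariant-theoretic step must be carried out at the level of function fields, not coordinate rings.

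A second, minor remark on the cornerstone $L^{\mathcal{G}}=K$: the descent argument you give shows cleanly that $R^{\mathcal{G}}=K$, but "passing to total rings of fractions" to get $L^{\mathcal{G}}=K$ is not automatic; one usually runs a separate (short) argument using $\phi$-simplicity of $R$ — if $a\in L^{\mathcal{G}}\setminus K$, the ideal of denominators $\{s\in R:\ sa\in R\}$ is a nonzero $\mathcal{G}$- and $\phi$-stable ideal, hence all of $R$, so $a\in R^{\mathcal{G}}=K$. Worth making explicit.
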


\subsection{An example\label{subsec:elliptic example}}

Let $K$ and $\phi$ be as in case (Ell). Let $A\in GL_{2}(K)$ be
the matrix
\[
A=\left(\begin{array}{cc}
q & g_{q}(z,\Lambda)\\
0 & 1
\end{array}\right)
\]
where $g_{q}(z,\Lambda)=\zeta(qz,\Lambda)-q\zeta(z,\Lambda)\in K.$
The field
\[
E=K(z,\zeta(z,\Lambda))
\]
is a PV extension for the system $\phi(Y)=AY,$ and
\[
U=\left(\begin{array}{cc}
z & \zeta(z,\Lambda)\\
0 & 1
\end{array}\right)
\]
is a fundamental matrix. Note that $\phi$ induces an automorphism
of $E$. The difference Galois group is
\[
\mathcal{G}=\left\{ \left(\begin{array}{cc}
\alpha & \beta\\
0 & 1
\end{array}\right)|\,\alpha,\beta\in\mathbb{C}\right\} 
\]
and the unipotent subgroup ($\alpha=1$) corresponds, in the Galois
correspondence, to $K(z).$ The field $K(\zeta(z,\Lambda))$ is also
a $\phi$-subfield corresponding to the torus ($\beta=0)$, but is
not a normal extension of $K.$ Note that this field, unlike $E$,
depends on the lattice $\Lambda$.

\section{\label{sec:PPV}Review of parametrized Picard-Vessiot theory}

\subsection{$\psi$-linear algebraic groups}

\subsubsection{Generalities}

In this section $C$ will be an algebraically closed field of characteristic
0, equipped with an\emph{ }automorphism $\psi$ (denoted $\sigma$
in most of the references). For example, if $K$ is a $(\phi,\psi)$-field
and $C=K^{\phi}$ then $C$ inherits an action of $\psi,$ although
this action might well be trivial, as it is in the case (2Ell). We
let $\mathrm{Alg}_{C}^{\psi}$ denote the category of $C$-$\psi$-algebras\emph{}\footnote{\emph{It is important, when developing the general formalism, to abandon
the requirement that $\psi$ be invertible on a general $C$-$\psi$-algebra.
Thus while we maintain the assumption that $\psi$ is an automorphism
of $C$, hence $(C,\psi)$ is ``inversive'', we must allow rings
in which $\psi$ is only an endomorphism, perhaps not even injective,
in the category $\mathrm{Alg}_{C}^{\psi}.$}}. All our schemes and $\psi$-schemes will be affine. If $R\in\mathrm{Alg}_{C}^{\psi}$
we denote by $Spec^{\psi}(R)$ the functor
\[
Spec^{\psi}(R):\mathrm{Alg}_{C}^{\psi}\to\mathrm{Sets}
\]
defined by $Spec^{\psi}(R)(S)=\mathrm{Hom}_{C}^{\psi}(R,S)$ (homomorphisms
of $C$-$\psi$-algebras). Note that if $h\in Spec^{\psi}(R)(S)$
then $\psi(h)=\psi\circ h=h\circ\psi\in Spec^{\psi}(R)(S)$ as well,
so the functor factors through the category of $\psi$-sets.

Let $L$ be a $\psi$-field extension of $C$. A subset $\{a_{1},\dots,a_{n}\}$
of $L$ is called $\psi$\emph{-algebraically independent} over $C$
if the collection \{$\psi^{i}a_{j}|\,0\le i,\,\,1\le j\le n\}$ is
algebraically independent over $C$. The $\psi$-transcendence degree
of $L$ over $C$, denoted $\psi\mathrm{tr.deg.}(L/C)$, is the cardinality
of a maximal $\psi$-algebraically independent set in $L.$ This notion
is well defined (any two such maximal sets have the same cardinality).

We refer to appendix A of \cite{DV-H-W1} for an introduction to $\psi$-schemes
and $\psi$-group schemes. A $\psi$-\emph{algebraic group} $G$ over
$C$ is a $\psi$-group scheme that is $\psi$\emph{-algebraic} over
$C$. This means that its coordinate ring $C\{G\}$ is finitely $\psi$-generated
over $C:$ it contains a finite set $\{u_{1},\dots,u_{n}\}$ such
that $\psi^{i}(u_{j})$ for $0\le i$ and $1\le j\le n$ generate
$C\{G\}$ as a $C$-algebra. It is called $\psi$-\emph{reduced} if
$\psi$ is injective on $C\{G\},$ \emph{perfectly $\psi$-reduced
}if the equation $u^{e_{0}}\psi(u)^{e_{1}}\cdots\psi^{m}(u)^{e_{m}}=0$
($e_{i}\ge0$) forces $u=0$ in $C\{G\},$ and $\psi$\emph{-integral}
if $C\{G\}$ is an integral domain and $\psi$ is injective.

If $G$ is a $\psi$-algebraic group over $C$ its $\psi$-dimension
is a non-negative integer, defined in Definition $A.25$ of \cite{DV-H-W1}.
If $G$ is $\psi$-integral then
\[
\psi\dim(G)=\psi\mathrm{tr.deg.}(Quot(C\{G\})/C).
\]

If $\mathcal{G}$ is a (classical) algebraic group over $C$ then
the functor $B\mapsto\mathcal{G}(B^{\flat})$ from $\mathrm{Alg}_{C}^{\psi}$
to $\mathrm{Groups},$ where $B^{\flat}$ is the $C$-algebra $B$
with the $\psi$-structure forgotten, is representable by a $\psi$-algebraic
group that we denote $[\psi]\mathcal{G}.$ Suppose $\mathcal{G}$
is exhibited as a closed subgroup of $GL_{n,C},$ so that
\begin{equation}
\mathcal{G}=Spec(C[X_{ij},\det(X)^{-1}]/I)\label{eq:Hopf algebra}
\end{equation}
where $1\le i,j\le n$ and $I$ is a Hopf ideal. Then $G=[\psi]\mathcal{G}=Spec^{\psi}C\{G\}$
where $C\{G\}=C[\psi^{k}X_{ij},\det\psi^{k}(X)^{-1}]/I_{\psi}$, $0\le k<\infty,$
the $\psi^{k}X_{ij}$ are symbols treated as independent variables,
the $\psi$-action is the obvious one, and $I_{\psi}$ is the Hopf
$\psi$-ideal generated by all $\psi^{k}(h)$ for $h\in I$. As might
be expected,
\[
\psi\dim([\psi]\mathcal{G})=\dim(\mathcal{G}).
\]

As a non-trivial example of a $\psi$-algebraic group, consider the
$\psi$-closed subgroup of $[\psi]\mathbb{G}_{m}$ given by the equation
\begin{equation}
X^{e_{0}}\psi(X)^{e_{1}}\cdots\psi^{m}(X)^{e_{m}}=1\label{eq:=00005Cpsimonomials}
\end{equation}
for some $e_{i}\in\mathbb{Z}$ (here $\mathbb{G}_{m}=Spec(C[X,X^{-1}])$.

By Lemma A.40 of \cite{DV-H-W1} any closed $\psi$-subgroup of $[\psi]\mathbb{G}_{m}$
is defined by a (possibly infinite) collection of $\psi$-monomials
of the form (\ref{eq:=00005Cpsimonomials}). All that we shall need
is the following weaker result.
\begin{lem}
\label{lem:subgruops G_m}If $G\subsetneq[\psi]\mathbb{G}_{m}$ is
a proper closed $\psi$-subgroup of the multiplicative group, then
there exists a non-trivial $\psi$-monomial of the form (\ref{eq:=00005Cpsimonomials})
satisfied by $G$, and $\psi\dim(G)=0.$
\end{lem}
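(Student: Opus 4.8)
The plan is to exploit the structure of closed $\psi$-subgroups of $[\psi]\mathbb{G}_m$ described in Lemma A.40 of \cite{DV-H-W1}. Since $G \subsetneq [\psi]\mathbb{G}_m$ is a \emph{proper} closed $\psi$-subgroup, the defining collection of $\psi$-monomials of the form (\ref{eq:=00005Cpsimonomials}) cannot consist only of the trivial relation $X^0 = 1$; for otherwise $G$ would be all of $[\psi]\mathbb{G}_m$. Hence at least one nontrivial $\psi$-monomial relation $X^{e_0}\psi(X)^{e_1}\cdots\psi^m(X)^{e_m} = 1$ with $(e_0,\dots,e_m) \neq (0,\dots,0)$ is satisfied by $G$. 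This gives the first assertion immediately.

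For the dimension statement, first I would recall that $\psi\dim(G)$ is a non-negative integer and that for $G$ to have $\psi\dim(G) \geq 1$, one needs at least one $\psi$-transcendental element in $\mathrm{Quot}(C\{G\})$ over $C$ (in the $\psi$-integral case; in general one uses the definition in Definition A.25 of \cite{DV-H-W1}). The key point is that the nontrivial relation found above forces the image $\overline{X}$ of the coordinate to be \emph{$\psi$-algebraic} over $C$: the relation expresses $\psi^m(\overline{X})^{e_m}$ (with $e_m \neq 0$, after discarding trailing zero exponents) as a monomial in $\overline{X}, \psi(\overline{X}),\dots,\psi^{m-1}(\overline{X})$, so the set $\{\psi^k(\overline{X}) : k \geq 0\}$ is algebraically dependent over $C$. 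Since $C\{G\}$ is $\psi$-generated over $C$ by $\overline{X}$ and $\det$-inverses (which are powers of $\overline{X}$ and its $\psi$-iterates), the whole $\psi$-algebra $C\{G\}$, hence $\mathrm{Quot}(C\{G\})$, is $\psi$-algebraic over $C$. Therefore $\psi\mathrm{tr.deg.}(\mathrm{Quot}(C\{G\})/C) = 0$, so $\psi\dim(G) = 0$.

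A minor technical wrinkle is handling the case where $G$ is not $\psi$-integral (not an integral domain, or $\psi$ not injective on $C\{G\}$), since then the clean formula $\psi\dim(G) = \psi\mathrm{tr.deg.}(Quot(C\{G\})/C)$ is not directly available. Here I would fall back on the general definition of $\psi$-dimension from \cite{DV-H-W1}: it is defined via the growth of the transcendence degrees of the truncations $C[\psi^k \overline{X}, \dots : k \leq j]$ as $j \to \infty$, and the same $\psi$-monomial relation bounds these transcendence degrees uniformly (each new generator $\psi^{j}(\overline{X})$, once $j \geq m$, is algebraic over the previously adjoined ones), forcing the growth to be eventually constant, i.e.\ $\psi$-dimension $0$. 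Alternatively one may cite directly the relevant consequence of Lemma A.40 together with Corollary A.27 (or the analogue) of \cite{DV-H-W1} that a proper closed $\psi$-subgroup of $[\psi]\mathbb{G}_m$ is $\psi$-finite-dimensional of $\psi$-dimension $0$.

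I expect the only real obstacle to be purely expository: making sure the citation to Lemma A.40 of \cite{DV-H-W1} is invoked at the right level of generality and that the passage from ``a nontrivial $\psi$-monomial relation holds'' to ``$\psi\dim = 0$'' is justified without reproving the $\psi$-dimension theory. Since the lemma as stated only claims the existence of \emph{one} nontrivial relation and $\psi\dim(G) = 0$ — strictly weaker than the full classification — I would keep the argument short: cite A.40 for the existence of the defining $\psi$-monomials, observe that properness rules out the all-zero exponent vector, and then quote the dimension formula. This keeps the proof to a few lines, which is appropriate given that it is only a lemma used later.
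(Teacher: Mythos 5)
Your proposal is correct and matches the paper's approach: the paper likewise reduces the lemma to Lemma A.40 of \cite{DV-H-W1} (the classification of closed $\psi$-subgroups of $[\psi]\mathbb{G}_m$ by $\psi$-monomials) and simply records the statement as a weaker consequence, leaving implicit the observation that properness forces a nontrivial defining monomial. Your justification of $\psi\dim(G)=0$ -- that the relation, propagated by $\psi$, renders $\psi^m(\overline X)$ and all later iterates algebraic over the earlier ones so the truncated transcendence degrees stay bounded -- is the intended deduction, and your remark about the non-$\psi$-reduced case is the right caveat.
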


\subsubsection{(Classical) Zariski closure}

Let $\mathcal{G}$ be a (classical) linear algebraic group over $C$
and $G\subseteq[\psi]\mathcal{G}$ a $\psi$-closed subgroup. We say
that $G$ is \emph{Zariski dense} in $\mathcal{G}$ if for any proper
subvariety (or subgroup, since $G$ is a group functor, this will
turn out to be the same) $\mathcal{H}\subset\mathcal{G}$, $G\nsubseteq[\psi]\mathcal{H}.$
If $\mathcal{G}$ is a subgroup of $GL_{n,C}$ given by the Hopf algebra
(\ref{eq:Hopf algebra}), and 
\[
G=Spec^{\psi}(C[\psi^{k}X_{ij},\det\psi^{k}(X)^{-1}]/J)
\]
for a Hopf $\psi$-ideal $J$, a necessary and sufficient condition
for $G$ to be Zariski dense in $\mathcal{G}$ is that $J\cap C[X_{ij},\det(X)^{-1}]=I,$
i.e. the ``ordinary'' equations, not involving $\psi$, in the ideal
defining $G$, are just those defining $\mathcal{G}$. In such a case
$I_{\psi}\subset J$, because $J$ is a $\psi$-ideal and $I_{\psi}$
is the smallest $\psi$-ideal containing $I$, but this inclusion
might be strict if $G\subsetneq[\psi]\mathcal{G}.$

Conversely, starting with a $\psi$-algebraic group $G$ presented
in the above form, and defining
\[
J\cap C[X_{ij},\det(X)^{-1}]=:I
\]
one sees that $I$ is Hopf ideal in $C[X_{ij},\det(X)^{-1}]$ and
the algebraic group $\mathcal{G}$ that it defines is the (classical)
\emph{Zariski closure} of $G$. 

\subsubsection{The structure of $\psi$-linear algebraic groups whose Zariski closure
is simple}

We shall need the following result, which, for simplicity, we only
state in the case: $C=\mathbb{C}$ and $\psi$ is the identity. It
shows that if $\mathcal{G}$ is \emph{simple}, proper $\psi$-subgroups
of $[\psi]\mathcal{G},$ which are nevertheless Zariski dense in it,
are (under a mild technical condition of $\psi$-reducedness) of a
very special form. Alternatively, one may start with an arbitrary
$\psi$-reduced $\psi$-linear group $G$ and ask (i) that it be \emph{properly
contained} in its (classical) Zariski closure, i.e. $G$ ``is not
classical'', and (ii) that this Zariski closure be simple\footnote{In fact, allowing $m=0$ and $\alpha=1$ in the Proposition, one may
drop (i) and assume only that the Zariski closure of $G$ is simple.}.
\begin{prop}[\foreignlanguage{american}{\cite{DV-H-W2}, Proposition A.19 and Theorem A.20}]
\label{prop:structure when Zariski closure simple} Let $\mathcal{G}$
be a \emph{simple} linear algebraic group over $\mathbb{C}$ and $G\subsetneq[\psi]\mathcal{G}$
a \emph{proper}, $\psi$-closed, $\psi$-reduced subgroup of $[\psi]\mathcal{G}.$
Assume that $G$ is Zariski dense in $\mathcal{G}.$ Then there exists
an automorphism $\alpha\in\mathrm{Aut}(\mathcal{G})$ and an integer
$m\ge1$ such that
\[
G(B)=\{g\in\mathcal{G}(B)|\,\psi^{m}(g)=\alpha(g)\}
\]
for every $B\in\mathrm{Alg}_{\mathbb{C}}^{\psi}.$ Furthermore, replacing
$m$ by a multiple of it we find that there exists an $h\in\mathcal{G}(\mathbb{C})$
such that
\[
G(B)\subset\{g\in\mathcal{G}(B)|\,\psi^{m}(g)=hgh^{-1}\}.
\]
\end{prop}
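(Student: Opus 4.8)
This is Proposition A.19 and Theorem A.20 of \cite{DV-H-W2}, and the plan is to follow their argument. Since $\psi$ acts trivially on $\mathbb{C}$, every ``twist'' $\mathcal{G}^{\psi^{i}}$ is just $\mathcal{G}$, so the $\psi$-algebraic group $G\subseteq[\psi]\mathcal{G}$ is determined by the ascending chain of its order-$n$ Zariski closures $G^{[n]}\subseteq\mathcal{G}^{n+1}$, namely the Zariski closures of the images of the maps $g\mapsto(g,\psi g,\dots,\psi^{n}g)$. Zariski density of $G$ says $G^{[0]}=\mathcal{G}$, while $G\subsetneq[\psi]\mathcal{G}$ forces $G^{[n]}\subsetneq\mathcal{G}^{n+1}$ for some $n$; let $m\ge1$ be minimal with this property, so that $G^{[m-1]}=\mathcal{G}^{m}$. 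The hypothesis that $G$ is $\psi$-reduced enters here crucially: it makes $\psi$ injective on the coordinate ring $\mathbb{C}\{G\}$, so that applying $\psi$ to coordinates preserves Zariski closures. In particular the projection of $G^{[m]}$ onto its last $m$ coordinates, being the $\psi$-shift of $G^{[m-1]}$, is again all of $\mathcal{G}^{m}$.

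The heart of the matter is a Goursat-type analysis of $G^{[m]}\subseteq\mathcal{G}^{m+1}$, which surjects onto both the first-$m$ and the last-$m$ coordinate subproducts $\mathcal{G}^{m}$. The kernel of the projection onto the first $m$ coordinates is $\{(1,\dots,1,x):x\in N\}$ for a subgroup $N$ of $\mathcal{G}$ that is normal (conjugation by the copy of $\mathcal{G}$ carried by the last coordinate normalizes it), so by simplicity $N=\{1\}$ or $N=\mathcal{G}$; the latter would give $G^{[m]}=\mathcal{G}^{m+1}$, against minimality. Hence the last coordinate is a homomorphism of algebraic groups $\beta\colon\mathcal{G}^{m}\to\mathcal{G}$ of the first $m$; writing $\beta(g_{0},\dots,g_{m-1})=\prod_{i}\beta_{i}(g_{i})$ with pairwise commuting images, simplicity and non-commutativity of $\mathcal{G}$ force all but one $\beta_{i}$ to vanish and the survivor to be an automorphism, so $\psi^{m}(g)=\alpha(\psi^{j}(g))$ on $G$ for a single index $j$ and some $\alpha\in\mathrm{Aut}(\mathcal{G})$. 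But the coordinates $1,\dots,m$ of $G^{[m]}$ are mutually unconstrained (their joint projection is $\mathcal{G}^{m}$), which rules out $j\ge1$; thus $j=0$ and $G\subseteq H:=\{g:\psi^{m}(g)=\alpha(g)\}$. Finally $\mathbb{C}\{H\}\cong\mathbb{C}[\mathcal{G}^{m}]$, with $\psi$ cyclically permuting the $m$ coordinate blocks and applying $\alpha$ on the wrap-around, so $G\subseteq H$ corresponds to a Hopf $\psi$-ideal $\mathfrak{a}\subseteq\mathbb{C}[\mathcal{G}^{m}]$; the equality $G^{[m-1]}=\mathcal{G}^{m}$, together with reducedness of $\mathbb{C}\{G\}$ (automatic in characteristic zero), forces $\mathfrak{a}=0$, whence $G=H$ and the first assertion.

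For the addendum, recall that $\mathrm{Out}(\mathcal{G})=\mathrm{Aut}(\mathcal{G})/\mathrm{Inn}(\mathcal{G})$ is finite; since $\alpha$ has coefficients in $\mathbb{C}$ it commutes with $\psi$, so iterating $\psi^{m}(g)=\alpha(g)$ gives $\psi^{km}(g)=\alpha^{k}(g)$ on $G$, and taking $k=|\mathrm{Out}(\mathcal{G})|$ makes $\alpha^{k}$ inner, say $\alpha^{k}=\mathrm{Inn}(h)$ with $h\in\mathcal{G}(\mathbb{C})$; replacing $m$ by $km$ gives $G(B)\subseteq\{g:\psi^{m}(g)=hgh^{-1}\}$. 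The main obstacle in this program is the Goursat step and the two pieces of bookkeeping accompanying it — pinning down $j=0$ and upgrading $G\subseteq H$ to $G=H$ — each of which calls on simplicity of $\mathcal{G}$ and, repeatedly, on the injectivity of $\psi$ on $\mathbb{C}\{G\}$ supplied by $\psi$-reducedness; carrying this out rigorously requires the formalism of $\psi$-schemes and their Zariski closures developed in Appendix A of \cite{DV-H-W1} and in \cite{DV-H-W2}, which is why we are content to quote the result.
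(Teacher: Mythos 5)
The paper does not actually prove this proposition; it is quoted verbatim from \cite{DV-H-W2} (Proposition A.19 and Theorem A.20), and the ``proof'' in the paper is just the citation. Your reconstruction of the cited argument is sound: introducing the Zariski closures $G^{[n]}\subseteq\mathcal{G}^{n+1}$, choosing $m$ minimal with $G^{[m]}\subsetneq\mathcal{G}^{m+1}$, using $\psi$-reducedness to make the projection of $G^{[m]}$ onto the last $m$ coordinates again full, running a Goursat argument (in which $N$ is normal because the last-coordinate projection is surjective onto $\mathcal{G}$, and simplicity kills $N$), decomposing $\beta=\prod\beta_i$ with pairwise-commuting images so that simplicity plus non-commutativity isolates one $\beta_j\in\mathrm{Aut}(\mathcal{G})$, pinning $j=0$ by the fullness of the last-$m$ projection, upgrading $G\subseteq H$ to equality via $\mathbb{C}\{H\}\cong\mathbb{C}[\mathcal{G}]^{\otimes m}\cong\mathbb{C}\{G\}$, and finally using finiteness of $\mathrm{Out}(\mathcal{G})$ for the addendum — these are exactly the ingredients one needs, and the logical chain is correct. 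Since the paper leaves this to the reference and you also end by deferring to it, the two texts agree on approach; if you wanted to self-contain the argument you would have to supply the $\psi$-scheme formalism (the precise sense in which $G^{[n]}$ is a closed subgroup scheme, and that images of algebraic-group homomorphisms are closed, used tacitly when you pass from ``Zariski dense image'' to ``surjective''), but as a summary of \cite{DV-H-W2} your sketch is accurate.
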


\subsection{Parametrized Picard-Vessiot extensions}

Let $(K,\phi$) be an inversive $\phi$-field, and assume that it
is endowed with another automorphism $\psi$, commuting with $\phi.$
Assume that the field of $\phi$-constants $C=K^{\phi}$ is algebraically
closed of characteristic 0, and note that it inherits a structure
of a $\psi$-field. Let $(M$,$\Phi)$ be a $\phi$-module of rank
$n$ over $K$, and let $A\in GL_{n}(K)$ be the matrix associated
with it in some basis.
\begin{defn}
A $\psi$\emph{-Picard-Vessiot extension, }called also a \emph{parametrized
Picard-Vessiot (PPV) extension }for $(M,\Phi)$ (or the system $\phi(Y)=AY$),
is a $\phi$-pseudofield $L_{\psi}$ containing $K$ which:

(i) carries, in addition, a structure of a (not necessarily inversive)
$\psi$-field, commuting with $\phi$, 

(ii) trivializes $M$ after base-change, and if $U\in GL_{n}(L_{\psi})$
is a fundamental matrix for $\phi(Y)=AY,$
\[
L_{\psi}=K(u_{ij})_{\psi}=K(\psi^{k}(u_{ij}))
\]
is generated as a total ring (a ring in which every non-zero divisor
is invertible) by the elements $\psi^{k}(u_{ij})$ for $1\le i,j\le n$
and $0\le k<\infty.$ We shall express this property by saying that
$L_{\psi}$ is the $\psi$\emph{-hull} (as a total ring) of $K(u_{ij}).$

(iii) $L_{\psi}^{\phi}=K^{\phi}=C.$
\end{defn}

Here are the main facts about PPV extensions.
\begin{itemize}
\item A PPV extension $L_{\psi}$ as above exists (\cite{O-W}, Theorem
2.28 and Corollary 2.29). This is tricky! One is inclined to construct
inductively (classical) PV extensions for the $\phi$-modules
\[
M_{d}=M\oplus M^{(\psi)}\oplus\cdots\oplus M^{(\psi^{d})}
\]
and go to the limit when $d\to\infty.$ The difficulty is in showing
that we can get $L_{\psi}$ to be a \emph{finite} product of fields.
One should keep track of the number of connected components in this
inductive procedure, and prove that it stays bounded.
\item Let 
\[
R_{\psi}=K[u_{ij},\det(U)^{-1}]_{\psi}=K[\psi^{k}(u_{ij}),\psi^{k}(\det U)^{-1}]
\]
be the \emph{ring} $\psi$\emph{-hull} of $K[u_{ij},\det(U)^{-1}]$
inside $L_{\psi}.$ Then $R_{\psi}$ is $\phi$-simple and $L_{\psi}$
is its total ring of fractions. One calls $R_{\psi}$ the PPV \emph{ring}
of $M$. Since $U$ is uniquely determined up to right multiplication
by $V\in GL_{n}(C),$ $R_{\psi}$ is uniquly determined as a subring
of $L_{\psi}.$
\item Let $L=K(u_{ij})$ and $R=K[u_{ij},\det(U)^{-1}]$ (inside $L_{\psi}$).
Then $L$ is a (classical) PV extension and $R$ a PV ring for $M$.
\end{itemize}

\subsection{The parametrized difference Galois group}

\subsubsection{General facts and definitions}

Assumptions and notation as above, fix a PPV extension $L_{\psi}$
and the PPV ring $R_{\psi}\subset L_{\psi}.$ Consider the functor
$G:\mathrm{Alg}_{C}^{\psi}\rightsquigarrow\mathrm{Groups}$ given
by
\[
G(B)=\mathrm{Aut}_{\phi,\psi}((R_{\psi})_{B}/K_{B})=\mathrm{Aut}_{\phi,\psi}((L_{\psi})_{B}/K_{B}),
\]
the automorphisms of $(R_{\psi})_{B}=B\otimes_{C}R_{\psi}$ that fix
$B\otimes_{C}K$ pointwise and commute with both $\phi$ and $\psi$.
Here $B$ is given the trivial $\phi$-action. If $\sigma\in G(B)$
then
\[
\sigma(U)=UV(\sigma)
\]
with $V(\sigma)\in GL_{n}(B)$. Moreover, since $\sigma$ commutes
with $\psi,$ we have for every $i\ge0$
\begin{equation}
\sigma(\psi^{i}(U))=\psi^{i}(U)\psi^{i}(V(\sigma)).\label{eq:=00005Cpsicompatibilities}
\end{equation}
Thus the choice of $U$ determines an embedding $G\hookrightarrow[\psi]GL_{n,C}.$

The main facts about $G$, mirroring the facts listed for the classical
difference Galois group $\mathcal{G}$, are the following (see \cite{O-W},
section 2.7).
\begin{itemize}
\item $G$ is representable by a $\psi$-linear algebraic group.
\item The Hopf $\psi$-algebra $C\{G\}$ of $G$ is $(R_{\psi}\otimes_{K}R_{\psi})^{\phi}.$
\item The natural map 
\[
R_{\psi}\otimes_{C}C\{G\}=R_{\psi}\otimes_{K}K\{G\}\simeq R_{\psi}\otimes_{K}R_{\psi}
\]
sending $r\otimes h$ ($r\in R_{\psi},\,\,h\in C\{G\}$) to $r\otimes1\cdot h$
is an isomorphism of $K$-$(\phi,\psi)$-algebras. This means that
$W_{\psi}=Spec^{\psi}(R_{\psi})$ is a $\psi$-$G_{K}$-torsor.
\item If $L_{\psi}$ is a field, $\psi\dim(G)=\psi\mathrm{tr.deg.}(L_{\psi}/K)$.
\item The fixed field of a PPV extension under the parametrized Galois group
being defined in the same way as the fixed field of a PV extension
under the classical Galois group, we have
\[
L_{\psi}^{G}=K.
\]
\item More generally, there is a 1-1 correspondence between $\psi$-algebraic
subgroups of $G$ and intermediate $\phi$-pseudofields of $L_{\psi}$
stable under $\psi$. Normal $\psi$-subgroups correspond to PPV extensions
(of some other $\phi$-modules $M')$.
\end{itemize}

\subsubsection{Relation between $G$ and $\mathcal{G}$}

Let $L=K(u_{ij})\subset L_{\psi}$ be the classical PV extension inside
the PPV extension, and $R\subset R_{\psi}$ the PPV ring. Let $\mathcal{G}$
be the classical Galois group. The realization of $\sigma\in G(B)$
as $V(\sigma)\in GL_{n}(B)$ via its action on $U,$ namely 
\[
\sigma:U\mapsto UV(\sigma)
\]
shows that $\sigma$ restricts to an automorphism of $L_{B}$ over
$K_{B},$ hence a map of functors
\[
G\hookrightarrow[\psi]\mathcal{G},
\]
which is evidently injective. In general, it need not be an isomorphism,
as $\sigma\in[\psi]\mathcal{G}(B)=\mathcal{G}(B^{\flat})$ ``does
not know'' about the extra automorphism $\psi$, and may not extend
to $L_{\psi}$ so that the extra compatibilities (\ref{eq:=00005Cpsicompatibilities})
are satisfied. However, since 
\[
C[\mathcal{G}]=(R\otimes_{K}R)^{\phi}\hookrightarrow(R_{\psi}\otimes_{K}R_{\psi})^{\phi}=C\{G\}
\]
is injective, any function from $C[\mathcal{G}]$ that vanishes on
$G$ is 0. It follows that there does not exist a proper (classical)
subgroup $\mathcal{H}\subset\mathcal{G}$ with $G\subset[\psi]\mathcal{H},$
hence
\begin{itemize}
\item $G$ is Zariski dense in $\mathcal{G}$ (\cite{A-D-H-W}, Proposition
2.1.2).
\end{itemize}

\subsubsection{A Galoisian criterion for being $\psi$-isomonodromic}

The $\psi$-Galois group $G$ of a difference $\phi$-module $M$
enables us to state a criterion for $M$ to be $\psi$-isomonodromic,
i.e. for $M\simeq M^{(\psi)}.$
\begin{prop}[Theorem 2.55 of \cite{O-W}]
\label{prop:isomonodromic criterion} The $\phi$-module $M$ is
$\psi$-isomonodromic if and only if there exists an $h\in GL_{n}(C)$
such that
\[
\psi(X)=hXh^{-1}
\]
holds in $G$ (i.e. for any $B\in\mathrm{Alg}_{C}^{\psi}$ and any
$\sigma\in G(B)\subset\mathcal{G}(B)\subset GL_{n}(B),$ $\psi(\sigma)=h\sigma h^{-1}$).
\end{prop}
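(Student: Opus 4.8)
The plan is to characterize $\psi$-isomonodromy directly in terms of the parametrized Galois group $G$, using the torsor structure of $W_\psi=\mathrm{Spec}^\psi(R_\psi)$ and the dictionary between $M^{(\psi)}$ and the matrix $\psi(A)$. Recall from the lemma in \S2.3 that $M\simeq M^{(\psi)}$ as $\phi$-modules is equivalent to the existence of $T\in GL_n(K)$ with $\phi(T)^{-1}A\cdot\psi(A)^{-1}\cdot ?$-type relation; concretely, if $U$ is the fundamental matrix with $\phi(U)=AU$, then $\psi(U)$ satisfies $\phi(\psi(U))=\psi(A)\psi(U)$, so $\psi(U)$ is a fundamental matrix for $M^{(\psi)}$. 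An isomorphism $M\simeq M^{(\psi)}$ is a matrix $T\in GL_n(K)$ with $\phi(T)T^{-1}=\psi(A)A^{-1}$, equivalently $\psi(U)=TU\cdot h$ for some $h\in GL_n(C)$ (both sides being fundamental matrices for the same module over $K$, since $L_\psi^\phi=C$). Rearranging, $\psi(U)\,h^{-1}=T\,U$, i.e. $U^{-1}\psi(U)=h^{-1}\cdot(U^{-1}TU)$; but here one must be slightly more careful, as $T\in GL_n(K)$ need not commute with $U$. The cleaner route is to work on the torsor.

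First I would use the torsor isomorphism $R_\psi\otimes_C C\{G\}\simeq R_\psi\otimes_K R_\psi$, $r\otimes \bar h\mapsto (r\otimes 1)\cdot \bar h$, under which the matrix $Z=(U^{-1}\otimes 1)(1\otimes U)$ maps to the ``coordinate'' matrix $X$ of $G\hookrightarrow[\psi]GL_{n,C}$, and similarly $(\psi(U)^{-1}\otimes 1)(1\otimes\psi(U))$ corresponds to $\psi(X)$ because $\sigma$ commutes with $\psi$ (this is exactly the compatibility \eqref{eq:=00005Cpsicompatibilities}). Now observe that $\psi(U)^{-1}$ and $U^{-1}$ are both fundamental matrices on the ``left'' tensor factor for the transposed-inverse systems, and the key computation is: in $R_\psi\otimes_K R_\psi$,
\[
(\psi(U)^{-1}\otimes 1)(1\otimes \psi(U)) = (\psi(U)^{-1}U\otimes 1)\cdot Z\cdot (1\otimes U^{-1}\psi(U)).
\]
The point is that $U^{-1}\psi(U)\in GL_n(R_\psi)$, and it is $\phi$-invariant precisely because $\phi(U^{-1}\psi(U))=(A U)^{-1}\psi(A U)\cdot(\text{...})$ — let me instead phrase it invariantly: $M\simeq M^{(\psi)}$ over $K$ iff the $\phi$-module $\mathcal{H}om(M,M^{(\psi)})$ has a nonzero $\phi$-fixed point in $GL_n$, iff (by the Galois correspondence / Tannakian formalism for the classical group $\mathcal{G}$, using that $M^{(\psi)}$ is built from $M$ inside the same PPV extension) there is $h\in GL_n(C)$ conjugating the $\mathcal{G}$-action on the standard representation to its $\psi$-twist. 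Translating through $G$ Zariski dense in $\mathcal{G}$ and the relation \eqref{eq:=00005Cpsicompatibilities}, the condition becomes exactly: $\psi(X)=hXh^{-1}$ holds identically on $G$.

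Concretely the forward direction goes: if $T\in GL_n(K)$ gives $\phi(T)T^{-1}=\psi(A)A^{-1}$, set $h=U^{-1}T^{-1}\psi(U)\in GL_n(R_\psi)$; one checks $\phi(h)=h$ (direct computation using $\phi(U)=AU$, $\phi(\psi(U))=\psi(A)\psi(U)$, and the relation on $T$), so $h\in GL_n(L_\psi^\phi)=GL_n(C)$. Then for $\sigma\in G(B)$, applying $\sigma$ to $\psi(U)=TUh$ and using $\sigma(\psi(U))=\psi(U)\psi(V(\sigma))$, $\sigma(U)=UV(\sigma)$, $\sigma(T)=T$ gives $\psi(U)\psi(V(\sigma))=TUV(\sigma)h=\psi(U)h^{-1}V(\sigma)h$, whence $\psi(V(\sigma))=h^{-1}V(\sigma)h$; interpreting $V(\sigma)$ as the value of the coordinate matrix $X$ at $\sigma$, this says $\psi(X)=h^{-1}Xh$ on $G$ (rename $h^{-1}\rightsquigarrow h$). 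Conversely, if $\psi(X)=hXh^{-1}$ holds on all of $G$, then the element $T:=\psi(U)h^{-1}U^{-1}\in GL_n(R_\psi)$ is fixed by every $\sigma\in G(B)$ — the same computation run backwards — hence $T\in GL_n(R_\psi^G)$. Now $R_\psi^G\subseteq L_\psi^G=K$ by the Galois correspondence, so $T\in GL_n(K)$, and unwinding $T=\psi(U)h^{-1}U^{-1}$ via $\phi(U)=AU$ yields $\phi(T)T^{-1}=\psi(A)A^{-1}$, i.e. $M\simeq M^{(\psi)}$.

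The main obstacle is bookkeeping the noncommutativity: $T\in GL_n(K)$ and $h\in GL_n(C)$ sit on opposite sides of $U$, so one must be disciplined about left versus right multiplication when passing between the isomorphism $M\simeq M^{(\psi)}$ (which lives ``on the left'', acting on solution vectors) and the Galois-group relation (which lives ``on the right'', via $\sigma(U)=UV(\sigma)$). The device that makes this painless is precisely the $\phi$-invariant matrix $h=U^{-1}T^{-1}\psi(U)$ (resp. $T=\psi(U)h^{-1}U^{-1}$): checking $\phi$-invariance is a one-line matrix manipulation, and once we know such an element lies over $C$ (resp. over $K$, via the Galois correspondence $R_\psi^G=K$), both implications fall out by applying an arbitrary $\sigma\in G(B)$. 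A secondary point to record is that $M^{(\psi)}$ is trivialized inside the \emph{same} PPV extension $L_\psi$ — with fundamental matrix $\psi(U)$ — which is what legitimizes comparing $U$ and $\psi(U)$ over $C$; this is immediate since $L_\psi$ is a $\psi$-field and $\psi(U)\in GL_n(L_\psi)$.
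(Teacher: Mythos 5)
Your third paragraph is essentially the paper's proof: the paper sets $h=\psi(U)^{-1}A_\psi U$, checks $\phi$-invariance to conclude $h\in GL_n(C)$, then applies $\sigma\in G(B)$ in two ways to extract $\psi(V(\sigma))=hV(\sigma)h^{-1}$, and reverses the chain by defining $A_\psi=\psi(U)hU^{-1}$ and showing it is $G$-fixed, hence in $GL_n(K)$. Your $T$ plays the role of $A_\psi$ (up to replacing $h$ by $h^{-1}$), and your computations are the same.

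However, there is a genuine algebraic error in the relation you assign to $T$. You write $\phi(T)T^{-1}=\psi(A)A^{-1}$, but an isomorphism $T:M\to M^{(\psi)}$ of $\phi$-modules, with $\Phi(v)=A^{-1}\phi(v)$ and $(1\otimes\Phi)(v)=\psi(A)^{-1}\phi(v)$, satisfies $T\circ\Phi=(1\otimes\Phi)\circ T$, which unwinds to $\phi(T)\,A=\psi(A)\,T$, i.e.\ $\phi(T)AT^{-1}=\psi(A)$. This is the paper's compatibility $\phi(A_\psi)A_\phi=\psi(A_\phi)A_\psi$ with $T=A_\psi$. Your version puts the inverse of $T$ on the wrong side of $A$, and the two are not equivalent since $A$ and $T$ need not commute. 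Your own check that $h=U^{-1}T^{-1}\psi(U)$ is $\phi$-invariant already requires the correct relation: $\phi(h)=U^{-1}A^{-1}\phi(T)^{-1}\psi(A)\psi(U)$ equals $h$ exactly when $\phi(T)AT^{-1}=\psi(A)$, whereas substituting your stated relation gives $A^{-1}T^{-1}A\neq T^{-1}$ in general. The same slip recurs at the end when you ``unwind'' $T=\psi(U)h^{-1}U^{-1}$. Fix the stated relation and the argument is correct.

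A smaller remark: the first two paragraphs (the torsor detour, the ``$?$-type relation'', the appeal to Tannakian formalism) are hedged, internally inconclusive, and not used in the concrete argument; they should be cut. The one genuinely useful observation buried there is that $\psi(U)$ is a fundamental matrix for $M^{(\psi)}$ inside the \emph{same} PPV field $L_\psi$, which is what makes it legitimate to compare $U$ with $\psi(U)$ and land in $GL_n(C)$; that point is worth keeping, but it is a one-line remark, not a proof strategy.
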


\begin{proof}
Assume that $M$ is $\psi$-isomonodromic. Then there exists a matrix
$A_{\psi}\in GL_{n}(K)$, such that with $A_{\phi}=A$, we have the
compatibility relation
\[
\phi(A_{\psi})A_{\phi}=\psi(A_{\phi})A_{\psi}.
\]
Using this relation and the relation $1=\phi(U)^{-1}A_{\phi}U$ we
see that
\[
h=\psi(U)^{-1}A_{\psi}U
\]
is fixed under $\phi,$ hence belongs to $GL_{n}(L_{\psi}^{\phi})=GL_{n}(C)$.
Let $\sigma\in G(B)$ and compute $\sigma(\psi(U))$ in two ways.
On the one hand
\[
\sigma(\psi(U))=\sigma(A_{\psi}Uh^{-1})=A_{\psi}UV(\sigma)h^{-1}.
\]
On the other hand
\[
\sigma(\psi(U))=\psi(\sigma(U))=\psi(UV(\sigma))=A_{\psi}Uh^{-1}\psi(V(\sigma)).
\]
Comparing the two expressions we get $\psi(V(\sigma))=hV(\sigma)h^{-1}.$
This string of identities can be reversed. Starting with $h$ as above
and defining $A_{\psi}=\psi(U)hU^{-1}$, we see that $A_{\psi}$ is
fixed by every $\sigma$ in the Galois group, hence lies in $GL_{n}(K)$,
and we get the desired compatibility relation between $A_{\psi}$
and $A_{\phi}.$
\end{proof}
\begin{rem}
The last proof can be given a matrix-free version. If $h:M\simeq M^{(\psi)}$
is an isomorphism of $\phi$-modules, then $h$ can be base-changed
to $L_{\psi}$ and then, since it commutes with $\Phi$, induces an
isomorphism between the modules of solutions. If $\sigma\in G$ (and
not only in $\mathcal{G})$ then $\sigma$ induces a commutative diagram
\[
\begin{array}{ccc}
M_{L_{\psi}}^{\Phi} & \overset{h}{\to} & (M_{L_{\psi}}^{(\psi)})^{\Phi}\\
\sigma\downarrow &  & \downarrow\psi(\sigma)\\
M_{L_{\psi}}^{\Phi} & \overset{h}{\to} & (M_{L_{\psi}}^{(\psi)})^{\Phi}
\end{array}
\]
yielding the relation $\psi(\sigma)=h\sigma h^{-1}.$ If we identify,
as usual, the spaces of solutions $M_{L_{\psi}}^{\Phi}$ and $(M_{L_{\psi}}^{(\psi)})^{\Phi}$
with $\mathbb{C}^{n}$, in the bases given by the columns of $U$
and $\psi(U),$ then $h$ becomes a matrix in $GL_{n}(\mathbb{C})$
as in the Proposition. Conversely, a descent argument shows that given
such a diagram relating the spaces of solutions (\emph{after} base
change to $L_{\psi}$) yields an isomorphism $M\simeq M^{(\psi)}$
(\emph{before} the base-change). In fact, this ``conceptual proof''
is not any different than the ``matrix proof'' by Ovchinnikov and
Wibmer. Unwinding the arguments, one sees that the two proofs are
one and the same.
\end{rem}

\subsection{Example \ref{subsec:elliptic example} continued}

Suppose we add, in Example \ref{subsec:elliptic example}, a second
difference operator $\psi f(z)=f(pz),$ as in the case (2Ell). Then
the $\phi$-module corresponding to the system $\phi(Y)=AY$ is $\psi$-isomonodromic,
and the corresponding system $\psi(Y)=BY$ is given by
\[
B=\left(\begin{array}{cc}
p & g_{p}(z,\Lambda)\\
0 & 1
\end{array}\right).
\]
The compatability relation
\[
\phi(B)A=\psi(A)B
\]
is satisfied. The field $E$ is also a PPV extension, being $\psi$-stable.
The $\psi$-Galois group $G$ is ``classical'', i.e.
\[
G=[\psi]\mathcal{G}.
\]

\section{Some preliminary results}

\subsection{Isomonodromy and solvability}

Let $(K,F,\phi,\psi)$ be as in the case (2Ell) and assume that $2\le p,q$
and $(p,q)=1.$ Let $M$ be a $\phi$-module over $K$, $A$ the associated
matrix (in a fixed basis), $R$ a PV ring for $M$, $L=Quot(R)$ the
corresponding PV extension, $U\in GL_{n}(R)$ a fundamental matrix,
and $\mathcal{G}\subset GL_{n,\mathbb{C}}$ the difference Galois
group, its embedding in $GL_{n,\mathbb{C}}$ determined by the choice
of $U$. The following theorem will be used in the proof of Theorem
\ref{thm:Main Theorem B}, but has independent interest.
\begin{thm*}[= Theorem \ref{thm:isomonodromy and solvability}]
 Assume that $M$ is $\psi$-isomonodromic. Then $\mathcal{G}$ is
solvable.
\end{thm*}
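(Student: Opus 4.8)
The plan is to exploit the fact, recorded earlier, that $M$ being $\psi$-isomonodromic forces the difference Galois group $\mathcal G$ to be severely constrained, and then to invoke the main structure theorem of \cite{dS2} to pin down the possible $\mathcal G$. First I would recall that, by Proposition \ref{prop:isomonodromic criterion}, the $\psi$-Galois group $G \subseteq [\psi]\mathcal G$ satisfies the relation $\psi(X) = hXh^{-1}$ in $G$ for some fixed $h \in GL_n(\mathbb C)$; since $G$ is Zariski dense in $\mathcal G$, this relation cuts out a $\psi$-closed subgroup of $[\psi]\mathcal G$ that is nonetheless Zariski dense — in the terminology of the paper, $G$ is ``not classical'' unless $\psi$ acts trivially on $\mathcal G$ after conjugation, i.e. unless $M$ already becomes trivial in a strong sense. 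The key dichotomy to set up is: either $\mathcal G^0$ is solvable (and we are done, since $\mathcal G/\mathcal G^0$ is cyclic by Theorem \ref{thm:structure group and the Galois group}(iv), hence $\mathcal G$ is solvable), or $\mathcal G^0$ has a nontrivial semisimple quotient, and I must derive a contradiction from the latter.

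So suppose $\mathcal G$ is not solvable. Then the quotient of $\mathcal G^0$ by its radical is a nontrivial semisimple group, and projecting, one obtains a simple algebraic group $\mathcal S$ over $\mathbb C$ together with a $\psi$-closed, Zariski-dense, $\psi$-reduced subgroup $G_{\mathcal S} \subsetneq [\psi]\mathcal S$ (the $\psi$-reducedness coming from the fact that the PPV ring is $\phi$-simple, hence $\psi$ acts injectively on the relevant coordinate rings; I would need to check this descends to $\mathcal S$, which is the kind of routine verification I will not grind through here). Now Proposition \ref{prop:structure when Zariski closure simple} applies: there is an $m \ge 1$ and an $h_{\mathcal S} \in \mathcal S(\mathbb C)$ with
\[
G_{\mathcal S}(B) \subseteq \{ g \in \mathcal S(B) \mid \psi^m(g) = h_{\mathcal S}\, g\, h_{\mathcal S}^{-1}\}.
\]
Replacing $\phi$ by $\phi^m$ (which, as noted in the excerpt, keeps us in the same framework and only replaces $A$ by $A_{[m]}$), I may assume $m = 1$. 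Translating this back through Theorem \ref{thm:structure group and the Galois group}(ii)–(iii) and the Galois correspondence, the quotient $\phi$-module with Galois group $\mathcal S$ is then itself $\psi$-isomonodromic with an essentially ``finite'' amount of new data — concretely, its associated matrix can be gauged into $\mathcal S(K)$, and the isomonodromy datum $A_\psi$ lives in $\mathcal S(K)$ as well.

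At this point I would invoke the structure theorem for elliptic $(\phi,\psi)$-difference modules from \cite{dS2}: under the hypotheses $2 \le p,q$ and $(p,q)=1$, a $(\phi,\psi)$-module over $K$ decomposes, up to the ambiguity measured by the vector bundles on $E_\Lambda$ (hence up to the ring $S$), into pieces that are, in an appropriate sense, trivial or built from characters — in particular its difference Galois group cannot surject onto a nonabelian simple group. (This is precisely the content that makes Theorem \ref{thm:dS2} true: $S_\phi(F/K) \cap S_\psi(F/K) = S$, not something larger, because no ``interesting'' semisimple Galois group can occur isomonodromically.) That contradicts the existence of the simple quotient $\mathcal S$, and the non-solvability assumption collapses. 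I expect the main obstacle to be exactly this last step: extracting from \cite{dS2} the precise statement that an isomonodromic $M$ has no nonabelian simple subquotient in its Galois group, and verifying that the passage to a semisimple quotient and the base-change $\phi \rightsquigarrow \phi^m$ preserve all the hypotheses ($\psi$-reducedness, the integrality needed for $L_\psi$ to be a field, and compatibility with the Galois correspondence of Theorem \ref{thm:structure group and the Galois group}(ii)). Everything else — the reduction via $\mathcal G/\mathcal G^0$ cyclic, the application of Proposition \ref{prop:structure when Zariski closure simple}, and the translation between Galois groups and gauged matrices — is formal.
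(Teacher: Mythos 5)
Your proposal does not actually prove the theorem: it reduces it to a claim which is essentially the theorem itself, and which you acknowledge you have not verified. The route you sketch — pass to a simple quotient $\mathcal S$ of $\mathcal G$, observe that isomonodromy forces $G_{\mathcal S}$ to be a proper, Zariski-dense, $\psi$-reduced subgroup of $[\psi]\mathcal S$, and apply Proposition~\ref{prop:structure when Zariski closure simple} — produces only the constraint $\psi^m(g)=h_{\mathcal S}gh_{\mathcal S}^{-1}$ on $G_{\mathcal S}$; it does not, by itself, yield any contradiction. The contradiction you then wave at (``extracting from \cite{dS2} the precise statement that an isomonodromic $M$ has no nonabelian simple subquotient in its Galois group'') is not an extraction at all: that statement \emph{is} Theorem~\ref{thm:isomonodromy and solvability}, up to the formal reduction $\mathcal G/\mathcal G^0$ cyclic. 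Moreover the argument as you lay it out is circular in the logical architecture of the paper: the chain ``proper $\psi$-subgroup $\Rightarrow$ $\psi^m$-isomonodromic $\Rightarrow$ $\mathcal G$ solvable $\Rightarrow$ contradiction with simplicity'' is precisely the paper's proof of Proposition~\ref{prop:If U =00005Cpsi algebraic then G solvable}, which \emph{uses} Theorem~\ref{thm:isomonodromy and solvability} as input. Running that argument backwards cannot prove the theorem.

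The paper's proof is direct, makes no use of the parametrized Galois machinery, and exploits a much more concrete fact from \cite{dS2} than the one you appeal to. Isomonodromy lets one endow $M$ with a full $(\phi,\psi)$-module structure; Theorem~35 of \cite{dS2} then gives an explicit normal form for the matrices $A_\phi$ and $A_\psi$ in block form, with blocks $A_{ij}(z)=U_{r_i}(z/p)\,T_{ij}(z)\,U_{r_j}(z)^{-1}$ where the $T_{ij}$ have \emph{constant} entries. Taking the $K$-span of the first basis vector in each block yields a $(\phi,\psi)$-submodule $M'$ on which $\Phi$ and $\Psi$ act by constant, commuting matrices; a common eigenvector over $\mathbb C$ gives a rank-one $(\phi,\psi)$-submodule, and induction produces a full flag of $(\phi,\psi)$-submodules, i.e.\ a basis in which $A$ is upper triangular. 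Theorem~\ref{thm:structure group and the Galois group}(i) then places $\mathcal G$ inside a Borel subgroup. If you want to salvage your plan, you would need to actually open up \cite{dS2} and carry out this triangularization — at which point the simple-quotient and $\psi$-reducedness scaffolding becomes unnecessary.
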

\begin{proof}
By Theorem \ref{thm:structure group and the Galois group}(i), it
is enough to show that with respect to a suitable basis of $M$ the
matrix $A$ is upper triangular. Indeed, if this is the case, take
$H$ to be the Borel subgroup of upper triangular matrices. Since
(a conjugate of) $\mathcal{G}\subset H$ and $H$ is solvable, $\mathcal{G}$
is solvable too.

Endow $M=(M,\Phi,\Psi)$ with a $(\phi,\psi)$-module structure. Call
$M$ solvable if there exists a sequence
\[
0\subset M_{1}\subset\cdots\subset M_{n}=M
\]
of $(\phi,\psi)$-submodules $M_{i},$ $\mathrm{rk}(M_{i})=i$. This
would clearly imply that $A=A_{\phi}$ is gauge-equivalent to a matrix
in upper triangular form.

We show that $M$ is solvable. By induction, it is enough to show
that $M$ contains a rank-one ($\phi,\psi)$-submodule. We apply Theorem
35 of \cite{dS2}. Using the notation there, let $(r_{1},\dots,r_{k})$
be the type of $M,$ $r_{1}\le r_{2}\le\cdots\le r_{k},$ $\sum_{i=1}^{k}r_{i}=n.$
Let $e_{1},\dots,e_{n}$ be the basis of $M$ in which $A$ has the
form prescribed by that theorem. Recall that if we write $A=(A_{ij})$
in block-form, $A_{ij}\in M_{r_{i}\times r_{j}}(K)$, then
\[
A_{ij}(z)=U_{r_{i}}(z/p)T_{ij}(z)U_{r_{j}}(z)^{-1}.
\]
The marix $U_{r}(z)$ is unipotent upper-triangular. The matrix
\[
T_{ij}=\left(\begin{array}{cc}
0 & T_{ij}^{*}\\
0 & 0
\end{array}\right)
\]
where $T_{ij}^{*}$ is a square upper-triangular $s\times s$ marix
for $s\le\min(r_{i},r_{j})$, with \emph{constant} (i.e. $\mathbb{C}$-valued)
entries. An analogous description, with a constant matrix $S$ replacing
$T$, gives the matrix $B,$ associated with $\Psi$ in the same basis.

Let
\[
i_{1}=1,\,\,i_{2}=r_{1}+1,\dots,i_{k}=r_{1}+\cdots+r_{k-1}+1
\]
be the first indices in each of the $k$ blocks. Let $M'=Span_{K}\{e_{i_{1}},\dots,e_{i_{k}}\}.$
Then $M'$ is a rank $k$ $(\phi,\psi)$-submodule of $M$. Moreover,
$\Phi|_{M'}$ and $\Psi|_{M'}$ are given in this basis by \emph{constant}
matrices. In other words, $M'$ descends to $\mathbb{C},$ $M'=M'_{0}\otimes_{\mathbb{C}}K,$
where $M'_{0}$ is a $\mathbb{C}$-representation of $\Gamma=\left\langle \phi,\psi\right\rangle \simeq\mathbb{Z}^{2}$,
and $\Phi$ and $\Psi$ are extended semilinearly from $M'_{0}$ to
$M'$. Since any two commuting endomorphisms of a $\mathbb{C}$-vector
space have a common eigenvector, $M'$ has a rank-one $(\phi,\psi)$-submodule
$M_{1}\subset M'\subset M$, which concludes the proof.
\end{proof}
Incidentally, note that we have given an affirmative answer to Problem
36 in \cite{dS2}.

\subsection{\label{subsec:A-periodicity-theorem}A periodicity theorem}

In this subsection we generalize Theorem 1.1 of \cite{dS1}. Let $\mathscr{D}$
be the $\mathbb{Q}$-vector space of discretely supported functions
$f:\mathbb{C}\to\mathbb{Q},$ i.e. functions for which $\mathrm{supp}(f)=\{z|\,f(z)\ne0\}$
has no accumulation point in $\mathbb{C}.$ For any lattice $\Lambda\subset\mathbb{C}$
let $\mathscr{D}_{\Lambda}$ be the subspace of $f\in\mathscr{D}$
which are $\Lambda$-periodic. We may identify
\[
\mathscr{D}_{\Lambda}=\mathrm{Div}(E_{\Lambda})_{\mathbb{Q}}
\]
with the group of $\mathbb{Q}$-divisors on the elliptic curve $E_{\Lambda}.$

Given two functions $f,\widetilde{f}\in\mathscr{D}$ we say that $\widetilde{f}$
is a \emph{modification at 0} of $f$ if $\widetilde{f}(z)=f(z)$
for every $z\ne0.$

Let $2\le p,q\in\mathbb{N}$ be relatively prime integers: $(p,q)=1.$
Consider the operators
\[
\phi f(z)=f(qz),\,\,\,\psi f(z)=f(pz)
\]
on $\mathscr{D}.$ These operators preserve every $\mathscr{D}_{\Lambda}.$
\begin{prop}
\label{prop:periodicity theorem}Let $f\in\mathscr{D}$. Assume that
for some $\Lambda$-periodic $f_{p},f_{q}\in\mathscr{D}_{\Lambda}$
the relations
\[
f_{q}(z)=f(qz)-f(z)
\]
\[
f_{p}(z)=\sum_{i=0}^{m}e_{m-i}f(p^{1-i}z)
\]
($e_{i}\in\mathbb{Q},$ $e_{m}=1,$ $e_{0}\ne0$) hold for all $z\ne0$.
Then, after replacing $\Lambda$ by a sublattice, a suitable modification
$\widetilde{f}$ of $f$ at $0$ is $\Lambda$-periodic.
\end{prop}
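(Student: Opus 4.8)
Since $f \in \mathscr{D}$ has discrete support, its restriction away from $0$ is controlled by the two recursions, so the essential task is to produce a lattice $\Lambda'$ and a single value $c$ such that replacing $f(0)$ by $c$ makes $f$ genuinely $\Lambda'$-periodic. First I would use the $q$-recursion $f_q(z) = f(qz) - f(z)$: iterating it gives, for $z \neq 0$ and all $k \geq 1$,
\[
f(q^k z) = f(z) + \sum_{j=0}^{k-1} f_q(q^j z),
\]
and since $f_q$ is $\Lambda$-periodic this expresses the behavior of $f$ along the ``$q$-orbit'' of any point in terms of $f$ near $0$ together with elliptic data. The key point from Theorem 1.1 of \cite{dS1} (the ``baby'' periodicity theorem) is exactly that such a single $q$-difference relation forces $f$, after a modification at $0$ and passing to a sublattice, into a controlled shape — but here we additionally have the $p$-relation, which will pin down the ambiguity. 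So the first step is to invoke (the already-established baby case of) the periodicity theorem for the operator $\phi$ alone to reduce to the situation where $f$ itself is $\Lambda$-periodic off a modification at $0$, i.e. $f(z + \omega) = f(z)$ for all $z \neq 0$ and $\omega$ in some sublattice, leaving only the value $f(0)$ possibly ``wrong.''

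**Second**, with $f$ now $\Lambda$-periodic away from $0$, I would feed this into the $p$-recursion
\[
f_p(z) = \sum_{i=0}^{m} e_{m-i} f(p^{1-i} z), \qquad e_m = 1,\ e_0 \neq 0.
\]
Evaluate this identity at a point $z$ and at $z + \omega$ for $\omega \in \Lambda$. On the right side, the term with $i$ contributes $e_{m-i} f(p^{1-i}(z+\omega)) = e_{m-i} f(p^{1-i} z + p^{1-i}\omega)$. As long as $p^{1-i}\omega$ is not a lattice point landing us at $0$ — which, after shrinking $\Lambda$, we can arrange to happen only for the single argument $p^{1-i} z = 0$, i.e. $z \in p^{i-1}\Lambda$ — each such term is unchanged by $\Lambda$-periodicity of $f$ off $0$. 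Comparing the two evaluations and using that $f_p$ is genuinely $\Lambda$-periodic, the discrepancies all cancel except precisely at the finitely many ``bad'' arguments where some $p^{1-i}z = 0$. Tracking those, one gets a linear relation among the (at most $m+1$) possible ``defect'' values $\delta = \widetilde f(0) - f(0)$ at $0$, weighted by the $e_{m-i}$; since $\sum e_{m-i} \neq 0$ (indeed $e_0 \neq 0$ and $e_m = 1$), this forces a \emph{unique} admissible value of $\delta$, hence a unique $c = \widetilde f(0)$ for which the $p$-relation and $q$-relation are simultaneously compatible with $\Lambda'$-periodicity.

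**Third**, having isolated the candidate value $c$, I would verify directly that $\widetilde f$ defined by $\widetilde f(z) = f(z)$ for $z \neq 0$ and $\widetilde f(0) = c$ is $\Lambda'$-periodic on all of $\mathbb{C}$: off $0$ this is the periodicity already obtained, and at $0$ and at the lattice points $\omega \in \Lambda'$ the defining recursions, now known to hold with $\widetilde f$ at every argument (since $c$ was chosen to make the ``bad'' arguments behave), give $\widetilde f(\omega) = \widetilde f(0) = c$. Care is needed to choose $\Lambda' \subset \Lambda$ small enough that: (a) the baby periodicity theorem applies, and (b) for $z \in \Lambda'$ the arguments $q z$, $p^{1-i} z$ that appear are either $0$ or again in $\Lambda'$, so no new discrete points ``migrate'' into $0$ under the operators. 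Using $(p,q) = 1$ guarantees that the $p$- and $q$-orbit structures interact cleanly and that $\mathrm{supp}(f) \setminus \{0\}$ stays away from $0$ under finitely many iterations after shrinking.

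**The main obstacle** I anticipate is Step 2: the bookkeeping of exactly which arguments $p^{1-i} z$ can equal $0$ for $z$ in the relevant lattice (the $p^{1-i}$ have $i$ ranging so that $p^{1-i}$ can be a negative power of $p$, i.e. a fraction), and verifying that the resulting linear system in the single unknown defect value is nondegenerate precisely because $e_0 \neq 0$ and $e_m = 1$ — i.e. that the ``total weight'' $\sum_{i=0}^m e_{m-i}$ that multiplies the defect is nonzero, or if it happens to vanish, that a finer look at the cascade of arguments still pins $\delta$ down. I would handle the possible vanishing of $\sum e_i$ by iterating the $p$-relation (composing it with itself), which replaces the weight polynomial $\sum e_i X^i$ by its powers and lets me choose an iterate whose total weight is nonzero, exploiting $e_0 \neq 0$ so the polynomial is not divisible by $X$ and $e_m = 1$ so it is not identically zero; this is the one place where the precise hypotheses on the $e_i$ are used in an essential way.
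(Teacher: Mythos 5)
Your Step 1 has a fundamental gap. You claim that the ``baby'' periodicity theorem of \cite{dS1} can be invoked \emph{for the operator $\phi$ alone}, i.e.\ that the single $q$-difference relation $f_q(z) = f(qz) - f(z)$ with $f_q$ elliptic already forces $f$ (after modification at $0$ and shrinking $\Lambda$) to be $\Lambda$-periodic off $0$. This is false, and it is not what Theorem~1.1 of \cite{dS1} says. That theorem concerns a pair of simultaneous relations (the $q$-relation and the simplest $p$-relation $f_p(z)=f(pz)-f(z)$), and its conclusion depends essentially on the \emph{interplay} between the two: the inclusion $\mathrm{supp}(f)\setminus\{0\}\subset \bigcup_\nu p^\nu \widetilde S_p \cap \bigcup_\nu q^\nu \widetilde S_q$ (equation (4.2) in the paper) is what makes the image $S$ of $\mathrm{supp}(f)$ in $\mathbb{C}/\Lambda$ finite, and that finiteness is the engine behind the periodicity argument. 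A single $q$-relation imposes only the first of the two containments, which does not bound $S$; indeed one can manufacture $f$ with elliptic $f_q$ whose support is infinite modulo every sublattice. So there is no ``periodicity from $\phi$ alone'' statement to quote, and your whole strategy of first settling periodicity off $0$ and then using the $p$-relation merely to pin down the value at $0$ collapses at the first step.

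There is a second, smaller error. You assert that $e_0\neq 0$ and $e_m=1$ imply $\sum_{i=0}^m e_{m-i}\neq 0$; they do not (e.g.\ $e_0=-1$, middle coefficients $0$, $e_m=1$). The paper explicitly notes that $\sum e_i=0$ is allowed and is precisely the case in which a modification at $0$ cannot be avoided. Your proposed repair — iterating the $p$-relation to make the total weight nonzero — also fails: iterating corresponds to multiplying the weight polynomial, so if it vanishes at $1$, every iterate does too. The actual proof does not need $\sum e_i\neq 0$ at any point; it instead uses the recursion $f(z)=\sum_{\nu\ge1} r_\nu f_p(z/p^\nu)$ (with $r_\nu$ solved for recursively, $r_1=1$), proves finiteness of the support modulo $\Lambda$ from both relations together, establishes periodicity away from torsion points (using only multiplicative independence of $p,q$), and finally treats torsion points by an argument where coprimality $(p,q)=1$ is used in an essential way. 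You should start from those three steps rather than from a nonexistent single-operator periodicity theorem.
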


Theorem 1.1 of \cite{dS1} concerned the case $f_{p}(z)=f(pz)-f(z).$
In this case, or more generally if $\sum_{i=0}^{m}e_{m-i}=0$, we
can not forgo the need to modify $f$ at 0 because if two $f$'s agree
outside 0, they yield the same $f_{p}$ and $f_{q}$. 

We shall now show how to modify the proof to treat the more general
case given here.

Observe first that for some $r_{\nu}\in\mathbb{Q}$, $r_{1}=1,$ we
have for every $z\ne0$
\begin{equation}
f(z)=\sum_{\nu=1}^{\infty}f_{q}(\frac{z}{q^{\nu}})=\sum_{\nu=1}^{\infty}r_{\nu}f_{p}(\frac{z}{p^{\nu}}).\label{eq:recursion}
\end{equation}

Formally, this is clear for the first sum, and in the second sum one
solves recursively for the $r_{\nu}$. Since all our functions are
discretely supported, for any given $z$ the infinite sums are actually
finite, and the formal identity becomes an equality.

Let $S_{p}\subset\mathbb{C}/\Lambda$ and $S_{q}\subset\mathbb{C}/\Lambda$
be the supports of $f_{p}$ and $f_{q}$ (modulo $\Lambda$). Let
$\pi_{\Lambda}:\mathbb{C}\to\mathbb{C}/\Lambda$ be the projection
and $\widetilde{S}_{p}=\pi_{\Lambda}^{-1}(S_{p}),\,\,\,\widetilde{S}_{q}=\pi_{\Lambda}^{-1}(S_{q}).$
Let $\widetilde{S}$ be the support of $f$, and $S=\pi_{\Lambda}(\widetilde{S}).$
By (\ref{eq:recursion}) we have
\begin{equation}
\widetilde{S}-\{0\}\subset\bigcup_{\nu=1}^{\infty}p^{\nu}\widetilde{S}_{p}\cap\bigcup_{\nu=1}^{\infty}q^{\nu}\widetilde{S}_{q}.\label{eq:supports}
\end{equation}

\begin{lem}
The set $S$ is finite.
\end{lem}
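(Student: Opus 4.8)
The plan is to exploit the two expansions in~(\ref{eq:recursion}) simultaneously, which force the support of $f$ (away from $0$) to lie in the intersection displayed in~(\ref{eq:supports}). Concretely, I would argue as follows. Write $\widetilde{S}_p$ as a finite union of $\Lambda$-cosets $a+\Lambda$, $a\in S_p$, and similarly $\widetilde{S}_q$ as a finite union of cosets $b+\Lambda$, $b\in S_q$. Then $\bigcup_\nu p^\nu\widetilde{S}_p = \bigcup_\nu\bigcup_{a\in S_p}(p^\nu a + p^\nu\Lambda)$ and likewise for $q$. Since we are free to shrink $\Lambda$, and since multiplication by $p$ maps $\Lambda$ into $\Lambda$ (indeed $p^\nu\Lambda\subset\Lambda$), each $p^\nu a + p^\nu\Lambda$ is contained in $p^\nu a+\Lambda$; so $\bigcup_\nu p^\nu\widetilde{S}_p\subset\bigcup_{\nu\ge1}\bigcup_{a\in S_p}(p^\nu a+\Lambda)$, a union of $\Lambda$-cosets indexed by the sequences $\{p^\nu a\bmod\Lambda\}$. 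The analogous statement holds on the $q$-side.

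The key point is then a counting/finiteness argument on $\mathbb{C}/\Lambda$: I claim the set of cosets $\{p^\nu a\bmod\Lambda: \nu\ge1,\ a\in S_p\}$ that also occur among $\{q^\mu b\bmod\Lambda:\mu\ge1,\ b\in S_q\}$ is finite. Equivalently, I want to show that for each fixed $a$ and $b$, the set of pairs $(\nu,\mu)$ with $p^\nu a\equiv q^\mu b\pmod\Lambda$ is finite — or rather, gives rise to only finitely many points of $\mathbb{C}/\Lambda$. If $p^\nu a\equiv q^\mu b$ and $p^{\nu'}a\equiv q^{\mu'}b$ with, say, $\nu'>\nu$, then $p^{\nu'}a - p^\nu a \equiv q^{\mu'}b - q^\mu b$, i.e. $p^\nu(p^{\nu'-\nu}-1)a\equiv q^\mu(q^{\mu'-\mu}-1)b\pmod\Lambda$; pushing this further, one obtains relations forcing the common value to be a torsion point of bounded order, using that $\gcd(p,q)=1$ so $p^\nu$ is invertible modulo $q^\mu$ and conversely. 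The cleanest route: a point $x\in\mathbb{C}/\Lambda$ lying in both unions satisfies $p^N x \equiv p^{N'}x$ for some $N\ne N'$ (because the sequence $p^\nu a\bmod\Lambda$, restricted to a finite set, must repeat) — wait, that is not automatic since the union is over all $\nu$. The honest statement is: if $x\in\bigcup_\nu p^\nu\widetilde{S}_p/\Lambda$ then $x = p^\nu a_0$ for some $a_0\in S_p$, $\nu\ge1$; and if also $x = q^\mu b_0$, then $x$ is killed by $[p^\nu q^\mu]^2$ times the orders of the divisors... — more precisely $x$ lies in $p^\nu S_p$ and in $q^\mu S_q$, two sets each of cardinality $\le |S_p|$, resp. $\le|S_q|$, but whose location drifts with $\nu,\mu$.

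The real mechanism is the following. Multiplication by $q$ on $\mathbb{C}/\Lambda$ is a finite surjective map, and from $f_q(z) = f(qz)-f(z)$ we get $\widetilde S - \{0\}\subset \bigcup_{\nu\ge1} q^\nu\widetilde S_q$, but also, reading the recursion the other way, $q\cdot(\widetilde S-\{0\})\subset \widetilde S\cup\widetilde S_q$ roughly, so $S$ is ``almost'' stable under multiplication by $q$, and symmetrically under multiplication by $p$. An infinite $S$ that is (up to finite error) stable under both $[p]$ and $[q]$, with $p,q$ multiplicatively independent, would force $S$ to be dense in a real subtorus, contradicting discreteness of the support. So the argument I would carry out: show $[q](S)\subset S\cup S_q$ and $[p](S)\subset S\cup S_p'$ (where $S_p'$ is the support of finitely many shifted copies of $f_p$), then bound the preimages to conclude $S\setminus(\text{finite set})$ is stable under $[q]^{-1}$ and under $[p]$ simultaneously, forcing it to be a finite set of torsion points; the Manin–Mumford-type rigidity is overkill, one just needs: a discrete $[q]$-stable subset of $\mathbb{C}/\Lambda$ is finite (since $[q]$ is expanding on a compact torus, orbits accumulate), hence finite.

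The step I expect to be the main obstacle is making precise the ``$S$ is stable up to a finite set under both $[p]^{\pm1}$ and $[q]^{\pm1}$'' claim with the correct bookkeeping of which finitely many exceptional points $S_p,S_q$ and their orbits contribute, and then invoking the correct elementary fact (a nonempty discrete subset of a compact torus stable under an expanding endomorphism such as $[q]$ cannot be infinite) to kill the infinite part. Everything else — the coset decomposition, the finiteness of $S_p,S_q$ themselves (they are supports of genuine divisors on $E_\Lambda$, hence finite), and the two inclusions coming from the defining relations — is routine. I would also remark that, since after the lemma one wants periodicity of a modification of $f$, the finiteness of $S$ here is exactly what lets one pass to a sublattice $\Lambda'$ containing representatives of all of $\widetilde S/\Lambda$ outside $0$ and rerun the original argument of~\cite{dS1}.
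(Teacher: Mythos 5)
Your second argument --- the one you label ``the real mechanism'' --- has a genuine gap. It hinges on $S=\pi_{\Lambda}(\widetilde{S})$ being a discrete subset of $\mathbb{C}/\Lambda$, which you then combine with near-stability under $[p]$, $[q]$ and the expanding nature of these endomorphisms to force finiteness. But discreteness of $S$ is not among the hypotheses. The membership $f\in\mathscr{D}$ only says that $\widetilde{S}=\mathrm{supp}(f)$ is discrete in the \emph{plane} $\mathbb{C}$; this places no constraint at all on its image in the \emph{compact} quotient $\mathbb{C}/\Lambda$, which can perfectly well be dense. Since a closed discrete subset of a compact group is automatically finite, ``$S$ is discrete'' is, in essence, a restatement of the conclusion of the lemma, and invoking it is circular. (There are also minor sign/bookkeeping slips in the ``near-stability'' inclusions, e.g. $q(\widetilde{S}\setminus\widetilde{S}_{q})\subset\widetilde{S}$ is the correct form, not $q\widetilde{S}\subset\widetilde{S}\cup\widetilde{S}_{q}$, but these are secondary to the discreteness issue.)

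Your first, abandoned line of attack is the right one, and the step you left as ``pushing this further'' can actually be completed without any use of $\gcd(p,q)=1$. Fix $a\in S_{p}$, $b\in S_{q}$ and suppose $p^{\nu_{1}}a\equiv q^{\mu_{1}}b$ and $p^{\nu_{2}}a\equiv q^{\mu_{2}}b\pmod{\Lambda}$. Multiplying the first congruence by $q^{\mu_{2}}$, the second by $q^{\mu_{1}}$, and subtracting eliminates $b$ and yields $\left(q^{\mu_{2}}p^{\nu_{1}}-q^{\mu_{1}}p^{\nu_{2}}\right)a\in\Lambda$. If $a$ is non-torsion in $\mathbb{C}/\Lambda$, this forces $q^{\mu_{2}}p^{\nu_{1}}=q^{\mu_{1}}p^{\nu_{2}}$, hence $(\nu_{1},\mu_{1})=(\nu_{2},\mu_{2})$ by multiplicative independence of $p$ and $q$, so the pair $(a,b)$ contributes at most one point. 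If $a$ is torsion of order $N$, every $p^{\nu}a\bmod\Lambda$ is an $N$-torsion point, so the pair contributes at most $N$ points. Summing over the finitely many pairs $(a,b)\in S_{p}\times S_{q}$ shows that $\bigl(\bigcup_{\nu\ge1}[p^{\nu}](S_{p})\bigr)\cap\bigl(\bigcup_{\mu\ge1}[q^{\mu}](S_{q})\bigr)$ is finite; by (\ref{eq:supports}) this set contains $S\setminus\{0\}$. Note that this uses only multiplicative independence, in accordance with the remark the paper appends to its own (citation-only) proof --- the appeal to coprimality you gesture at is a detour.
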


\begin{proof}
See Lemma 2.3 of \cite{dS1}. It is enough to assume here that $p$
and $q$ are multiplicatively independent.
\end{proof}
\begin{lem}
Let $z\in\widetilde{S}_{q},$ $z\notin\mathbb{Q}\Lambda,$ and let
$n_{q}(z)$ be the largest $n\ge0$ such that $q^{n}z\in\widetilde{S}_{q}$
(it exists since $S_{q}$ is finite and the points $q^{n}z$ have
distinct images modulo $\Lambda$). Note that $n_{q}(z)=n_{q}(z+\lambda)$
for $\lambda\in\Lambda$ so that
\[
n_{q}=1+\max_{z\in\widetilde{S}_{q},\,\,z\notin\mathbb{Q}\Lambda}n_{q}(z)
\]
exists. Then
\[
f(z+q^{2n_{q}}\lambda)=f(z)
\]
for every $z\notin\mathbb{Q}\Lambda$ and $\lambda\in\Lambda$.
\end{lem}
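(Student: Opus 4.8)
The plan is to exploit the structure of the relation $f_q(z)=f(qz)-f(z)$, which telescopes: for any $z\notin\mathbb{Q}\Lambda$ we have $f(z)=\sum_{\nu\ge1}f_q(z/q^{\nu})$ (a finite sum), so changing $z$ by a period $\lambda\in\Lambda$ changes $f(z)$ in a controlled way. First I would compare $f(z+q^N\lambda)$ with $f(z)$ for a large exponent $N$. Writing $f(z+q^N\lambda)=\sum_{\nu\ge1}f_q((z+q^N\lambda)/q^\nu)$ and $f(z)=\sum_{\nu\ge1}f_q(z/q^\nu)$, the two series have matching tails: for $\nu>N$ the shift $q^{N-\nu}\lambda$ lies in $q^{-1}\Lambda\subset\mathbb{Q}\Lambda$, but more to the point, for $\nu>N$ we want to use $\Lambda$-periodicity of $f_q$ together with the fact that $q^{N}\lambda/q^{\nu}=q^{N-\nu}\lambda$. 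The clean way: for $\nu\le N$, $q^{N}\lambda/q^{\nu}=q^{N-\nu}\lambda\in\Lambda$, so by $\Lambda$-periodicity of $f_q$ those terms are unchanged, i.e. $f_q((z+q^{N}\lambda)/q^\nu)=f_q(z/q^\nu)$. Hence
\[
f(z+q^{N}\lambda)-f(z)=\sum_{\nu>N}\Bigl(f_q\bigl(\tfrac{z}{q^\nu}+q^{N-\nu}\lambda\bigr)-f_q\bigl(\tfrac{z}{q^\nu}\bigr)\Bigr).
\]
So the difference only involves values of $f_q$ at points of $q^{-1}\widetilde S_q,q^{-2}\widetilde S_q,\dots$ shifted by small multiples of $\lambda$.

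Next I would bound how many $\nu>N$ can contribute. If $f_q(z/q^\nu+q^{N-\nu}\lambda)\ne0$ then $z/q^\nu+q^{N-\nu}\lambda\in\widetilde S_q$; multiplying by $q^\nu$ gives $z+q^{N}\lambda\in q^\nu\widetilde S_q$, equivalently $q^{-\nu}(z+q^N\lambda)\in\widetilde S_q$. Since $z+q^N\lambda\notin\mathbb{Q}\Lambda$ (because $z\notin\mathbb{Q}\Lambda$), the points $q^{-\nu}(z+q^N\lambda)$ for varying $\nu$ have pairwise distinct images mod $\Lambda$, so finiteness of $S_q$ forces only finitely many such $\nu$, all bounded by $n_q(z+q^N\lambda)$ in the notation of the lemma. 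By definition of $n_q$ (the bound being uniform over all points of $\widetilde S_q$ not in $\mathbb{Q}\Lambda$), every contributing index satisfies $\nu-N\le n_q-1$, i.e. $\nu\le N+n_q-1$; a symmetric argument applied to $f_q(z/q^\nu)$ itself shows the surviving $\nu$ on the other side also lie in a bounded window above $N$. Thus if I choose $N=n_q$ to begin with — wait, the statement wants $q^{2n_q}\lambda$, so the right choice is $N=2n_q$: then the window of contributing indices is $2n_q<\nu\le 2n_q+n_q-1<3n_q$, and for such $\nu$ both $z/q^\nu$ and $z/q^\nu+q^{2n_q-\nu}\lambda$ have the property that multiplying by $q^{n_q}$ lands them back... actually the cleanest route is: for $\nu>2n_q$, the shift $q^{2n_q-\nu}\lambda$ satisfies $q^{\nu-2n_q}\mid$ denominators in a way that, together with $n_q$ being an upper bound on consecutive hits, kills the term. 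I would make this precise by showing both $f_q(z/q^\nu+q^{2n_q-\nu}\lambda)$ and $f_q(z/q^\nu)$ vanish for every $\nu>2n_q$: if the first were nonzero, then $q^{-(\nu-2n_q)}(z+q^{2n_q}\lambda)$ lies in $\widetilde S_q\cap(\text{a }\mathbb{Z}\text{-orbit under mult.\ by }q)$, and applying mult.\ by $q$ enough times contradicts maximality encoded in $n_q$; similarly for the second, using that $q^{\nu}z\in\widetilde S_q$ contradicts $n_q(z)<n_q$. Hence the entire sum is $0$ and $f(z+q^{2n_q}\lambda)=f(z)$.

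I expect the main obstacle to be the bookkeeping in the previous paragraph: pinning down exactly why the exponent $2n_q$ (rather than $n_q$ or $3n_q$) is the correct one, and getting the two vanishing statements — for the shifted and unshifted terms — to hold simultaneously for \emph{all} $\nu$ beyond the cutoff, uniformly in $z\notin\mathbb{Q}\Lambda$ and $\lambda\in\Lambda$. The subtlety is that $n_q(z)$ depends on $z$, and one must pass to the uniform bound $n_q$; the shift by $q^{2n_q}\lambda$ can move a point into or out of $\widetilde S_q$, so one has to control $n_q$ of the shifted point, not just of $z$. The device that makes it work is precisely that $n_q(w)=n_q(w+\lambda)$ for $\lambda\in\Lambda$, together with the observation that multiplication by $q^\nu$ for $\nu>2n_q$ carries the shifted argument $z/q^\nu+q^{2n_q-\nu}\lambda$ to $z+q^{2n_q}\lambda$, whose $q$-orbit in $\widetilde S_q$ has length $<n_q$, forcing $\nu-2n_q<n_q-n_q=0$ after accounting for the $2n_q$ extra divisions — a contradiction. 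Once this is arranged, the identity $f(z+q^{2n_q}\lambda)=f(z)$ is immediate, and this is exactly the claim. I would also remark that the hypothesis $z\notin\mathbb{Q}\Lambda$ is used only to guarantee the points $q^{-\nu}(z+q^{2n_q}\lambda)$ are pairwise inequivalent mod $\Lambda$, which is what makes all these sums finite and the orbit-length arguments valid.
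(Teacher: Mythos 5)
Your approach has a genuine gap, and it is not merely a bookkeeping issue. The pivotal claim---that all contributing indices $\nu$ (those with $f_q(z/q^\nu)\ne 0$, or with the shifted argument in $\widetilde S_q$) lie in a window such as $(2n_q,3n_q)$, uniformly in $z$, so that both terms in the $\nu>2n_q$ tail vanish---is false. What you can correctly extract is that the set $A(z)=\{\nu\ge 1: z/q^\nu\in\widetilde S_q\}$ has \emph{span} at most $n_q-1$: if $\nu_1<\nu_2\in A(z)$, set $w=z/q^{\nu_2}\in\widetilde S_q$; then $q^{\nu_2-\nu_1}w=z/q^{\nu_1}\in\widetilde S_q$ forces $\nu_2-\nu_1\le n_q(w)\le n_q-1$. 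But the \emph{location} of this window is unbounded. If $w\in\widetilde S_q\setminus\mathbb Q\Lambda$ and $z=q^{1000}w$, then $1000\in A(z)$ regardless of $n_q$. Your vanishing argument (``applying mult.\ by $q$ enough times contradicts the maximality encoded in $n_q$'') cannot be made to work: a single membership $z/q^\nu\in\widetilde S_q$ gives no information about $n_q(z)$, which is not even defined unless $z$ itself lies in $\widetilde S_q$, and the indices in $A(z)$ are not individually bounded by any expression in $n_q$ alone.

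More decisively, your proposal uses only the relation $f_q(z)=f(qz)-f(z)$, and the lemma is \emph{not true} under that hypothesis alone. Take $w_0\notin\mathbb Q\Lambda$ and let $f$ be the indicator function of $\bigcup_{\nu\ge 1}q^\nu(w_0+\Lambda)$ (a disjoint union, since $w_0\notin\mathbb Q\Lambda$, with discrete support). Then $f_q$ is the indicator of $w_0+\Lambda$, which is $\Lambda$-periodic, with $\widetilde S_q=w_0+\Lambda$ and $n_q=1$. Yet for $z=q^3w_0$ and $\lambda$ a primitive vector of $\Lambda$ one has $f(z)=1$ but $f(z+q^2\lambda)=0$ (checking all $\nu$: for $\nu\ne 3$ membership would force $w_0\in\mathbb Q\Lambda$, and for $\nu=3$ it would force $\lambda/q\in\Lambda$). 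So the conclusion fails. The paper's proof explicitly ``relies on the previous [lemma]'', i.e.\ on the finiteness of $S=\pi_\Lambda(\mathrm{supp}\,f)$, which is precisely where the $f_p$-relation and the multiplicative independence of $p$ and $q$ enter. Your proposal never invokes $f_p$ nor the finiteness of $S$; that is the essential missing ingredient, and without it the statement is simply false, so no refinement of the window bookkeeping can close the gap.
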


\begin{proof}
The proof preceding Proposition 2.4 in \cite{dS1} holds, word for
word, except that the $P$ there is our $q$ here. Thus, away from
torsion points of the lattice, $f$ is $q^{2n_{q}}\Lambda$-periodic.
The proof of this Lemma, which relies on the previous one, still assumes
only the multiplicative independence of $p$ and $q$.
\end{proof}
We now treat torsion points $z\in\mathbb{Q}\Lambda$, for which we
have to assume $(p,q)=1.$ We may assume, without loss of generality,
that $f,$ hence $f_{p}$ and $f_{q}$, are supported on $\mathbb{Q}\Lambda$,
because away from $\mathbb{Q}\Lambda$ we have already proved periodicity,
so we may subtract the part of $f$ supported on non-torsion points
from the original $f$ without affecting the hypotheses.

Since $S$ is finite we may rescale $\Lambda$ and assume that $f$
is supported on $pq$$\Lambda.$ Then $f_{p}$ is supported on $q\Lambda$
and $f_{q}$ on $p\Lambda$, as becomes evident from (\ref{eq:recursion}).
\begin{lem}
If both $f_{p}$ and $f_{q}$ are $N\Lambda$-periodic, then so is
a suitable modification of $f$ at 0.
\end{lem}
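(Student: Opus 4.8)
The plan is to reduce the Lemma to the assertion that for every $\omega\in N\Lambda$ the translation difference
\[
g_{\omega}(z):=f(z+\omega)-f(z)
\]
is supported in $\{0,-\omega\}$, and then to prove that assertion by the support bookkeeping of \cite{dS1}, adapted to the more general relation for $f_{p}$. (We may first replace $N$ by a multiple so that $pq\mid N$; this only passes to a sublattice, which is harmless for the conclusion and for applying the Lemma inside Proposition \ref{prop:periodicity theorem}.)

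The reduction is elementary. Assume $\mathrm{supp}(g_{\omega})\subseteq\{0,-\omega\}$ for all $\omega\in N\Lambda$. If $z_{0}\notin N\Lambda$ then for all $\lambda,\lambda'\in N\Lambda$ one has $z_{0}+\lambda\neq 0$ and $z_{0}+\lambda\neq -(\lambda'-\lambda)$ (the latter would force $z_{0}\in N\Lambda$), so $g_{\lambda'-\lambda}(z_{0}+\lambda)=0$, i.e. $f$ is constant on the coset $z_{0}+N\Lambda$. Evaluating $g_{\lambda'-\lambda}$ at a point $\lambda\in N\Lambda\setminus\{0\}$ shows $f(\lambda')=f(\lambda)$ for all nonzero $\lambda,\lambda'\in N\Lambda$; call this common value $c$. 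Then $\widetilde f$ defined by $\widetilde f(0):=c$ and $\widetilde f=f$ off $0$ is $N\Lambda$-periodic: this is immediate on cosets $\neq N\Lambda$ and on $N\Lambda$ one checks the three cases $\lambda=0$, $\lambda+\omega=0$, $\lambda,\lambda+\omega\neq 0$ directly. This part is exactly the endgame of \cite{dS1}.

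It remains to prove $f(z+\omega)=f(z)$ for $z\neq 0,-\omega$. By the periodicity already established away from the torsion points we may assume $f,f_{p},f_{q}$ are supported on the sublattices $pq\Lambda,\,q\Lambda,\,p\Lambda$. For each fixed $z$ the two expansions
\[
f(z)=\sum_{\nu\geq 1}f_{q}(z/q^{\nu})=\sum_{\nu\geq 1}r_{\nu}f_{p}(z/p^{\nu}),\qquad r_{1}=1,
\]
are finite sums, so $g_{\omega}(z)=\sum_{\nu\geq 1}\bigl(f_{q}(z/q^{\nu}+\omega/q^{\nu})-f_{q}(z/q^{\nu})\bigr)$. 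The difficulty, flagged already in \cite{dS1}, is that the shifts $\omega/q^{\nu}$ are not in $N\Lambda$. One removes it by writing $z=pq\mu$, comparing the $q$-adic valuations of $\mu$ and of $\omega/(pq)$, and using that multiplication by $q$ is invertible on the $p$-power torsion --- here $(p,q)=1$ is essential --- to match up the finitely many nonzero terms; the $q$-periodicity of $f_{q}$ (as a function on $p\Lambda$, it is $(N/p)\Lambda$-periodic once $p\mid N$) then forces the matched terms to cancel, and the residue corresponds exactly to $z\in\{0,-\omega\}$. The symmetric argument with the $p$-expansion, again using $(p,q)=1$, disposes of the remaining torsion. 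If a residual discrepancy in the $q$-power (resp. $p$-power) part of $\omega$ survives, one absorbs it by passing to a sublattice of $N\Lambda$ and runs a short downward induction on the $q$- and $p$-valuations that occur in $\mathrm{supp}(f)$; these are bounded because $S$ is finite and because of the quantity $n_{q}$ produced by the previous lemma.

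The only genuinely new ingredient, compared with \cite{dS1}, is the presence of the coefficients $r_{\nu}$: they are no longer all $1$ but are the solution, normalized by $r_{1}=1$, of the linear recursion with characteristic polynomial $e_{0}X^{m}+\cdots+e_{m-1}X+e_{m}$. Since $e_{m}=1$ the relation $f(pz)=f_{p}(z)-\sum_{i=1}^{m}e_{m-i}f(p^{1-i}z)$ can be iterated upward, and since $e_{0}\neq 0$ it can be inverted to run downward; this is all the support bookkeeping requires, so the counting arguments of \cite{dS1} transcribe verbatim (the leading term always carries the nonzero coefficient $r_{1}=1$, and the vanishing/matching of terms does not depend on the values of the remaining $r_{\nu}$). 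I expect the main obstacle to be the torsion case in the preceding paragraph --- precisely the matching of the shifted sums and the attendant, possibly inductive, rescaling of $\Lambda$; everything else is either elementary or a direct translation of \cite{dS1}.
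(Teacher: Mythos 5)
Your proposal takes essentially the same route as the paper, which simply cites Proposition 2.2 of \cite{dS1} with the relation (\ref{eq:recursion}) substituted for the simpler one used there. Your opening reduction — that the Lemma is equivalent to showing each translation difference $g_{\omega}(z)=f(z+\omega)-f(z)$, $\omega\in N\Lambda$, is supported in $\{0,-\omega\}$ — is correct and cleanly checked, though note it is an exact reformulation of the conclusion rather than a genuine reduction (the converse implication is just as immediate). You also correctly isolate the single new feature relative to \cite{dS1}: the presence of the inverted coefficients $r_{\nu}$ in the $p$-side expansion of (\ref{eq:recursion}), and your observation that the bookkeeping only uses $r_{1}=1$ together with the possibility of running the recursion in both directions (from $e_{m}=1$ and $e_{0}\neq 0$) is consistent with the paper's claim that the argument ``works the same''.

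The weak point is the middle paragraph, which you yourself flag as the likely obstacle. The description of the term-matching — writing $z=pq\mu$, comparing $q$-adic valuations, invoking invertibility of $q$ on $p$-power torsion, and then a ``possibly inductive rescaling of $\Lambda$'' to absorb residual discrepancies — is a plausibility sketch rather than an argument: it does not pin down which sublattice one lands in, nor verify that the cancellation really is term-by-term (the assertion that the specific values of $r_{\nu}$ are irrelevant rests on exactly this). Since the paper's own proof is a one-line deferral to \cite{dS1}, your proposal is at least as explicit; but to count as a proof you would need to spell out the shift-matching precisely, or quote the relevant steps of dS1's Proposition 2.2 and check, line by line, that replacing $f_{p}(z)=f(pz)-f(z)$ by the general relation and $1$ by $r_{\nu}$ does not break any of them.
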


\begin{proof}
The proof of Proposition 2.2 in \cite{dS1} works the same, substituting
the relations (\ref{eq:recursion}) for the relations used there.
\end{proof}
This concludes the proof of Proposition \ref{prop:periodicity theorem}.

\section{Proof of the main theorems}

\subsection{Deduction of Theorem \ref{thm:Main Theorem A} from Theorem \ref{thm:Main Theorem B}}

From now on we let $(K,F,\phi,\psi)$ be as in the case (2Ell), assuming
that $2\le p,q,\,\,\,(p,q)=1.$ Assume that Theorem \ref{thm:Main Theorem B}
is proven, and let $f$ and $g$ be as in Theorem \ref{thm:Main Theorem A}.
Let $n$ be the first integer such that
\[
\psi^{n}(g)\in K(g,\psi(g),\dots,\psi^{n-1}(g)).
\]
Clearly all the $\psi^{i}(g),$ $i\ge n,$ also belong there.

If $g$ were algebraic over $K$, so would be all the $\psi^{i}(g),$
and the field
\[
K(g)_{\psi}=K(g,\psi(g),\dots,\psi^{n-1}(g))
\]
would be a finite extension of $K$ to which $\psi$ extends as an
endomorphism. In fact, since $\psi$ is an automorphism of $K$ and
$[K(g)_{\psi}:K]<\infty,$ it would be an \emph{automorphism} of $K(g)_{\psi}$.
By Proposition \ref{prop:Properties of K}(iii) this is impossible.
Hence $g$ is transcendental over $K$.

Suppose $f$ and $g$ were algebraically dependent over $K.$ Then
this dependence must involve $f$, hence $f$ is algebraic over $K(g)_{\psi},$
and so would be all the $\psi^{i}(f).$ It follows that
\[
\mathrm{tr.deg.}(K(f,g)_{\psi}/K)=\mathrm{tr.deg.}(K(g)_{\psi}/K)\le n<\infty.
\]
A fortiori,
\[
\mathrm{tr.deg.}(K(f)_{\psi}/K)<\infty,
\]
contradicting the conclusion of Theorem \ref{thm:Main Theorem B}.

\subsection{First order equations}

Consider the difference equation
\begin{equation}
\phi(y)=ay\label{eq:order one}
\end{equation}
with $a\in K^{\times}.$ The associated $\phi$-module is $M=Ke$
with $\Phi(e)=a^{-1}e.$ Let $L_{\psi}$ be a PPV extension for $M$,
and $u\in L_{\psi}$ a solution: $\phi(u)=au.$ Replacing $\phi$
and $\psi$ by some powers $\phi^{r}$ and $\psi^{s}$ we may assume
that $L_{\psi}$ is a field. Indeed, let
\[
L_{\psi}=L_{1}\times\cdots\times L_{r}
\]
be the decomposition of the $\phi$-pseudofield $L_{\psi}$ into a
product of fields. Then $\phi^{r}$ is an endomorphism of $L_{1},$
and some power $\psi^{s}$ of $\psi$ must also preserve it, and induces
an endomorphism of $L_{1}$. The subfield of $L_{1}$ generated by
(the projection of) $u$ and all its $\psi^{s}$-transforms is a PPV
extension for $M$ as a $\phi^{r}$-module, which is stable by $\psi^{s}.$

Assume therefore that $L_{\psi}$ is a field.
\begin{prop}
\label{prop:order one equations}The following are equivalent:

(i) $u$ is $\psi$-algebraic over $K,$ i.e. $\mathrm{tr.deg.}(L_{\psi}/K)<\infty.$

(ii) $u$ satisfies an equation $\psi(u)=\widetilde{a}u$ for some
$\widetilde{a}\in K^{\times}.$

(iii) The $\phi$-module $M$ descends to $\mathbb{C}:$ there exist
$b\in K^{\times}$and $c\in\mathbb{C}^{\times}$ such that
\[
a=c\frac{\phi(b)}{b}.
\]
\end{prop}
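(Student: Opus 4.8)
The plan is to prove the cycle (iii)$\Rightarrow$(ii)$\Rightarrow$(i)$\Rightarrow$(iii), since the first two implications are essentially formal and it is the last one — extracting the descent datum from the finiteness of the $\psi$-transcendence degree — that carries the real content.

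First I would establish (iii)$\Rightarrow$(ii). If $a=c\,\phi(b)/b$ with $b\in K^\times$, $c\in\mathbb{C}^\times$, then $v=bu$ satisfies $\phi(v)=cv$, so $v$ lies in a $\phi$-module that descends to $\mathbb{C}$; in particular $\psi(v)/v$ is $\phi$-invariant, hence lies in $L_\psi^\phi=C=\mathbb{C}$, say $\psi(v)=c'v$ with $c'\in\mathbb{C}^\times$. Unwinding, $\psi(u)=\psi(v)/\psi(b)=c'v/\psi(b)=c'(b/\psi(b))u$, so $\widetilde a=c'b/\psi(b)\in K^\times$, giving (ii). The implication (ii)$\Rightarrow$(i) is immediate: if $\psi(u)=\widetilde a u$ then every $\psi^k(u)$ is a $K$-multiple of $u$, so $L_\psi=K(u)$ has transcendence degree $\le 1$ over $K$, which is finite.

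The substantive step is (i)$\Rightarrow$(iii). Here I would bring in the parametrized Galois group $G\subset[\psi]\mathbb{G}_m$ of the rank-one module $M$ (its classical Galois group $\mathcal{G}$ is a closed subgroup of $\mathbb{G}_{m,\mathbb{C}}$, hence either all of $\mathbb{G}_m$ or finite; and $G$ is Zariski dense in $\mathcal{G}$). Finiteness of $\psi\mathrm{tr.deg.}(L_\psi/K)=\psi\dim(G)$ forces $G\subsetneq[\psi]\mathbb{G}_m$: indeed $\psi\dim([\psi]\mathbb{G}_m)$ is not zero, so $G$ is a proper $\psi$-closed subgroup, and by Lemma~\ref{lem:subgruops G_m} it satisfies a nontrivial $\psi$-monomial relation $X^{e_0}\psi(X)^{e_1}\cdots\psi^m(X)^{e_m}=1$ and has $\psi\dim(G)=0$. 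Translating this relation through the identification $\sigma\mapsto V(\sigma)$ and the compatibilities $\sigma(\psi^i(u))=\psi^i(u)\psi^i(V(\sigma))$, it says precisely that the element $w=u^{e_0}\psi(u)^{e_1}\cdots\psi^m(u)^{e_m}\in L_\psi^\times$ is fixed by every element of the parametrized Galois group, hence $w\in K^\times$ by the Galois correspondence $L_\psi^G=K$. Now $w$ satisfies $\phi(w)=a^{e_0}\psi(a)^{e_1}\cdots\psi^m(a)^{e_m}\cdot w$, so $a^{e_0}\psi(a)^{e_1}\cdots\psi^m(a)^{e_m}=\phi(w)/w\in\phi(K^\times)/K^\times$.

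It remains to pass from this $\psi$-monomial relation on $a$ (modulo $\phi$-coboundaries) to an honest relation $a=c\,\phi(b)/b$. This is where I expect the main obstacle, and where the ``baby'' periodicity theorem — Proposition~\ref{prop:periodicity theorem} — enters. The idea is to read the relation at the level of divisors: writing $a=z^{n_0}\cdot(\text{elliptic})$ on some $E_\Lambda$, the ``multiplicative'' relation on $a$ becomes an additive relation on $\mathrm{div}(a)\in\mathscr{D}_\Lambda$ of exactly the shape handled by Proposition~\ref{prop:periodicity theorem}: the $\phi$-part contributes $f_q(z)=f(qz)-f(z)$ and the $\psi$-monomial contributes $f_p(z)=\sum_{i=0}^m e_{m-i}f(p^{1-i}z)$, where $f$ is the divisor of a putative primitive $b$. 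Solving the recursion \eqref{eq:recursion} produces a candidate $f$, a priori only a discretely-supported $\mathbb{Q}$-divisor on $\mathbb{C}$; the periodicity theorem says that, after passing to a sublattice and modifying at $0$, $f$ is $\Lambda$-periodic, i.e. is a genuine divisor on an elliptic curve, hence (using that $K$ is a union of the $K_\Lambda$, and principality of degree-zero divisors on $E_\Lambda$, after a further finite adjustment to kill the residual constant/torsion obstruction) the divisor of some $b\in K^\times$. Then $a/(\phi(b)/b)$ is a unit in the relevant sense — an elliptic function with empty divisor on $\mathbb{C}^\times$, modulo the $z$-power bookkeeping — and one checks it must be a nonzero constant $c$, giving $a=c\,\phi(b)/b$. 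The care needed is exactly in the bookkeeping of the $z$-powers and the modification at $0$, and in confirming that the hypotheses of Proposition~\ref{prop:periodicity theorem} (in particular the constraint $e_m=1$, $e_0\ne 0$, which is met after normalizing the $\psi$-monomial) are genuinely satisfied.
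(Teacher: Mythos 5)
Your proposal follows essentially the same route as the paper: the formal implications (iii)$\Rightarrow$(ii)$\Rightarrow$(i), and then (i)$\Rightarrow$(iii) via the parametrized Galois group $G\subset[\psi]\mathbb{G}_m$, Lemma~\ref{lem:subgruops G_m} to extract a $\psi$-monomial relation, the Galois correspondence $L_\psi^G=K$ to descend $\mu(u)$ to $K^\times$, and finally the passage to divisors and Proposition~\ref{prop:periodicity theorem} plus Abel--Jacobi to produce $b$. One small slip in (iii)$\Rightarrow$(ii): with $a=c\,\phi(b)/b$ one should set $v=u/b$, not $v=bu$, to get $\phi(v)=cv$ (with $v=bu$ one obtains $\phi(v)=c\,\phi(b)^2b^{-1}u\neq cv$ in general); after that correction $\psi(u)=c'(\psi(b)/b)\,u$ and the rest of that step goes through as you intended.
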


\begin{cor}
If $ord_{0}(a)\ne0$ then $u$ is $\psi$-transcendental over $K$.
\end{cor}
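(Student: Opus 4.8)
The plan is to prove the Corollary by contraposition: if $u$ is $\psi$-algebraic over $K$, I will show $ord_0(a)=0$. By Proposition \ref{prop:order one equations}, the hypothesis that $u$ is $\psi$-algebraic is equivalent to condition (iii), namely the existence of $b\in K^\times$ and $c\in\mathbb{C}^\times$ with $a=c\,\phi(b)/b$. Since $ord_0$ is a valuation on $K$ (the order of vanishing at $z=0$ of the Laurent expansion) and $c\in\mathbb{C}^\times$ has $ord_0(c)=0$, we get $ord_0(a)=ord_0(\phi(b))-ord_0(b)$. So the whole argument reduces to computing $ord_0(\phi(b))$ in terms of $ord_0(b)$.

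The key observation is that $\phi$ acts on $F=\mathbb{C}((z))$ by $z\mapsto qz$, which is an invertible linear change of variable fixing $z=0$. Hence if $b=\sum_{k\ge N}c_k z^k$ with $c_N\ne0$, then $\phi(b)=\sum_{k\ge N}c_k q^k z^k$, and since $q\ne0$ the leading coefficient $c_N q^N$ is again nonzero. Therefore $ord_0(\phi(b))=ord_0(b)$, and consequently $ord_0(a)=0$. Contrapositively, if $ord_0(a)\ne0$ then (iii) fails, so by Proposition \ref{prop:order one equations} $u$ cannot be $\psi$-algebraic over $K$, i.e. $\mathrm{tr.deg.}(L_\psi/K)=\infty$, which is precisely the assertion that $u$ is $\psi$-transcendental over $K$.

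There is essentially no obstacle here; the only point requiring a word of care is that one must know $b$ genuinely lies in $K\subset F$ so that $ord_0(b)$ is a finite integer and $\phi(b)$ is computed by the substitution $z\mapsto qz$ on its Laurent expansion at $0$ — but this is built into the setup of case (2Ell), where elements of $K$ are $\Lambda$-periodic meromorphic functions expanded at $0$ via Taylor--Maclaurin, and $\phi$ is the automorphism $f(z)\mapsto f(qz)$. One might also remark that the same computation shows $ord_0$ is $\phi$-invariant on all of $K$, which is the conceptual reason the equation $a=c\,\phi(b)/b$ forces $ord_0(a)=0$: the right-hand side is a product of a constant and a $\phi$-coboundary, and $ord_0$ kills both factors. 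This makes the Corollary an immediate consequence of Proposition \ref{prop:order one equations}.
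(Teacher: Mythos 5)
Your proof is correct and follows exactly the same route as the paper's one-line argument ("In this case, (iii) can not hold, so (i) can not hold either"); you have simply made explicit the computation that $\phi$, acting by $z\mapsto qz$ on Laurent expansions, preserves $\mathrm{ord}_0$, which is what the paper leaves to the reader.
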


\begin{proof}
In this case, (iii) can not hold, so (i) can not hold either.
\end{proof}
In \cite{dS1} we proved $(ii)\Rightarrow(iii),$ but we shall not
need this step here. Clearly $(ii)\Rightarrow(i)$, and $(iii)\Rightarrow(ii)$
because if (iii) holds we may assume, replacing $u$ by $u/b$, that
$a\in\mathbb{C}^{\times}.$ Then
\[
(u^{\psi-1})^{\phi-1}=(u^{\phi-1})^{\psi-1}=a^{\psi-1}=1,
\]
so $\widetilde{a}=u^{\psi-1}\in L_{\psi}^{\phi}=\mathbb{C}$, and
(ii) holds. (If we did not assume $a\in\mathbb{C}^{\times}$ we would
only get $\widetilde{a}\in K^{\times}$.) We shall now prove $(i)\Rightarrow(iii)$.
\begin{proof}
Let $G$ be the $\psi$-Galois group of (\ref{eq:order one}). It
is a $\psi$-closed subgroup of $[\psi]\mathbb{G}_{m}.$ If $R_{\psi}=K[u,u^{-1}]_{\psi}\subset L_{\psi}$
is the PPV ring then for any $B\in\mathrm{Alg}_{\mathbb{C}}^{\psi}$
and $\sigma\in G(B)$ we embed $\sigma\mapsto v_{\sigma}\in B^{\times}$
where $\sigma(u)=uv_{\sigma}.$

Assume that $u$ is $\psi$-algebraic. Then
\[
\psi\dim(G)=\psi\mathrm{tr.deg.}(L_{\psi}/K)=0<1=\psi\dim([\psi]\mathbb{G}_{m}),
\]
so $G$ is a proper closed $\psi$-subgroup of $[\psi]\mathbb{G}_{m}.$
It follows from Lemma \ref{lem:subgruops G_m} that there is a $\psi$-monomial
relation
\[
\mu(v_{\sigma})=v_{\sigma}^{e_{0}}\psi(v_{\sigma})^{e_{1}}\cdots\psi^{m}(v_{\sigma})^{e_{m}}=1
\]
($e_{i}\in\mathbb{Z},$ $e_{m}\ne0$) that holds for all $\sigma\in G$.

Let $B\in\mathrm{Alg}_{\mathbb{C}}^{\psi}$ and $\sigma\in G(B).$
Then
\[
\sigma(\mu(u))=\mu(\sigma(u))=\mu(uv_{\sigma})=\mu(u)
\]
so $b'=\mu(u)\in L_{\psi}^{G}=K.$ We conclude that
\begin{equation}
\mu(a)=\mu(u^{\phi-1})=\mu(u)^{\phi-1}=\phi(b')/b'.\label{eq:monomial relation on a}
\end{equation}

Let $\alpha(z)=ord_{z}(a)$ and $\beta(z)=ord_{z}(b').$ These are
the divisors of the elliptic functions $a$ and $b'$, so for some
$\Lambda\subset\Lambda_{0}$ we have $\alpha,\beta\in\mathscr{D}_{\Lambda}.$
Furthermore, taking the divisor of the relation (\ref{eq:monomial relation on a})
gives
\[
\sum_{i=0}^{m}e_{i}\psi^{i}(\alpha)=\phi(\beta)-\beta.
\]
Define
\[
\delta(z)=\sum_{\nu=1}^{\infty}\alpha(\frac{z}{q^{\nu}})\,\,\,(z\ne0);\,\,\,\,\delta(0)=0.
\]
Then $\delta\in\mathscr{D}$ and for $z\ne0$
\[
\begin{cases}
\begin{array}{c}
\alpha(z)=\delta(qz)-\delta(z)\\
\beta(z)=\sum_{i=0}^{m}e_{i}\delta(p^{i}z).
\end{array}\end{cases}
\]
Applying the Periodicity Theorem \ref{prop:periodicity theorem} to
$f_{p}(z)=\beta(pz),$ $f_{q}(z)=\alpha(p^{m}z)$ and $f(z)=\delta(p^{m}z)$
we conclude:
\begin{itemize}
\item After replacing $\Lambda$ by a sublattice, a suitable modification
$\widetilde{\delta}$ of $\delta$ at 0 is $\Lambda$-periodic.
\item We must have $\alpha(0)=0.$
\end{itemize}
The first assertion is the Periodicity Theorem. For the second, if
$z\ne0$ we have
\[
\alpha(z)=\delta(qz)-\delta(z)=\widetilde{\delta}(qz)-\widetilde{\delta}(z).
\]
Let $\Lambda$ be a periodicity lattice for both $\alpha$ and $\widetilde{\delta}.$
Take $0\ne\lambda\in\Lambda$. Then $\alpha(\lambda)=\widetilde{\delta}(q\lambda)-\widetilde{\delta}(\lambda)=0,$
hence $\alpha(0)=0.$

We may therefore assume that $\delta(z)$ is already periodic and
$\alpha(z)=\delta(qz)-\delta(z)$ holds everywhere, including at 0.
Observe, however, that in the process of modifying $\delta$ at 0,
we might no longer have $\delta(0)=0.$

Let $\Pi$ be a fundamental parllelopiped for $\mathbb{C}/\Lambda$
where $\Lambda$ is a periodicity lattice for $\alpha,\beta$ and
$\delta$. Then
\[
0=\sum_{z\in\Pi}\alpha(z)=\sum_{z\in\Pi}\delta(qz)-\sum_{z\in\Pi}\delta(z)=(q^{2}-1)\sum_{z\in\Pi}\delta(z),
\]
so $\delta\in\mathrm{Div}^{0}(\mathbb{C}/\Lambda)$. We also have
\[
\sum_{z\in\Pi}z\alpha(z)=q^{-1}\sum_{z\in\Pi}qz\delta(qz)-\sum_{z\in\Pi}z\delta(z)
\]

\[
=q^{-1}\sum_{z\in q\Pi}z\delta(z)-\sum_{z\in\Pi}z\delta(z)=(q-1)\sum_{z\in\Pi}z\delta(z).
\]
In the last step we used the fact that $q\Pi$ is the union of $q^{2}$
translates $\Pi+\lambda$ and that
\[
\sum_{z\in\Pi+\lambda}z\delta(z)=\sum_{z\in\Pi}z\delta(z)
\]
because $\delta$ is $\Lambda$-periodic and $\sum_{z\in\Pi}\delta(z)=0.$

Let $\Lambda'=(q-1)\Lambda$. Let $\Pi'$ be a fundamental parllelopiped
for $\Lambda'$. Then by the same argument as above
\[
\sum_{z\in\Pi'}z\delta(z)=(q-1)^{2}\sum_{z\in\Pi}z\delta(z)=(q-1)\sum_{z\in\Pi}z\alpha(z)\in\Lambda'
\]
because by Abel-Jacobi $\sum_{z\in\Pi}z\alpha(z)\in\Lambda$.

Replacing $\Lambda$ by $\Lambda'$ we conclude, again by Abel-Jacobi,
that $\delta$ is the divisor of some $b\in K_{\Lambda}.$ Let $c=a/(\phi(b)/b).$
Then $c$ is $\Lambda$-elliptic and its divisor is
\[
\alpha-(\phi(\delta)-\delta)=0.
\]
Thus $c$ is constant, and the proof is complete.
\end{proof}
\begin{cor}[First order case of Theorem \ref{thm:Main Theorem B}]
 \label{prop:main theorem order one, homogenous} Let $f\in F$ satisfy
the first order, linear homogenous equation
\[
\phi(f)=af
\]
with $a\in K^{\times}$. Then either $f\in S$ or $\{f,\psi(f),\psi^{2}(f),\dots\}$
are algebraically independent over $K$.
\end{cor}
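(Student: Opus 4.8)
The plan is to establish the dichotomy by proving that if $\{f,\psi(f),\psi^{2}(f),\dots\}$ is \emph{not} algebraically independent over $K$, then $f\in S$. If $f=0$ this is trivial, so assume $f\in F^{\times}$. The first step is to record, exactly as in the deduction of Theorem~\ref{thm:Main Theorem A} from Theorem~\ref{thm:Main Theorem B}, that failure of algebraic independence is equivalent to $f$ being $\psi$-algebraic over $K$, i.e.\ to $\mathrm{tr.deg.}(K(f)_{\psi}/K)<\infty$: if some finite subset of the $\psi$-orbit of $f$ is algebraically dependent over $K$, choose $N$ minimal with $f,\psi(f),\dots,\psi^{N}(f)$ algebraically dependent; then $\psi^{N}(f)$ is algebraic over $K(f,\dots,\psi^{N-1}(f))$, and since $\psi$ is an automorphism of $K$ it follows by induction that every $\psi^{j}(f)$ with $j\geq N$ is algebraic over $K(f,\dots,\psi^{N-1}(f))$, so $\mathrm{tr.deg.}(K(f)_{\psi}/K)\leq N$.

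Next I would place the problem in the setting of Proposition~\ref{prop:order one equations} by realizing the parametrized Picard--Vessiot extension \emph{inside} $F$. Let $M=Ke$ with $\Phi(e)=a^{-1}e$ be the rank-one $\phi$-module of $\phi(y)=ay$, and put $E=K(f)_{\psi}=K(f,\psi(f),\psi^{2}(f),\dots)\subseteq F$. Being a subfield of $F$, $E$ is a $\phi$-field on which $\psi$ also acts (since $\phi(\psi^{i}(f))=\psi^{i}(a)\,\psi^{i}(f)$, while $\psi$ shifts the generators), and $E^{\phi}\subseteq F^{\phi}=\mathbb{C}=C$, hence $E^{\phi}=C$; moreover, as $f\in F^{\times}$, the $1\times1$ matrix $(f)$ is a fundamental matrix trivializing $M$ over $E$, and $E$ is by construction the $\psi$-hull of $K(f)$. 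Thus $E$ satisfies the defining properties of a PPV extension for $M$, and since it is already a field no passage to powers of $\phi$ and $\psi$ is needed: Proposition~\ref{prop:order one equations} applies with $L_{\psi}=E$ and $u=f$.

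The conclusion would then follow quickly. Assuming $\{f,\psi(f),\dots\}$ is not algebraically independent over $K$, the first step gives that $f$ is $\psi$-algebraic over $K$, i.e.\ condition (i) of Proposition~\ref{prop:order one equations} holds; hence so does (iii), producing $b\in K^{\times}$ and $c\in\mathbb{C}^{\times}$ with $a=c\,\phi(b)/b$. Then $g=f/b\in F$ satisfies $\phi(g)=cg$, and writing $g=\sum_{n}c_{n}z^{n}$ and comparing with $\phi(g)=\sum_{n}q^{n}c_{n}z^{n}$ forces $c_{n}=0$ whenever $q^{n}\neq c$. Since $q\geq2$ there is at most one integer $n$ with $q^{n}=c$, so either $g=0$ or $g=c_{n}z^{n}$ for that single $n$; in either case $g\in S$ because $z^{\pm1}\in S$, and since $S$ is a $K$-algebra containing $b$ we get $f=bg\in S$.

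I expect the only non-formal point to be the middle step: observing that the concretely given series $f\in F$ already generates a PPV extension of $K$, so that the Galois-theoretic content of Proposition~\ref{prop:order one equations}---and in particular the implication (i)$\Rightarrow$(iii), which rests on Lemma~\ref{lem:subgruops G_m} and on the periodicity theorem of \S\ref{subsec:A-periodicity-theorem}---becomes available for $f$ itself. The remaining ingredients are the transcendence-degree bookkeeping borrowed from the proof of Theorem~\ref{thm:Main Theorem A} and the elementary solution of a rank-one constant-coefficient $\phi$-equation in $\mathbb{C}((z))$.
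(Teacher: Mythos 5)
Your proof is correct, and it takes a genuinely different route at the final step that is, if anything, cleaner than the paper's. Both arguments agree up to the point where Proposition \ref{prop:order one equations} becomes applicable: you make the first step (failure of algebraic independence of the $\psi$-orbit is equivalent to $\psi$-algebraicity) explicit, and you directly verify that $E=K(f)_{\psi}\subset F$ is itself a PPV extension that is already a field, whereas the paper only asserts in passing that $K(f)_{\psi}$ ``may be embedded'' in a PPV extension. After invoking Proposition \ref{prop:order one equations}, however, you diverge: the paper uses implication (ii), obtaining $\psi(f)=\widetilde{a}f$ with $\widetilde{a}\in K^{\times}$, and then appeals to the heavy Theorem \ref{thm:dS2} to conclude $f\in S_{\phi}(F/K)\cap S_{\psi}(F/K)=S$. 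You instead use implication (iii), write $a=c\,\phi(b)/b$, and observe by an elementary comparison of Laurent coefficients that $g=f/b$ must be a monomial $c_{n}z^{n}$ (there is at most one $n$ with $q^{n}=c$ since $q\ge2$), whence $f=bg\in K[z,z^{-1}]\subset S$. This bypasses Theorem \ref{thm:dS2} entirely and, as a bonus, directly establishes the sharper conclusion $z^{m}f\in K$ for some $m\in\mathbb{Z}$ that the paper only records as a remark (citing \cite{dS1}) after the corollary. Note that both routes still ultimately rest on the nontrivial implication (i)$\Rightarrow$(iii) of Proposition \ref{prop:order one equations}, which uses Lemma \ref{lem:subgruops G_m} and the periodicity theorem; your argument does not avoid that machinery, only the second heavy ingredient.
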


\begin{proof}
We may embed $K(f)_{\psi}$ in the PPV extension $L_{\psi}.$ If $\{f,\psi(f),\psi^{2}(f),\dots\}$
are not algebraically independent over $K$ then, according to the
last Proposition $f$ satisfies also a linear homogenous $\psi$-difference
equation over $K.$ By Theorem \ref{thm:dS2} we must have $f\in S$.
\end{proof}
\begin{rem*}
According to \cite{dS1}, in the order one case, if $f$ is $\psi$-algebraic
over $K,$ we can even infer that for some $m\in\mathbb{Z},$ $z^{m}f\in K$.
\end{rem*}
Generalizing from first order homogenous equations to first order
inhomogenous equations is done exactly as in \cite{A-D-H-W}, Proposition
4.5. We do not repeat the proof, as it can be duplicated word for
word, and will not be needed in the sequel, see the remark below.
The only difference is that at the last step in \cite{A-D-H-W}, assuming
$f$ was $\psi$-algebraic over $K$, Theorem 1.1 of that paper was
invoked to deduce that $f\in K.$ Here we should invoke Theorem \ref{thm:dS2}
instead, so we can only deduce $f\in S$. We arrive at the following
Proposition.
\begin{prop}
\label{prop:Main theorem first order} Let $f\in F$ satisfy the inhomogenous
difference equation
\[
\phi(f)=af+b
\]
with $a,b\in K$. Then either $f\in S$ or $\{f,\psi(f),\psi^{2}(f),\dots\}$
are algebraically independent over $K$.
\end{prop}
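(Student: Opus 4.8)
The plan is to adapt the proof of Proposition~4.5 of \cite{A-D-H-W} to case~(2Ell); as already remarked, that argument transcribes essentially verbatim, the only change being at the final step. Phrased contrapositively, the goal is: \emph{if} $\{f,\psi(f),\psi^{2}(f),\dots\}$ is algebraically dependent over $K$, \emph{then} $f\in S$. First I would dispose of the case that $f$ is algebraic over $K$: then $K(f)/K$ is finite and $\phi$ extends to $K(f)$; being an injective endomorphism of a finite extension of the field $K$ on which it is already an automorphism, $\phi$ is an automorphism of $K(f)$, so Proposition~\ref{prop:Properties of K}(iii) forces $K(f)=K$, hence $f\in K\subset S$. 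Assume henceforth that $f$ is transcendental over $K$. Next I would record that $f\in S_{\phi}(F/K)$: this is the hypothesis if $b=0$, while if $b\neq0$ one eliminates $b$ between $\phi(f)=af+b$ and $\phi^{2}(f)=\phi(a)\phi(f)+\phi(b)$ to obtain the homogeneous relation $\phi^{2}(f)-\bigl(\phi(a)+\phi(b)/b\bigr)\phi(f)+a\bigl(\phi(b)/b\bigr)f=0$ over $K$. Since Theorem~\ref{thm:dS2} applies verbatim with $\phi,\psi$ replaced by any powers $\phi^{r},\psi^{s}$ (again coprime multiplication maps of degree $\geq2$, and with the same ring $S$), it now suffices to prove $f\in S_{\psi}(F/K)$, i.e. that $f$ satisfies a nonzero homogeneous linear $\psi$-difference equation over $K$; then $f\in S_{\phi}(F/K)\cap S_{\psi}(F/K)=S$.

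To produce such a $\psi$-equation I would attach to $\phi(f)=af+b$ the rank-two $\phi$-module $M=(K^{2},\Phi)$ with matrix $A=\left(\begin{smallmatrix}a & b\\0 & 1\end{smallmatrix}\right)$, sitting in an exact sequence $0\to M_{a}\to M\to\mathbf{1}\to0$ with $M_{a}$ the rank-one module of $\phi(u)=au$ and $\mathbf{1}$ trivial; a fundamental matrix is $U=\left(\begin{smallmatrix}u & f\\0 & 1\end{smallmatrix}\right)$ with $\phi(u)=au$. Passing to a PPV extension $L_{\psi}$ (the $\psi$-hull of $K(u,f)$), and after replacing $\phi,\psi$ by suitable powers assuming $L_{\psi}$ is a field, the $\psi$-Galois group $G$ embeds into $[\psi]B$, where $B\subset GL_{2,\mathbb{C}}$ is the Borel subgroup of matrices $\left(\begin{smallmatrix}\alpha & \beta\\0 & 1\end{smallmatrix}\right)\cong\mathbb{G}_{m}\ltimes\mathbb{G}_{a}$; it is Zariski dense in the classical Galois group $\mathcal{G}\subseteq B$, and $\psi\dim G=\psi\mathrm{tr.deg.}(L_{\psi}/K)$. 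Because $\psi\mathrm{tr.deg.}(K(f)_{\psi}/K)=0$ by hypothesis and $L_{\psi}$ is the $\psi$-hull of $K(f)_{\psi}$ together with $u$, we obtain $\psi\dim G\le1$; in fact $\psi\dim G$ is $1$ or $0$ according as the homogeneous solution $u$ is $\psi$-transcendental or $\psi$-algebraic over $K$. In either case the image of $G$ in $[\psi]\mathbb{G}_{m}$ (the $\psi$-Galois group of $M_{a}$) and the unipotent part $G\cap[\psi]\mathbb{G}_{a}$ are constrained: the latter has $\psi$-dimension $0$, and by Lemma~\ref{lem:subgruops G_m} and Proposition~\ref{prop:order one equations} the former is either all of $[\psi]\mathbb{G}_{m}$ (exactly when $M_{a}$ does not descend to $\mathbb{C}$) or a proper subgroup carrying a nontrivial $\psi$-monomial relation (when $M_{a}$ does descend).

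From here the argument is that of \cite{A-D-H-W}: one invokes the structure theory of $\psi$-algebraic subgroups of $[\psi]B$ (Appendix~A of \cite{DV-H-W1}, with \cite{DV-H-W2}) — a $\psi$-subgroup of the commutative $[\psi]\mathbb{G}_{a}$ of $\psi$-dimension $0$ being cut out by a nonzero \emph{linear} $\psi$-relation in its coordinate — together with Proposition~\ref{prop:order one equations} to control the toral factor (after the harmless twist $f\mapsto f/b_{0}$, legitimate since $b_{0}\in K^{\times}\subset S$ is invertible and it turns $a$ into a constant when $M_{a}$ descends, and via the homogeneous case Corollary~\ref{prop:main theorem order one, homogenous} otherwise), to conclude that $G$ lies in a proper $\psi$-subgroup of $[\psi]B$ defined by such relations. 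Reading the defining relation back through the $\psi$-Galois correspondence — using $\sigma(U)=U\cdot V(\sigma)$ with $V(\sigma)$ upper triangular and $L_{\psi}^{G}=K$ — yields, as in \cite{A-D-H-W}, a nonzero $K$-linear combination of $f,\psi(f),\psi^{2}(f),\dots$ lying in $K$, i.e. a (possibly inhomogeneous) linear $\psi$-difference equation for $f$ over $K$; applying $\phi$ to it and eliminating the constant term against $\phi(f)=af+b$ produces a nonzero \emph{homogeneous} one. By the reduction of the first paragraph this gives $f\in S$, completing the proof.

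The step I expect to be the real obstacle is the third paragraph: faithfully tracking, through the (in general non-split) extension $1\to G\cap[\psi]\mathbb{G}_{a}\to G\to\bar{G}\to1$, which $\psi$-relations among the matrix coordinates of $G$ actually survive, and checking that the element extracted via the Galois correspondence descends to a genuine linear $\psi$-difference equation over $K$ itself, rather than over some intermediate $\phi$-pseudofield, with the toral part contributing nothing beyond what the homogeneous case already controls. This bookkeeping is precisely what \cite{A-D-H-W} carries out using \cite{DV-H-W1,DV-H-W2}; since it is done there over an arbitrary ground field satisfying the relevant hypotheses, it applies unchanged in case~(2Ell), the only substitution being that the terminal appeal in \cite{A-D-H-W} to their Theorem~1.1 (which would give $f\in K$) is replaced here by Theorem~\ref{thm:dS2} (giving $f\in S$).
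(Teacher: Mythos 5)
Your proposal takes essentially the same route the paper indicates: the paper itself simply cites Proposition 4.5 of \cite{A-D-H-W}, says the argument transfers verbatim, and replaces the terminal appeal to Theorem 1.1 of that paper by Theorem \ref{thm:dS2}. You correctly pin down the pivot---reduce to membership in $S_{\phi}(F/K)\cap S_{\psi}(F/K)$ and invoke Theorem \ref{thm:dS2} to conclude $f\in S$---and, like the paper, outsource the parametrized Galois bookkeeping to \cite{A-D-H-W} and \cite{DV-H-W1,DV-H-W2}.
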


\begin{rem*}
We shall not need this Proposition. In the last stage of our proof
of Theorem \ref{thm:Main Theorem B} we shall deal with the same type
of inhomogenous equation, but where $b\in S.$ We shall give full
details there.
\end{rem*}

\subsection{The case of a simple $\mathcal{G}$ }

\subsubsection{Recall of notation and assumptions}

Let 
\[
M=(K^{n},\Phi),\,\,\Phi(v)=A^{-1}\phi(v)
\]
be the rank-$n$ $\phi$-module over $K$ associated with the system
$\phi(Y)=AY.$ 

Let $L\subset L_{\psi}$ be PV and PPV extensions for $M$, $\mathcal{G}$
the (classical) difference Galois group and $G$ the (parametrized)
$\psi$-Galois group. 

Fixing a fundamental matrix $U$ with entries in $L$ we get embeddings
\[
G\subset[\psi]\mathcal{G}\subset[\psi]GL_{n,\mathbb{C}}.
\]
When we base change $M$ to $L$ we get the full, $n$-dimensional,
complex vector space of ``solutions''
\begin{equation}
\mathcal{V}=M_{L}^{\Phi}=U\mathbb{C}^{n}\subset M_{L}=L^{n}.\label{eq:solutions}
\end{equation}
If instead of $L$ we use the even larger PPV extension $L_{\psi}$
we get the same complex vector space $\mathcal{V}$, as all the solutions
already lie in $L^{n}$. However, over $L_{\psi}$ we get also the
solution spaces $\psi^{i}(\mathcal{V})$ of all the $\psi$-transforms
$M^{(\psi^{i})},$ $i\ge0.$

The difference Galois group $\mathcal{G}$ acts on $\mathcal{V}$.
If $\sigma\in\mathcal{G}(\mathbb{C})$ then for $v\in\mathbb{C}^{n}$
\[
\sigma(Uv)=UV(\sigma)v,
\]
so $\sigma\mapsto V(\sigma)$ is the matrix representation of $\mathcal{G}$
on $\mathcal{V}$ in the basis consisting of the columns of $U$.

\subsubsection{The simple case}
\begin{lem}[\cite{A-D-H-W}, Lemma 3.9]
\label{lem:connected and reduced} Assume that $L_{\psi}$ is a field.
Then $\mathcal{G}$ is connected and $G$ is $\psi$-integral, hence
in particular $\psi$-reduced.
\end{lem}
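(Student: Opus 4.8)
The plan is to prove the two assertions separately, both by reducing to the general structure theory of (P)PV rings recalled in Sections 2 and 3. For connectedness of $\mathcal{G}$: since $L_\psi$ is a field, its subfield $L = K(u_{ij})$ is the classical PV extension for $M$, and $L$ is a genuine field (not merely a pseudofield). Now $L$ is a finitely generated field extension of $K$, so the algebraic closure $\widetilde{K}$ of $K$ inside $L$ is a finite extension of $K$. One checks that $\widetilde{K}$ is a $\phi$-stable subfield of $L$ (apply $\phi$ and $\phi^{-1}$, both of which are automorphisms of $L$ and fix $K$, hence preserve the algebraic closure of $K$ in $L$), and that $\phi$ acts on it as an automorphism because $[\widetilde K:K]<\infty$ forces the injective $\phi$ to be surjective. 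By Proposition \ref{prop:Properties of K}(iii), $K$ has no nontrivial finite extension to which $\phi$ extends as an automorphism, so $\widetilde K = K$, i.e. $K$ is algebraically closed in $L$. By the Galois correspondence (part (iii) of the theorem in Section 2.6, which identifies $\mathcal{G}^0$ with the subgroup fixing the algebraic closure of $K$ in $L$), this means $\mathcal{G} = \mathcal{G}^0$, so $\mathcal{G}$ is connected.

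For $\psi$-integrality of $G$: by the facts listed after the definition of the PPV ring, $C\{G\} = (R_\psi \otimes_K R_\psi)^{\phi}$, and $W_\psi = \mathrm{Spec}^\psi(R_\psi)$ is a $\psi$-$G_K$-torsor, realized concretely by the isomorphism $R_\psi \otimes_C C\{G\} \simeq R_\psi \otimes_K R_\psi$ of $K$-$(\phi,\psi)$-algebras. The key input is that, $L_\psi$ being a field, the PPV ring $R_\psi$ is an integral domain (it is a $\phi$-simple subring of the field $L_\psi$ whose fraction field is $L_\psi$; a $\phi$-simple ring is a finite product of domains cyclically permuted by $\phi$, and the product has a single factor precisely because $L_\psi$ is a field and not a proper pseudofield). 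Then $R_\psi \otimes_K R_\psi$ — equivalently $R_\psi \otimes_C C\{G\}$ — needs to be shown to have the property that $C\{G\}$ is a domain and $\psi$ is injective on it. Injectivity of $\psi$ on $C\{G\}$ follows because $\psi$ is injective on $R_\psi$ (it is an automorphism on the overfield $K$ and $R_\psi$ is the $\psi$-hull of $K[u_{ij},\det(U)^{-1}]$ inside the field $L_\psi$, on which we may likewise arrange $\psi$ to be injective after passing to a power if necessary — but here we are simply given that things are set up so $L_\psi$ is a field), and $C\{G\} \hookrightarrow R_\psi \otimes_K R_\psi$ via $h \mapsto 1 \otimes 1 \cdot h$ inside the flat extension; $\psi$ acts compatibly.

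The heart of the matter — and the step I expect to be the main obstacle — is showing that $C\{G\}$ is an integral domain, i.e. that $R_\psi \otimes_K R_\psi$ is a domain. This is where one invokes the first assertion: since $\mathcal{G}$ is connected, the classical PV ring $R = K[u_{ij},\det(U)^{-1}]$ satisfies $R \otimes_K R$ is a domain (connectedness of $\mathcal{G}$ together with the torsor property $R\otimes_K R \simeq R \otimes_C C[\mathcal{G}]$ and $K$ being algebraically closed in $L = \mathrm{Quot}(R)$ ensures $\mathrm{Spec}(R)$ stays irreducible over $\overline K$, whence $R \otimes_K R$ is a domain). One then has to bootstrap from $R$ to $R_\psi$: write $R_\psi$ as an increasing union (or filtered colimit) of the classical PV rings $R_d$ of the modules $M_d = M \oplus M^{(\psi)} \oplus \cdots \oplus M^{(\psi^d)}$, note that each $M_d$ has connected difference Galois group (same argument: $\mathrm{Quot}(R_d) \subset L_\psi$ is a field, finitely generated over $K$, with $K$ algebraically closed in it by Proposition \ref{prop:Properties of K}(iii)), so each $R_d \otimes_K R_d$ is a domain, and a domain is preserved under filtered colimits with injective transition maps; since $R_\psi \otimes_K R_\psi = \varinjlim_d R_d \otimes_K R_d$, it is a domain. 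Hence $C\{G\}$ is a domain with injective $\psi$, i.e. $G$ is $\psi$-integral, and in particular $\psi$-reduced. The delicate bookkeeping is keeping the connectedness argument uniform across all the $R_d$ and checking that the colimit presentation of $R_\psi \otimes_K R_\psi$ is the correct one; modulo that, everything reduces to facts already assembled in Sections 2 and 3.
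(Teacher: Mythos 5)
The paper does not prove this lemma; it cites \cite{A-D-H-W}, Lemma 3.9 and only remarks that the proof relies on Proposition \ref{prop:Properties of K}. The connectedness half of your argument in fact already appears verbatim in the paper, inside part (iii) of the Galois correspondence theorem, and you reproduce it correctly. For the $\psi$-integrality half, your reconstruction via the filtration $R_\psi=\bigcup_d R_d$ with $R_d$ the PV ring of $M_d=\bigoplus_{k\le d}M^{(\psi^k)}$ is sound and is the standard route in the literature (\cite{O-W}, \cite{A-D-H-W}). Two places could use a cleaner sentence. First, injectivity of $\psi$ on $R_\psi$ needs no passage to powers and no hedging: $\psi:L_\psi\to L_\psi$ is a ring homomorphism out of a field into a nonzero ring, hence injective, and $R_\psi\subset L_\psi$. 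Second, the step that $\psi\otimes\psi$ is injective on $R_\psi\otimes_K R_\psi$ (needed to push injectivity from $R_\psi$ down to the subring $C\{G\}=(R_\psi\otimes_K R_\psi)^{\phi}$) should be spelled out: since $\psi$ is an automorphism of $K$, $\psi\otimes\psi$ factors as the isomorphism $R_\psi\otimes_K R_\psi\xrightarrow{\sim}\psi(R_\psi)\otimes_K\psi(R_\psi)$ followed by the inclusion $\psi(R_\psi)\otimes_K\psi(R_\psi)\hookrightarrow R_\psi\otimes_K R_\psi$, and tensoring over the field $K$ preserves injections. With those two sentences added, the remaining steps---each $\mathcal{G}_d$ connected by Proposition \ref{prop:Properties of K}(iii) applied to the field $\mathrm{Quot}(R_d)\subset L_\psi$, hence $R_d\otimes_K R_d\simeq R_d\otimes_C C[\mathcal{G}_d]$ a domain because $C[\mathcal{G}_d]$ is geometrically integral over $\mathbb{C}$, hence the filtered union $R_\psi\otimes_K R_\psi$ a domain, hence its $\psi$-stable subring $C\{G\}$ a domain with injective $\psi$---all go through.
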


The proof relies on Proposition \ref{prop:Properties of K}. Recall
that being $\psi$-integral means that the coordinate ring $\mathbb{C}\{G\}$
is a domain and $\psi$ is injective on it.
\begin{prop}[\cite{A-D-H-W}, Proposition 4.11]
 Assume that $L_{\psi}$ is a field. If $\mathcal{G}$ is simple,
then $G=[\psi]\mathcal{G}.$ In particular,
\[
\psi\mathrm{tr.deg.}(L_{\psi}/K)=\psi\dim(G)=\dim(\mathcal{G})>0.
\]
\end{prop}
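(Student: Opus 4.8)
The plan is to prove that a proper $\psi$-closed, $\psi$-integral (hence $\psi$-reduced) Zariski-dense subgroup $G \subsetneq [\psi]\mathcal{G}$ of a \emph{simple} algebraic group $\mathcal{G}$ cannot exist, so that $G = [\psi]\mathcal{G}$ is forced. We already know from Lemma~\ref{lem:connected and reduced} that, under the hypothesis that $L_{\psi}$ is a field, $\mathcal{G}$ is connected and $G$ is $\psi$-integral, and we know from the discussion preceding Proposition~\ref{prop:isomonodromic criterion} that $G$ is Zariski dense in $\mathcal{G}$. So the only thing to rule out is that the inclusion $G \subseteq [\psi]\mathcal{G}$ is strict. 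I would argue by contradiction: assume $G \subsetneq [\psi]\mathcal{G}$ and apply Proposition~\ref{prop:structure when Zariski closure simple}, which is exactly tailored to this situation. It yields an integer $m \ge 1$ and an element $h \in \mathcal{G}(\mathbb{C})$ such that
\[
G(B) \subseteq \{ g \in \mathcal{G}(B) \mid \psi^{m}(g) = h g h^{-1} \}
\]
for every $B \in \mathrm{Alg}_{\mathbb{C}}^{\psi}$.

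**Exploiting the structure relation.** The next step is to feed this relation back into the Picard-Vessiot machinery. Passing from $\psi$ to the power $\psi^{m}$ (replacing $\psi$ by $\psi^{m}$ throughout, which only shrinks $\Gamma$ to a finite-index subgroup and preserves all hypotheses — in particular $\psi^{m}$ still commutes with $\phi$ and the field of constants is unchanged), we may assume $m = 1$, so $\psi(g) = h g h^{-1}$ holds identically on $G$. By Proposition~\ref{prop:isomonodromic criterion}, the relation $\psi(X) = h X h^{-1}$ holding in $G$ is precisely the Galoisian criterion for $M$ to be $\psi$-isomonodromic. Therefore $M \simeq M^{(\psi)}$. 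Now invoke Theorem~\ref{thm:isomonodromy and solvability}: a $\psi$-isomonodromic $\phi$-module over $K$ (in case (2Ell), with $2 \le p,q$, $(p,q)=1$) has \emph{solvable} difference Galois group $\mathcal{G}$. But $\mathcal{G}$ is simple and $\dim \mathcal{G} > 0$ (a simple algebraic group is by definition nonabelian, in particular of positive dimension), so $\mathcal{G}$ is not solvable — contradiction. Hence $G = [\psi]\mathcal{G}$.

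**Conclusion.** Once $G = [\psi]\mathcal{G}$ is established, the dimension count is immediate from the facts collected in the review: $\psi\mathrm{tr.deg.}(L_{\psi}/K) = \psi\dim(G) = \psi\dim([\psi]\mathcal{G}) = \dim(\mathcal{G})$, where the last equality is the general identity $\psi\dim([\psi]\mathcal{H}) = \dim(\mathcal{H})$ recalled in Section~3.1. Since $\mathcal{G}$ is simple it has positive dimension, giving the final inequality $> 0$.

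**Main obstacle.** The technically delicate point is the reduction $m \mapsto 1$ by replacing $\psi$ with $\psi^{m}$: one must check that this substitution is harmless, i.e. that the PPV extension, the fixed-field statement $L_{\psi}^{\phi} = C$, the structure of $G$, and the hypothesis "$L_{\psi}$ is a field" all survive (or can be restored, by also passing to a power of $\phi$ as in the first-order case, after which $L_{\psi}$ is again a field); and that $\psi$-isomonodromy with respect to $\psi^{m}$ still triggers Theorem~\ref{thm:isomonodromy and solvability}, whose hypotheses are stated for the original $\psi$ but whose proof only uses that the group generated is $\simeq \mathbb{Z}^{2}$ and that we are in case (2Ell) with coprime $p, q \ge 2$ — replacing $p$ by $p^{m}$ keeps $(p^{m}, q) = 1$ and $p^{m} \ge 2$, so this is fine. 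Modulo this bookkeeping, the proof is a short chain: Proposition~\ref{prop:structure when Zariski closure simple} $\Rightarrow$ $\psi$-isomonodromy $\Rightarrow$ $\mathcal{G}$ solvable $\Rightarrow$ contradiction with simplicity.
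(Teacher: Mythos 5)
Your proof is correct and follows exactly the paper's chain of reasoning: Proposition~\ref{prop:structure when Zariski closure simple} gives the twisted relation $\psi^{m}(g)=hgh^{-1}$ on $G$, this forces $\psi^{m}$-isomonodromy of $M$ via Proposition~\ref{prop:isomonodromic criterion}, Theorem~\ref{thm:isomonodromy and solvability} then forces $\mathcal{G}$ solvable, contradicting simplicity. The one place you diverge from the paper is the reduction $m\mapsto 1$ by substituting $\psi^{m}$ for $\psi$, which you yourself flag as the delicate point; the paper avoids this detour entirely. It simply applies the backward direction of Proposition~\ref{prop:isomonodromic criterion} with $\psi^{m}$ in place of $\psi$ but with the \emph{original} $\psi$-Galois group $G$: from $\psi^{m}(V(\sigma))=hV(\sigma)h^{-1}$ for all $\sigma\in G(B)$, one sets $A_{\psi^{m}}:=\psi^{m}(U)\,h\,U^{-1}$, checks directly (as in the proof of Proposition~\ref{prop:isomonodromic criterion}) that it is fixed by every $\sigma\in G$, hence lies in $GL_{n}(L_{\psi}^{G})=GL_{n}(K)$, and that $\phi(A_{\psi^{m}})A_{\phi}=\psi^{m}(A_{\phi})A_{\psi^{m}}$, so $M$ is $\psi^{m}$-isomonodromic. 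Then Theorem~\ref{thm:isomonodromy and solvability} is invoked with $p$ replaced by $p^{m}$, which as you observe preserves $(p^{m},q)=1$ and $p^{m}\ge 2$. This sidesteps the genuine worry your substitution raises — namely whether the PPV extension, its Galois group, and the derived relation all survive passage from $\psi$ to $\psi^{m}$ (the $\psi^{m}$-Galois group need not coincide with $G$, and you would have to track this carefully) — so the bookkeeping you list in your last paragraph can be dispensed with altogether.
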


\begin{proof}
We repeat the arguments of \cite{A-D-H-W}. Note first that $G$ is
Zariski dense in $\mathcal{G}$ (always true) and $\psi$-reduced
(by the previous lemma). By Proposition \ref{prop:structure when Zariski closure simple},
if $G\subsetneq[\psi]\mathcal{G}$, there exists an $h\in GL_{n}(\mathbb{C})$
and an $m\ge1$ such that
\[
\psi^{m}(X)=hXh^{-1}
\]
holds in $G.$

By Proposition \ref{prop:isomonodromic criterion} $M$ is then $\psi^{m}$-isomonodromic.

By Theorem \ref{thm:isomonodromy and solvability}, $\mathcal{G}$
must be solvable, contradicting the assumption that it was simple.
This contradiction shows that we must have had $G=[\psi]\mathcal{G}.$
\end{proof}
\begin{prop}[\cite{A-D-H-W}, Proposition 4.12]
\label{prop:If U =00005Cpsi algebraic then G solvable} Assume that
$L_{\psi}$ is a field. Then either $\mathcal{G}$ is connected and
solvable or $\psi\mathrm{tr.deg.}(L_{\psi}/K)>0.$
\end{prop}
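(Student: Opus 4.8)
The plan is to reduce the assertion to the case of a \emph{simple} classical Galois group, which was just treated in the preceding proposition. By Lemma~\ref{lem:connected and reduced} the group $\mathcal{G}$ is already connected, so it is enough to assume in addition that $\mathcal{G}$ is \emph{not} solvable and to deduce that $\psi\mathrm{tr.deg.}(L_{\psi}/K)>0$.

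The first step is purely group-theoretic: extract from $\mathcal{G}$ a simple quotient. Since $\mathcal{G}$ is connected and non-solvable, its radical $\mathcal{R}=\mathcal{R}(\mathcal{G})$ is a proper closed normal subgroup and $\mathcal{G}/\mathcal{R}$ is a non-trivial connected semisimple group. Its adjoint quotient is a non-trivial direct product of simple algebraic groups, and projecting onto one of the factors yields a surjective homomorphism of algebraic groups $\pi\colon\mathcal{G}\to\mathcal{S}$ with $\mathcal{S}$ a simple linear algebraic group; set $\mathcal{N}=\ker\pi$, a closed normal subgroup of $\mathcal{G}$ with $\mathcal{G}/\mathcal{N}\cong\mathcal{S}$.

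The second step transports this to the parametrized side, using the dictionary of Section~\ref{sec:PPV}. Since $[\psi]\mathcal{N}$ is normal in $[\psi]\mathcal{G}$, the intersection $G_{0}:=G\cap[\psi]\mathcal{N}$ is a normal $\psi$-algebraic subgroup of $G$, and it is precisely the kernel of the composite of the inclusion $G\hookrightarrow[\psi]\mathcal{G}$ with $[\psi]\pi\colon[\psi]\mathcal{G}\to[\psi]\mathcal{S}$; hence $G/G_{0}$ embeds into $[\psi]\mathcal{S}$. By the parametrized Galois correspondence, the fixed pseudofield $L'_{\psi}:=L_{\psi}^{G_{0}}$ is a PPV extension, inside $L_{\psi}$, of some $\phi$-module $M'$, whose $\psi$-Galois group is $G/G_{0}$. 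Now $G$ is Zariski dense in $\mathcal{G}$, the map $\pi$ is surjective, and the image of a morphism of algebraic groups is closed; therefore the image $G/G_{0}$ of $G$ is Zariski dense in $\mathcal{S}$. Since the classical Galois group of any $\phi$-module is the Zariski closure of its $\psi$-Galois group (applied now to $M'$), the classical Galois group of $M'$ is $\mathcal{S}$, which is simple. Finally, $L'_{\psi}$, being a sub-$\phi$-pseudofield of the field $L_{\psi}$, is a domain, hence itself a field. The preceding proposition, applied to $M'$, then gives
\[
\psi\mathrm{tr.deg.}(L_{\psi}/K)\ \ge\ \psi\mathrm{tr.deg.}(L'_{\psi}/K)\ =\ \dim\mathcal{S}\ >\ 0,
\]
the first inequality being the monotonicity of $\psi$-transcendence degree along the extension of $\psi$-fields $L'_{\psi}\subseteq L_{\psi}$, and $\dim\mathcal{S}>0$ because $\mathcal{S}$ is simple.

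I expect the only point requiring genuine care to be the bookkeeping in the third paragraph: that $G\cap[\psi]\mathcal{N}$ is a normal $\psi$-algebraic subgroup, that the parametrized Galois correspondence identifies its fixed pseudofield with the PPV extension of a module $M'$ whose $\psi$-Galois group is the quotient $G/G_{0}$, and that $[\psi]$ is compatible with the kernel $\mathcal{N}$ of $\pi$ so that $G/G_{0}$ is Zariski dense in $\mathcal{S}$. Each of these is a formal consequence of the properties of the (parametrized) Galois correspondence recalled earlier; the remaining ingredients — the production of a simple quotient of a connected non-solvable algebraic group and the monotonicity of $\psi$-transcendence degree — are standard.
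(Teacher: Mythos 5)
Your proof is correct and takes essentially the same route as the paper: both extract a simple quotient $\mathcal{G}/\mathcal{N}\cong\mathcal{S}$ of the connected non-solvable $\mathcal{G}$, pass via the Galois correspondence to a $\phi$-module $M'$ with simple classical Galois group $\mathcal{S}$ whose PPV extension sits inside $L_\psi$, and invoke the preceding proposition together with monotonicity of $\psi$-transcendence degree. The paper's proof is a brief sketch (working on the classical side with $L'=L^{\mathcal{N}}$ and then asserting the containment of the PPV extension), whereas you work directly on the parametrized side with $G_0=G\cap[\psi]\mathcal{N}$ and carefully justify Zariski density of the image in $\mathcal{S}$ and that $L'_\psi$ is a field; these are exactly the details the paper delegates to \cite{A-D-H-W}.
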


\begin{proof}
Same as in \cite{A-D-H-W}. Connectendness follows from Lemma \ref{lem:connected and reduced}.
If $\mathcal{G}$ is not solvable then it has a simple quotient $\mathcal{G}/\mathcal{N}.$
The Galois correspondence theorem is used to obtain a PV extension
$L'=L^{\mathcal{N}}$ for a $\phi$-module $M'$, whose difference
Galois group is $\mathcal{G}/\mathcal{N}.$ The PPV extension of the
same $M'$ is a subfield $L'_{\psi}\subset L_{\psi},$ and by the
previous Proposition $\psi\mathrm{tr.deg.}(L_{\psi}'/K)>0.$ A fortiori,
$\psi\mathrm{tr.deg.}(L_{\psi}/K)>0.$
\end{proof}

\subsection{Conclusion of the proof: Galois acrobatics}

We prove the following claim, which clearly implies Theorem \ref{thm:Main Theorem B},
by induction on $n$. The assumptions on $p$ and $q$ are maintained.
\begin{claim}
Let $n\ge1,$ $A\in GL_{n}(K)$ and $u=\,^{t}(u_{1},\dots,u_{n})\in F^{n}$
a solution of $\phi(Y)=AY.$ Assume that all the coordinates $u_{i}$
are $\psi$-algebraic, i.e. $\mathrm{tr.deg.}(K(u)_{\psi}/K)<\infty$
where $K(u)_{\psi}$ is the field $\psi$-hull of $K(u)$ in $F$.
Then $u\in S^{n}.$
\end{claim}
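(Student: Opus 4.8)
The plan is to prove the Claim by induction on $n$, mimicking the ``Galois acrobatics'' of \cite{A-D-H-W}. For $n=1$ we have $A\in K^{\times}$, so $u=u_{1}$ solves a homogeneous first order equation $\phi(u)=au$; by Corollary \ref{prop:main theorem order one, homogenous} either $u\in S$ or $\{u,\psi(u),\psi^{2}(u),\dots\}$ is algebraically independent over $K$, and the latter is incompatible with $\mathrm{tr.deg.}(K(u)_{\psi}/K)<\infty$. For $n\ge2$ let $M=(K^{n},\Phi)$, $\Phi=A^{-1}\phi$, be the associated $\phi$-module. Replacing $\phi$ by a suitable power $\phi^{r}$ (and $\psi$ by $\psi^{s}$), which preserves the hypotheses since $\mathrm{tr.deg.}(K(u)_{\psi^{s}}/K)\le\mathrm{tr.deg.}(K(u)_{\psi}/K)<\infty$ and $K^{\phi^{r}}=\mathbb{C}$, I may assume the PPV extension $L_{\psi}$ of $M$ is a field, so that $\mathcal{G}$ is connected and $G$ is $\psi$-integral by Lemma \ref{lem:connected and reduced}. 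By the usual comparison between $F$ and PPV extensions (cf. \cite{D-H-R,A-D-H-W}) I may assume $u$ is the first column of a fundamental matrix $U\in GL_{n}(L_{\psi})$ with $K(u)_{\psi}\subseteq L_{\psi}$, and, replacing $M$ by the smallest $\phi$-submodule having $u$ among its solutions, that $u$ \emph{generates} $M$, i.e. the $\mathcal{G}$-orbit of $c:=U^{-1}u\in\mathbb{C}^{n}$ spans the solution space $\mathcal{V}=U\mathbb{C}^{n}$.

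By Proposition \ref{prop:If U =00005Cpsi algebraic then G solvable}, either $\mathcal{G}$ is connected and solvable, or $\psi\mathrm{tr.deg.}(L_{\psi}/K)>0$. In the \emph{solvable} case, Lie--Kolchin gives a complete flag $0=M_{0}\subset M_{1}\subset\cdots\subset M_{n}=M$ of $\phi$-submodules, i.e. a $K$-basis in which $A=(a_{ij})$ is upper triangular; writing $u=(u_{1},\dots,u_{n})$ in this basis and descending the flag, $u_{n}$ solves $\phi(u_{n})=a_{nn}u_{n}$, is $\psi$-algebraic, hence lies in $S$ by Corollary \ref{prop:main theorem order one, homogenous}; and inductively, once $u_{i+1},\dots,u_{n}\in S$ are known, $u_{i}$ satisfies the \emph{inhomogeneous} first order equation $\phi(u_{i})=a_{ii}u_{i}+\sum_{j>i}a_{ij}u_{j}$ with right-hand inhomogeneity in $S$. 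So it remains to prove: if $w\in F$ is $\psi$-algebraic and $\phi(w)=aw+c$ with $a\in K^{\times}$ and $c\in S$, then $w\in S$. This is the promised extension of the first order inhomogeneous case to right-hand sides in $S$; I would prove it directly, by induction on the degree of $c$ in $z,z^{-1},\zeta(z,\Lambda)$ --- using $\phi(z)=qz$ and $\phi(\zeta(z,\Lambda))=q\zeta(z,\Lambda)+g_{q}(z,\Lambda)$ with $g_{q}\in K$ --- which reduces it to the homogeneous case and a single application of the Periodicity Theorem \ref{prop:periodicity theorem}. Granting this, all $u_{i}\in S$, so $u\in S^{n}$.

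It remains to rule out the case $\psi\mathrm{tr.deg.}(L_{\psi}/K)>0$. Since $u\in K(u)_{\psi}=L_{\psi}^{H}$ for the $\psi$-algebraic subgroup $H\subseteq G$ corresponding to $K(u)_{\psi}$ in the parametrized Galois correspondence, and since $L_{\psi}$ is a PPV extension of $M$ over $K(u)_{\psi}$ with Galois group $H$, additivity of $\psi$-transcendence degree in towers gives
\[
\psi\dim(H)=\psi\mathrm{tr.deg.}(L_{\psi}/K(u)_{\psi})=\psi\mathrm{tr.deg.}(L_{\psi}/K)-\psi\mathrm{tr.deg.}(K(u)_{\psi}/K)=\psi\dim(G),
\]
the last term again because $u$ is $\psi$-algebraic. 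The ``acrobatics'' now consist in leveraging this equality, together with the internal structure of $\mathcal{G}$ (its radical and semisimple part) and Proposition \ref{prop:structure when Zariski closure simple} applied to the simple sub-quotients of $\mathcal{G}$ --- this being the only place Theorem \ref{thm:isomonodromy and solvability} enters, through Proposition \ref{prop:structure when Zariski closure simple}, forcing $G$ to surject onto $[\psi](\mathcal{G}/\mathcal{N})$ for every simple quotient $\mathcal{G}/\mathcal{N}$ --- to conclude that $H$ is Zariski dense in $\mathcal{G}$. As $H$ fixes $u$, i.e. $H\subseteq[\psi]\,\mathrm{Stab}_{GL_{n}}(c)$, passing to Zariski closures gives $\mathcal{G}\subseteq\mathrm{Stab}_{GL_{n}}(c)$; then $\mathbb{C}c$ is a $\mathcal{G}$-stable line, and since $c$ generates $\mathcal{V}$ this forces $\mathcal{V}=\mathbb{C}c$ and $n=1$, a contradiction. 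Hence the non-solvable case does not occur and the induction closes.

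I expect this last step --- squeezing enough information about all of $G$ out of a single $\psi$-algebraic solution that generates $M$, where the structure of $\mathcal{G}$ and Theorem \ref{thm:isomonodromy and solvability} have to be matched up carefully on the correct sub-quotients --- to be the main difficulty; the inhomogeneous first order lemma with right-hand side in $S$ is a secondary, more computational obstacle, and it is precisely the point at which the enlargement of $K$ to the ring $S$ forces an argument genuinely different from \cite{A-D-H-W}.
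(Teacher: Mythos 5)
Your overall shape (induction on $n$, base case from Corollary \ref{prop:main theorem order one, homogenous}, Lie--Kolchin for the solvable case, reduction to first-order inhomogeneous equations) matches the paper, but there are three genuine gaps.

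First, your mechanism for ruling out $\psi\mathrm{tr.deg.}(L_{\psi}/K)>0$ is not carried out and, as sketched, does not close. The identity $\psi\dim(H)=\psi\dim(G)$ is correct, but it does \emph{not} by itself imply $H$ is Zariski dense in $\mathcal{G}$: $\psi$-dimension is insensitive to the classical dimension of the Zariski closure, and the appeal to ``internal structure of $\mathcal{G}$'' plus Proposition \ref{prop:structure when Zariski closure simple} applied to simple sub-quotients is not an argument. The paper avoids this entirely: it observes that $\mathcal{V}^{a}=\mathcal{V}\cap(L_{\psi}^{a})^{n}$, the space of solutions with $\psi$-algebraic entries, is $G$-stable hence $\mathcal{G}$-stable, and that $u\in\mathcal{V}^{a}$; after a $\mathbb{C}$-linear change of coordinates one passes to the block system on $\mathcal{V}^{a}$, for which the new PPV field equals its own $\psi$-algebraic subfield, i.e.\ $\psi\dim(G)=0$. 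Proposition \ref{prop:If U =00005Cpsi algebraic then G solvable} then forces solvability directly; the non-solvable alternative never has to be excluded by a separate argument. You should also note that your reduction to ``$u$ generates $M$'' is neither what the paper does nor clearly well defined over $K$, and even granting it the contradiction at the end still rests on the unproved density claim.

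Second, your treatment of the inhomogeneous first-order step $\phi(w)=aw+c$ with $c\in S$ by ``induction on the degree of $c$ in $z,z^{-1},\zeta(z,\Lambda)$'' does not work: $\phi$ preserves the $\zeta$-degree (indeed $\phi(\zeta)=q\zeta+g_{q}$), so there is no decreasing quantity, and $w\in F$ is a Laurent series with no a priori expansion in $z,\zeta$ to peel apart. This step is precisely where the paper's ``Galois acrobatics'' live: one views the $2\times2$ system over $E=Quot(S)$, uses $\psi\dim$ of the unipotent part of the $\psi$-Galois group being $0$ to get a nonzero $\mathcal{L}_{1}\in\mathbb{C}[\psi]$ with $\mathcal{L}_{1}(u'/v)\in E(v)$, converts this to $\mathcal{L}_{2}(u')\in E(v)$, solves $\phi(g)=ag+\mathcal{L}_{2}(b)$ in $E$ by Lemma 4.7 of \cite{A-D-H-W}, upgrades $g$ to $S$ by $\phi$-simplicity of $S$, and finally deduces $u'\in S_{\psi}(F/K)$ so that Theorem \ref{thm:dS2} gives $u'\in S$. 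None of this is a ``secondary, computational'' matter.

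Third, you omit the normalization, crucial for that last step, that after a gauge transformation the diagonal entries of $A$ can be taken in $\mathbb{C}^{\times}$ (item (3) of the paper's reduction Lemma, obtained from Proposition \ref{prop:order one equations}). With $a$ merely in $K^{\times}$ the identity $(v^{\psi-1})^{\phi-1}=a^{\psi-1}$ no longer yields $v^{\psi-1}\in\mathbb{C}$, and the passage from $\mathcal{L}_{1}$ to $\mathcal{L}_{2}$ and the final descent both collapse. You should add this normalization explicitly.
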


The case $n=1$ follows from Corollary \ref{prop:main theorem order one, homogenous}.
The following Lemma will be used to reduce the general case to certain
inhomogenous first order equations, albeit with coefficients outside
$K$.
\begin{lem}
Without loss of generality, we may assume:
\begin{enumerate}
\item The PPV extension $L_{\psi}$ is a field, and all its elements are
$\psi$-algebraic over $K$. Equivalently, $\psi\dim(G)=0.$
\item $\mathcal{G}$ is solvable, and the matrices $A$ and $U$ are upper
triangular.
\item The diagonal elements of $A$ are in $\mathbb{C}^{\times}.$
\item $K(u)_{\psi}\subset L_{\psi}$ and the vector $u$ is the last column
of $U$.
\end{enumerate}
\end{lem}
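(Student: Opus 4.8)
The plan is to establish the four reductions---not quite in the listed order---following the proof of the analogous lemma in \cite{A-D-H-W}. Two manipulations will be used freely. First, replacing $(\phi,\psi)$ by a pair of powers $(\phi^{r},\psi^{s})$: this keeps us in case (2Ell), with $q^{r},p^{s}$ in place of $q,p$ (still $\geq2$ and coprime), leaves $S$, $K$ and Proposition~\ref{prop:Properties of K} untouched, and preserves both the hypothesis ($u$ still solves $\phi^{r}(Y)=A_{[r]}Y$, and ``$\psi$-algebraic'' implies ``$\psi^{s}$-algebraic'') and the conclusion $u\in S^{n}$. Second, a gauge transformation $A\mapsto\phi(P)^{-1}AP$ with $P\in GL_{n}(K)$, which replaces $u$ by $P^{-1}u$ and, since $S\supseteq K$, changes neither the hypothesis nor the conclusion, and leaves $\mathcal{G}$, $G$, $L_{\psi}$ intact. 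Using the first manipulation, together with the decomposition of a PPV pseudofield into a product of fields (carried out above for first-order equations), we may assume $L_{\psi}$ is a \emph{field}. Using the inductive hypothesis of the Claim, we may assume $M$ is \emph{generated by $u$}: the smallest $\phi$-submodule $M_{0}\subseteq M$ with $u\in(M_{0})_{F}$ is all of $M$, for otherwise $\mathrm{rk}(M_{0})<n$ and the Claim, applied to $M_{0}$ in a basis of $M$ adapted to $M_{0}$, already yields $u\in S^{n}$. Finally, as in \cite{A-D-H-W} one constructs $L_{\psi}$ so as to contain $K(u)_{\psi}$ as a $(\phi,\psi)$-subfield, with $u$ lying in the solution space $\mathcal{V}=M_{L}^{\Phi}$; here one uses that $L_{\psi}^{\phi}=\mathbb{C}$ forces every solution in $L_{\psi}$ to lie already in the classical PV extension $L$. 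By the correspondence between $\phi$-submodules of $M$ and $\mathcal{G}$-subrepresentations of $\mathcal{V}$, the hypothesis ``$M$ generated by $u$'' is exactly the statement that $u$ generates $\mathcal{V}$ as a $\mathcal{G}$-module.

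The substantive reduction is~(1), that moreover $\psi\dim(G)=0$, i.e. every element of $L_{\psi}$ is $\psi$-algebraic over $K$. Equivalently, one must show the transcendence degrees $\dim\mathcal{G}\bigl(M\oplus M^{(\psi)}\oplus\cdots\oplus M^{(\psi^{k})}\bigr)$ remain bounded as $k\to\infty$; this follows, as in \cite{A-D-H-W}, from $\mathrm{tr.deg.}(K(u)_{\psi}/K)<\infty$, the fact that $\psi$ acts trivially on $\mathbb{C}$ (special to case (2Ell)), the assumption that $M$ is generated by $u$, and Ovchinnikov--Wibmer's control on the number of connected components in the construction of PPV extensions. \textbf{I expect this step to be the principal obstacle}; everything else is gauge- and flag-bookkeeping.

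Granting $\psi\dim(G)=0$, Proposition~\ref{prop:If U =00005Cpsi algebraic then G solvable} (whose proof rests on Proposition~\ref{prop:Properties of K}, Lemma~\ref{lem:connected and reduced}, and Theorem~\ref{thm:isomonodromy and solvability}) forces $\mathcal{G}$ to be connected and solvable---reduction~(2). By Lie--Kolchin we may conjugate $\mathcal{G}$ into the Borel subgroup $B$ of upper-triangular matrices (a permissible change of $U$), and then Theorem~\ref{thm:structure group and the Galois group}(ii), applicable because the conclusions of Proposition~\ref{prop:Properties of K} hold for all powers of $\phi$, provides a gauge transformation after which $A\in\mathcal{G}(K)\subseteq B(K)$ is upper triangular; taking $H=B$ in Theorem~\ref{thm:structure group and the Galois group}(i), we may then choose the fundamental matrix $U\in B(R)$ upper triangular. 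It remains to re-install~(4) and~(3).

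For~(4): write $u=Uc$ with $c\in\mathbb{C}^{n}$. Then $c_{n}\neq0$, since otherwise $u$ would lie in the proper $\mathcal{G}$-subrepresentation spanned by the first $n-1$ columns of $U$ (which is $B$-stable), contradicting that $u$ generates $\mathcal{V}$. Replacing $U$ by $UT$, where $T$ is the upper-triangular matrix whose first $n-1$ columns are $e_{1},\dots,e_{n-1}$ and whose last column is $c$ (invertible because $c_{n}\neq0$), keeps $U$ upper triangular and makes its last column equal to $u$. For~(3): the diagonal entry $a_{ii}\in K^{\times}$ is the matrix of a rank-one $\phi$-subquotient of $M$ whose solution is the diagonal entry $u_{ii}$ of $U$; as $u_{ii}\in L_{\psi}$ it is $\psi$-algebraic over $K$, so Proposition~\ref{prop:order one equations} yields $b_{i}\in K^{\times}$ and $c_{i}\in\mathbb{C}^{\times}$ with $a_{ii}=c_{i}\phi(b_{i})/b_{i}$. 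The diagonal gauge transformation by $P=\mathrm{diag}(b_{1},\dots,b_{n})$ leaves $A$ and $U$ upper triangular, keeps $u$ (now $\mathrm{diag}(b_{i}^{-1})u$) as the last column of $U$, and turns the diagonal of $A$ into $(c_{1},\dots,c_{n})\in(\mathbb{C}^{\times})^{n}$. This establishes all four reductions.
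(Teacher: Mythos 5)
Your reduction to ``$M$ generated by $u$'' via the inductive hypothesis is valid, and once it is in place, reductions (2)--(4) follow along essentially the paper's lines; your treatment of (3) is even slightly cleaner, handling all diagonal entries at once where the paper runs a further induction on rank, and your (4) uses generation directly whereas the paper truncates the system at the last nonzero coordinate of the coefficient vector $c$. The genuine gap is in (1), which you yourself flag as the principal obstacle: your sketch is not a proof. Neither $\mathrm{tr.deg.}(K(u)_{\psi}/K)<\infty$, nor ``$M$ generated by $u$'', nor triviality of $\psi$ on $\mathbb{C}$ directly bounds $\dim\mathcal{G}\bigl(M\oplus\cdots\oplus M^{(\psi^{k})}\bigr)$; and Ovchinnikov--Wibmer's control on the number of connected components pertains to the \emph{existence} of $L_{\psi}$ as a finite product of fields, not to transcendence degree.

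The missing observation, which is the crux of the paper's proof of (1), is the following. Let $L_{\psi}^{a}\subset L_{\psi}$ be the subfield of elements that are $\psi$-algebraic over $K$, and set $\mathcal{V}^{a}=\mathcal{V}\cap(L_{\psi}^{a})^{n}$, the set of solutions all of whose coordinates are $\psi$-algebraic. Then $L_{\psi}^{a}$ is stable under $\phi$, $\psi$ and the parametrized group $G$, so $\mathcal{V}^{a}$ is a $G$-stable $\mathbb{C}$-subspace of $\mathcal{V}$; since $G$ is Zariski dense in $\mathcal{G}$, it is a $\mathcal{G}$-subrepresentation. The hypothesis on $u$ says exactly that $u\in\mathcal{V}^{a}$. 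At this point your ``generated by $u$'' reduction finishes immediately: $\mathcal{V}^{a}=\mathcal{V}$, hence every $u_{ij}$ is $\psi$-algebraic, hence $L_{\psi}=L_{\psi}^{a}$ and $\psi\dim(G)=0$. (The paper proceeds a bit differently: without first assuming generation, it lets $H$ be the parabolic stabilizing $\mathcal{V}^{a}$, puts $A$ and $U$ in block upper-triangular form with $\mathcal{V}^{a}$ corresponding to the first $n_{1}$ columns, and replaces the system by the upper-left block $\phi(Y)=A_{11}Y$; since $u\in\mathcal{V}^{a}$ this truncation loses nothing. That single move effects both (1) and, in effect, your generation step.) Once you insert the $\mathcal{V}^{a}$ observation, your argument closes.
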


\begin{proof}
As before, replacing $\phi$ and $\psi$ by some powers $\phi^{r}$
and $\psi^{s}$, and $A$ by $A_{[r]}$, we may assume, \emph{not
changing $u$,} that the PPV extension $L_{\psi}$ of $M$ is a field,
$K(u)_{\psi}$ (a priori a subfield of $F$) is embedded as a $(\phi,\psi)$-subfield
of $L_{\psi}$, and that the (classical) difference Galois group $\mathcal{G}$
is connected. Let $U$ be a fundamental matrix, with entries in $L_{\psi}.$
The given vector $u$ is some linear combination of its columns.

Let $L_{\psi}^{a}$ be the subfield of $L_{\psi}$ consisting of elements
that are $\psi$-algebraic. It is stable under $\phi$ and $\psi$,
and also under $G.$

Let $\mathcal{V}=M_{L_{\psi}}^{\Phi}=U\mathbb{C}^{n}\subset L_{\psi}^{n}$
be the space of solutions, and $\mathcal{V}^{a}=\mathcal{V}\cap(L_{\psi}^{a})^{n}.$
Since $\mathcal{V}^{a}$ is fixed by $G$ and $G$ is Zariski dense
in $\mathcal{G}$, the (classical) Galois group $\mathcal{G}$ fixes
$\mathcal{V}^{a}$ as well. Let $H$ be the maximal parabolic subgroup
of $GL_{n,\mathbb{C}}$ which is the stabilizer of $\mathcal{V}^{a}$.
Since $\mathcal{G}\subset H$ we may assume that $A$ and $U$ lie
in $H.$ After a $\mathbb{C}$-linear change of coordinates we may
assume that
\[
A=\left(\begin{array}{cc}
A_{11} & A_{12}\\
0 & A_{22}
\end{array}\right),\,\,\,U=\left(\begin{array}{cc}
U_{11} & U_{12}\\
0 & U_{22}
\end{array}\right)
\]
in block form ($A_{11}$ and $U_{11}$ of size $n_{1}\times n_{1}),$
and the first $n_{1}$ columns of $U$ span $\mathcal{V}^{a}\simeq\mathbb{C}^{n_{1}}.$
Thus $u$ is a $\mathbb{C}$-linear combination of the first $n_{1}$
columns of $U$. Replacing our system of equations by the system $\phi(Y)=A_{11}Y$
we may assume that $n=n_{1},$ $A=A_{11},$ $\mathcal{V}=\mathcal{V}^{a}$
etc., hence $L_{\psi}=L_{\psi}^{a}.$ This proves (1).

By Proposition (\ref{prop:If U =00005Cpsi algebraic then G solvable})
the Galois group $\mathcal{G}$ is solvable. By the Lie-Kolchin theorem
we may assume that $\mathcal{G}$ is contained in the upper-triangular
Borel subgroup of $GL_{n,\mathbb{C}}.$ By Theorem \ref{thm:structure group and the Galois group},
so are $U$ and $A$. This proves (2).

Recall that the vector $u$ is linear combination of the columns of
$U$. If the last non-zero entry of $u$ is $u_{r}$ then we can assume,
by a further change of coordinates, not affecting the upper triangular
form, that $u$ is the $r$th column of $U$. Replacing the system
of equations by the one correspoding to the upper-left $r\times r$
block we may assume that $r=n,$ and $u$ is the last column of $U$.
This is (4).

By induction, we may assume that (3) had been proved for all the diagonal
elements of $A,$ but the first one. Since $u_{11}$ is a solution
of $\phi(y)=a_{11}y$, and is $\psi$-algebraic, Proposition \ref{prop:order one equations}
shows that there exists a $b\in K^{\times}$ such that $a_{11}/(\phi(b)/b)\in\mathbb{C}^{\times}.$
Replacing $A$ by the gauge-equivalent $\phi(P)^{-1}AP,$ where $P=\mathrm{diag}.[b,1,\dots,1],$
we get (3).
\end{proof}
From now on the proof can be concluded either by the methods of \cite{dS2},
or by the Galois theoretic ``acrobatics'' adapted from \cite{A-D-H-W}.
We chose to use the second approach.

\bigskip{}

Assume therefore that we are in the situation of the Lemma. In particular
$u_{i}=u_{in}$ is the $i$th entry in the last column of the fundamental
matrix $U$. The $u_{i}$ lie in $F$, but also in $L_{\psi}$, by
our assumption (4) that $K(u)_{\psi}\subset L_{\psi}$.

We may also assume that $E=Quot(S)$ is contained in $L_{\psi}.$
If this is not the case, augment the matrix $A$ by adding to it along
the diagonal a $2\times2$ block as in example \ref{subsec:elliptic example}.
The fundamental matrix gets augmented by the block
\[
\left(\begin{array}{cc}
z & \zeta(z,\Lambda)\\
0 & 1
\end{array}\right),
\]
hence the PPV extension for the augmented system contains $E$. If
we prove the main theorem for the augmented system, we clearly have
proved it also for the original one.

By induction we may assume that $u_{2},\dots,u_{n}\in S$. We have
to show that $u'=u_{1}\in S.$

This $u'\in F$ satisfies the equation
\[
\phi(u')=au'+b
\]
where $a=a_{11}\in\mathbb{C}^{\times},$ and $b=a_{12}u_{2n}+\cdots+a_{1n}u_{nn}\in S$
(by our induction hypothesis). Let $v=u_{11}$, so that $\phi(v)=av.$

Since $(\phi-a)(u')\in S\subset S_{\phi}(F/K)$ clearly $u'\in S_{\phi}(F/K).$
To conclude the proof we shall show, following the ideas of the proof
of Proposition 4.5 in \cite{A-D-H-W} that
\[
u'\in S_{\psi}(F/K)
\]
as well. Theorem \ref{thm:dS2} will show then that $u'\in S$, as
desired.

Consider the matrix
\[
U'=\left(\begin{array}{cc}
v & u'\\
0 & 1
\end{array}\right)\in GL_{2}(L_{\psi}).
\]
This is a fundamental matrix for the system
\[
\phi(Y)=\left(\begin{array}{cc}
a & b\\
0 & 1
\end{array}\right)Y,
\]
regarded as a system of difference equations over $E$. The field
\[
L'_{\psi}=E(v,u')_{\psi}\subset L_{\psi}
\]
is its PPV extension. Furthermore, the $\psi$-Galois group of the
last system is
\[
G'=Gal^{\psi}(L'_{\psi}/E)\subset\left\{ \left(\begin{array}{cc}
\alpha & \beta\\
0 & 1
\end{array}\right)\in GL_{2}\right\} 
\]
and its intersection with the unipotent radical (where $\alpha=1$),
denoted $G'_{u},$ corresponds via the Galois correspondence to $E(v)_{\psi}:$
\[
G'_{u}=Gal^{\psi}(L'_{\psi}/E(v)_{\psi}).
\]
This is a $\psi$-subgroup of $[\psi]\mathbb{G}_{a}$. By our assumption
that $u'$ is $\psi$-algebraic over $K$, hence clearly over $E$,
\[
\psi\dim G'_{u}=\psi\mathrm{tr.deg.}(L'_{\psi}/E(v)_{\psi})=0.
\]
It follows from Corollary A.3 of \cite{DV-H-W1} that there exists
an $0\ne\mathcal{L}_{1}\in\mathbb{C}[\psi]$ such that for any $B\in\mathrm{Alg}_{\mathbb{C}}^{\psi}$
we have
\[
G'_{u}(B)\subset\left\{ \left(\begin{array}{cc}
1 & \beta\\
0 & 1
\end{array}\right)\in GL_{2}(B)|\,\,\mathcal{L}_{1}(\beta)=0\right\} .
\]
Observe that
\[
\phi(\frac{u'}{v})=\frac{u'}{v}+\frac{b}{av},
\]
and $b/av\in E(v)_{\psi},$ so $L'_{\psi}/E(v)_{\psi}$ is a PPV extension
for $\phi(y)=y+(b/av),$ equivalently for the system
\[
\phi(Y)=\left(\begin{array}{cc}
1 & b/av\\
 & 1
\end{array}\right)Y
\]
over $E(v)_{\psi}.$ The action of $\tau\in G'_{u}(B)$ is given by
\[
\tau(\frac{u'}{v})=\frac{u'}{v}+\beta_{\tau}
\]
where $\beta_{\tau}\in B$ corresponds to the above realization of
$\tau$ as a unipotent $2\times2$ matrix. Indeed, if
\[
\tau(U')=U'\left(\begin{array}{cc}
1 & \beta_{\tau}\\
0 & 1
\end{array}\right)
\]
then $\tau(u')=v\beta_{\tau}+u'$ and $\tau(v)=v$.

It follows that for any $\tau\in G'_{u}(B)$
\[
\tau\left(\mathcal{L}_{1}(\frac{u'}{v})\otimes1\right)=\mathcal{L}_{1}(\tau(\frac{u'}{v}))\otimes1=\mathcal{L}_{1}(\frac{u'}{v}+\beta_{\tau})\otimes1=\mathcal{L}_{1}(\frac{u'}{v})\otimes1,
\]
hence $\mathcal{L}_{1}(u'/v)\in E(v)_{\psi}.$ But
\[
(v^{\psi-1})^{\phi-1}=(v^{\phi-1})^{\psi-1}=a^{\psi-1}=1,
\]
so $d=v^{\psi-1}\in\mathbb{C}$ and $\psi(v)=dv,$ $E(v)_{\psi}=E(v).$
It follows that there exists a second operator $\mathcal{L}_{2}\in\mathbb{C}[\psi],$
easily derived from $\mathcal{L}_{1},$ such that
\[
\mathcal{L}_{2}(u')\in E(v).
\]
This $\mathcal{L}_{2}(u')$ satisfies the equation
\[
\phi(y)=ay+\mathcal{L}_{2}(b)
\]
where we have used the fact that $a$ was constant, and where $\mathcal{L}_{2}(b)\in S.$
By Lemma 4.7 of \cite{A-D-H-W}, with $E$ serving as the base field
\emph{and }the intermediate field, there exists a $g\in E$ with
\[
\phi(g)=ag+\mathcal{L}_{2}(b).
\]

We are indebted to Charlotte Hardouin for pointing out the following
lemma.
\begin{lem}
In fact, $g\in S$.
\end{lem}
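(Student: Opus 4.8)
The plan is to exploit the concrete structure of the ring $S=K[z,z^{-1},\zeta]$. Since $\zeta=\zeta(z,\Lambda)$ is transcendental over $K(z)$ (by the computation in \S\ref{subsec:elliptic example}, $\mathrm{tr.deg.}(E/K)=2$), $S$ is the polynomial ring in the variable $\zeta$ over the principal ideal domain $K[z,z^{-1}]$, hence a unique factorization domain, with $E=Quot(S)$ and unit group $S^{\times}=\{\alpha z^{n}\mid\alpha\in K^{\times},\ n\in\mathbb{Z}\}$. Write $g=P/Q$ with $P,Q\in S$ coprime; the goal is to show $Q\in S^{\times}$. Applying $\phi$ to $\phi(g)=ag+h$ (where $h:=\mathcal{L}_{2}(b)\in S$) and clearing denominators gives $\phi(P)\,Q=\phi(Q)\,(aP+hQ)$. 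As $\phi$ is a ring automorphism it preserves coprimality, and since $a\in\mathbb{C}^{\times}\subset S^{\times}$ one has $\gcd(Q,aP+hQ)=\gcd(Q,P)=1$; reading off divisibilities yields $\phi(Q)\mid Q$ and $Q\mid\phi(Q)$, so $\phi(Q)=uQ$ with $u\in S^{\times}$. Consequently every power of $\phi$ sends $Q$ to a unit multiple of itself and hence permutes the finitely many prime classes dividing $Q$. If $Q\notin S^{\times}$, running around a cycle produces a prime $\pi\mid Q$ and an integer $M\ge1$ with $\phi^{M}(\pi)=w\pi$, $w\in S^{\times}$. I will rule this out, distinguishing according to $d:=\deg_{\zeta}\pi$.

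When $d=0$, after multiplying $\pi$ by a power of $z$ we may take $\pi\in K[z]$ with $\pi(0)\ne0$; inspecting behaviour at $z=0$ in $\phi^{M}(\pi)=w\pi$ forces $w\in K^{\times}$, comparing leading coefficients forces $w=q^{Md}$, and comparing constant terms gives $\phi^{M}(\pi(0))=q^{Md}\pi(0)$. But a nonzero elliptic function $c$ with $\phi^{M}(c)=\lambda c$ for a nonzero constant $\lambda$ must be constant: its divisor on $E_{\Lambda}$ is invariant under multiplication by $q$, so its support, being both $\Lambda$-periodic and discrete in $\mathbb{C}$ (no accumulation at $0$), is empty --- this is the same Bolzano--Weierstrass argument that shows $K^{\phi^{M}}=\mathbb{C}$. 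Hence $\pi(0)\in\mathbb{C}$, so $q^{Md}=1$ and $d=0$, contradicting that $\pi$ is a non-unit prime.

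When $d\ge1$, factor $\pi=c_{d}\prod_{i=1}^{d}(\zeta-\theta_{i})$ over $\overline{K(z)}$, each $\theta_{i}$ algebraic over $K(z)$, and fix an extension of $\phi$ to an automorphism $\tau$ of $\overline{E}$. Because $\tau(\zeta)=q\zeta+g_{q}$ with $g_{q}\in K$, there is a constant $c_{[M]}\in K$ with $\tau^{M}(\zeta)=q^{M}\zeta+c_{[M]}$, and, comparing roots in $\phi^{M}(\pi)=\tau^{M}(\pi)=w\pi$, the map $\theta\mapsto(\tau^{M}(\theta)-c_{[M]})/q^{M}$ permutes $\{\theta_{1},\dots,\theta_{d}\}$. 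Raising to the order of this permutation gives an integer $N\ge1$ with $\tau^{N}(\theta_{1})=q^{N}\theta_{1}+c_{[N]}$ for the same constant $c_{[N]}$ appearing in $\tau^{N}(\zeta)=q^{N}\zeta+c_{[N]}$; hence $\delta:=\theta_{1}-\zeta$ satisfies $\tau^{N}(\delta)=q^{N}\delta$. Since also $\tau^{N}(z)=q^{N}z$, the ratio $\delta/z$ lies in $\overline{E}^{\,\tau^{N}}$, and a minimal-polynomial argument together with $E\subset F=\mathbb{C}((z))$ and the trivial fact $F^{\phi^{N}}=\mathbb{C}$ shows $\overline{E}^{\,\tau^{N}}=\mathbb{C}$. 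Thus $\theta_{1}=\zeta+\lambda z$ for some $\lambda\in\mathbb{C}$; but $\zeta+\lambda z$ is transcendental over $K(z)$ while $\theta_{1}$ is algebraic over $K(z)$ --- a contradiction. Therefore $Q\in S^{\times}$ and $g\in S$.

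The main difficulty is that $S$ has Krull dimension $2$, so there is no single valuation whose finiteness detects membership in $S$; the device that circumvents this is to push the ``eigen-divisibility'' $\phi(Q)=uQ$ down to one prime $\pi$ and, in the case $d\ge1$, to replace the functional equation for a root $\theta_{i}\in\overline{K(z)}$ by one for $\theta_{i}-\zeta$, which is then pinned down to be a constant multiple of $z$ --- impossible for transcendence reasons. The points that need care are the two descriptions of the $\phi^{M}$-action on $\pi$ (acting on $S$ directly versus twisting the $K(z)$-coefficients and then substituting $\zeta\mapsto q^{M}\zeta+c_{[M]}$), the bookkeeping of the additive constants $c_{[M]}$ on passing to powers of $\phi$, and the identity $\overline{E}^{\,\tau^{N}}=\mathbb{C}$.
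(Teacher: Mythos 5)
Your proof is correct, but it takes a genuinely different and considerably longer route than the paper's. The paper observes that the ideal of denominators $I=\{s\in S\mid sg\in S\}$ is a nonzero $\phi$-stable ideal of $S$, and then simply quotes that $S$ is a $\phi$-simple ring (it is, up to localization at $z$, the Picard--Vessiot ring of Example \ref{subsec:elliptic example}), so $I=S$ and $g\in S$ in two lines. You avoid appealing to $\phi$-simplicity of PV rings and instead argue directly in the UFD $S=K[z,z^{-1}][\zeta]$: write $g=P/Q$ in lowest terms, deduce $\phi(Q)=uQ$ for a unit $u$, and then rule out the existence of a prime $\pi\mid Q$ with $\phi^{M}(\pi)=w\pi$ by a two-case analysis on $d=\deg_{\zeta}\pi$ --- using normalization, comparison of $z$-orders, and $K^{\phi^{M}}=\mathbb{C}$ when $d=0$, and an extension of $\phi$ to an algebraic closure $\overline{E}$, the bookkeeping of the affine action $\zeta\mapsto q^{N}\zeta+c_{[N]}$, the fact that $\overline{E}^{\tau^{N}}=\mathbb{C}$ (via $E\subset F$ and a minimal-polynomial argument), and finally the transcendence of $\zeta$ over $K(z)$, when $d\ge1$. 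In effect you re-prove, by hand and for the principal ideals you actually need, the fragment of $\phi$-simplicity of $S$ that the paper absorbs into one word; the gain is a more elementary and self-contained argument, the cost is the length and the algebraic-closure machinery. Both proofs are valid, and your verification that $\phi(Q)=uQ$, the Bolzano--Weierstrass argument for $K^{\phi^{M}}=\mathbb{C}$, and the identity $\overline{E}^{\tau^{N}}=\mathbb{C}$ all check out.
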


\begin{proof}
Let $I=\{s\in S|\,sg\in S\}$ be the ideal of denominators of $g.$
If $s\in I$ then $\phi(s)\in I$ because
\[
\phi(s)g=a^{-1}(\phi(sg)-\phi(s)\mathcal{L}_{2}(b))\in S.
\]
Since $S$ is a simple $\phi$-ring (it is the localization at $z$
of the PV ring $K[z,\zeta(z,\Lambda)]$ associated to the system considered
in Example \ref{subsec:elliptic example}), we must have $1\in I,$
so $g\in S.$
\end{proof}
It follows that $\phi(\mathcal{L}_{2}(u')-g)=a(\mathcal{L}_{2}(u')-g).$
Since also $\phi(v)=av,$ the element $d'=(\mathcal{L}_{2}(u')-g)/v$
is fixed by $\phi,$ hence lies in $\mathbb{C}$, and
\[
\mathcal{L}_{2}(u')=d'v+g.
\]
Since $(\psi-d)d'v=d'(\psi-d)v=0$
\[
(\psi-d)\circ\mathcal{L}_{2}(u')=(\psi-d)(g)\in S.
\]
As ($\psi-d)\circ\mathcal{L}_{2}\in\mathbb{C}[\psi]$ and any element
of $S$ is annihilated by a non-trivial operator from $K[\psi],$
we deduce that $u'\in S_{\psi}(F/K),$ and the proof is complete.

\bigskip{}

\emph{Acknowledgements. }The author would like to thank Charlotte
Hardouin and Thomas Dreyfus for fruitful discussions regarding this
work. He would also like to thank Zev Rosengarten and the participants
of the Kazhdan Seminar at the Hebrew University for carefully and
critically attending his lectures on the subject.

\end{document}